\newtheorem{theorem}{Theorem}[section]
\newtheorem{lemma}[theorem]{Lemma}
\newtheorem{prop}[theorem]{Proposition}
\newtheorem*{remark}{Remarks}
\theoremstyle{definition}
\newtheorem{defn}[theorem]{Definition}
\numberwithin{equation}{section}
\title{Propagation of polarization sets for systems of MHD type}
\author{Rayhana Darwich}
\date{}
\begin{document}
	
	\maketitle
	\begin{abstract}
		Polarization sets were introduced by Dencker (1982) as a refinement of wavefront sets to the vector-valued case. He also clarified the propagation of polarization sets when the characteristic variety of the pseudodifferential system under study consists of two hypersurfaces intersecting tangentially (1992), or transversally (1995). In this paper, we consider the case of more than two intersecting characteristic hypersurfaces that are interesting transversally (and we give a note on the tangential case).
		
		Mainly, we consider two types of systems which we name "systems of generalized transverse type" and "systems of MHD type", and we show that we can get a result for the propagation of polarization set similar to Dencker's result for systems of transverse type. Furthermore, we give an application to the MHD equations.
	\end{abstract}
	\section{Introduction}\label{Introduction}
	In \cite{hormander1}, H\"{o}rmander defined the wavefront set of a distribution $u$, denoted by $\operatorname{WF}(u)$, which is a refinement of the singular support of a distribution. The wavefront sets does not only show the location of singularity, but also the direction in which the singularity occurs. Concerning the propagation of the wavefront sets, many results were given. For example, in \cite{hormander1} H\"{o}rmander gave the result for the propagation of the wavefront set for the solutions of partial differential equations, when considering the partial differential operator to be of real principal type, where he stated that the wavefront set is invariant under the bicharacteristic flow. Moreover, in \cite{denckerpropagationofsing}, Dencker studied the propagation of singularities for pseudodifferential operator $P$ on a smooth manifold $X$, having
	characteristics of variable multiplicity. He considered the characteristic set 
	to be union of hypersurfaces $S_j$, $j=1,...,r_0$ tangent at $\cap_{j=1}^{r_0} S_j$. Under some assumptions he proved that the wavefront set of the solution of the considered pseudodifferential operator, is invariant under the union of the Hamilton flows on $S_j$, $j=1,...,r_0$, given that $Pu$ is smooth on $X$. 
	
	In \cite{hormander1}, H\"{o}rmander defined locally the wavefront set of distributional sections $u\in \mathcal D'(X;E)$, where $E\rightarrow X$ is a vector bundle over the smooth manifold $X$. He defined the wavefront set of $u$ locally as $\bigcup \operatorname{WF}(u_j)$ where $(u_1,...,u_N)$ are the components of $u$ with respect to a local trivialization of $E$.  However, this definition does not specify in which components $u$ is singular, that is why Dencker defined in \cite{denckerprincipaltype} the polarization set for vector-valued distribution $u$ which we will denote by $\operatorname{Pol}(u)$. The polarization set also shows the location and the direction of the singularity as the wavefront set, but it additionally shows the most singular components of a distribution. Hence, the polarization set of a distribution is a refinement of the wavefront set, and the projection of $\operatorname{Pol}(u)\setminus 0$ on the cotangent bundle $T^*X$ gives the wavefront set of $u$. Similarly, the $H^s$-polarization set is defined as a refinement of the $H^s$-wavefront set, where $H^s$ denotes the usual Sobolev space.
	
	In \cite{denckerprincipaltype}, Dencker defined systems of pseudodifferential operators of real principal type; note that the definition of systems of pseudodifferential operators of real principal type differs from the case of scalar pseudodifferential operators of real principal type, and he defined Hamilton orbits for systems of real principal type which are certain line bundles, and then he proved that the polarization set of a solution $u$ of systems of real principal type $P$ will be union of Hamilton orbits, given that $Pu$ is smooth. In \cite{Gerardsobolev}, G\'{e}rard pointed out that the above result also holds for $H^s$-polarization sets. 
	
	Moreover, in \cite{denckerdoublerefraction}, Dencker considered pseudodifferential system having its characteristic set is union of two non-radial hypersurfaces intersecting tangentially at an involutive manifold of exactly order $k_0\geq 1$. He also assumed that the principal symbol vanishes of first order on the two-dimensional kernel at the intersection, and he assumed a Levi type of condition. Then, he defined systems satisfying these conditions to be systems of  uniaxial type. Outside the intersection of the hypersurfaces the system will be of real principal type, hence the propagation result of the polarization set is already known there. In this article, Dencker has also proved a propagation result of the polarization set at the intersection. In \cite{denckertransversaltype}, Dencker considered pseudodifferential system having its characteristic set is union of two non-radial hypersurfaces intersecting transversally at an involutive manifold of codimension $2$. He also assumed that the principal symbol vanishes of first order on the two-dimensional kernel at the intersection. Systems satisfying these conditions are systems of transverse type. In this article, Dencker has proved a propagation result of the polarization set at the intersection. Outside the intersection the system is of real principal type.
	
	We worked on extending Dencker's result stated  above to pseudodifferential systems having their characteristic sets is union of several non-radial hypersurfaces intersecting transversally at an involutive manifold of codimension $d_0\geq 2$; not necessary just two hypersurfaces as in the case of systems of transverse type and systems of uniaxial type. Note that even if we assumed that the hypersurfaces are intersecting tangentially of exactly order $k_0\geq 1$ instead of intersecting transversally, we get a similar result, and for the proof we use the same weight and metric introduced by Dencker in \cite{denckerdoublerefraction} for the symbol classes $S(\vartheta,g)$ of the Weyl calculus. We have considered two cases for that: the first case is the case where we have $r_0$ hypersurfaces, and we assumed that the $r_0$th-differential of the determinant of the principal symbol is different than zero at the intersection, and the $i$th-differential of the determinant of the principal symbol vanishes at the intersection for $i<r_0$. Moreover, we assumed that the dimension of the kernel of the principal symbol to be $r_0$ at the intersection, and we assumed a condition similar to the Levi type condition considered in \cite{denckerdoublerefraction}. We called systems satisfying the above conditions systems of generalized transverse type, and we proved that we have a similar propagation result of the polarization set as that for systems of transverse type. The second case, is the case where we also have $r_0$ hypersurfaces, and a condition similar to the Levi type condition considered in \cite{denckerdoublerefraction}, but here we assumed that the $(r_0+1)$th-differential of the determinant of the principal symbol is different than zero at the intersection, and the $i$th-differential of the determinant of the principal symbol vanishes at the intersection for $i<r_0+1$. Moreover, we assumed that the dimension of the kernel of the principal symbol to be $r_0+1$ at the intersection. We also assumed some additional conditions that we did not assume in the case of systems of generalized transverse type. We defined systems satisfying these conditions to be systems of MHD type. We named them systems of MHD type because we have first noticed such systems when we considered the linearized ideal MHD equations. Thus we will have a section in which we study the propagation of polarization sets for the linearized ideal MHD equations. 
	
	In our work, we will assume that we have $P\in\Psi^m_{phg}(X)$ an $N\times N$ system of pseudodifferential operators on a smooth manifold $X$ of order $m$. Let $p=\sigma(P)$ be the principal symbol, $\det p$ the determinant of $p$, and $\Sigma=(\det p)^{-1}(0)$ the characteristics of $P$. We consider $\Sigma$ to be union of several non-radial hypersurfaces intersecting transversally at an involutive manifold $\Sigma_2$.
	Now, we state our main theorem in this paper regarding the propagation of polarization sets for systems of generalized transverse type, and systems of MHD type, but its proof will be postponed to Sections $\ref{section Propagation of polarization sets for systems of generalized uniaxial type}$, and $\ref{section propagation of polarization sets for systems of MHD type}$ to prove it for systems of generalized transverse type, and systems of MHD type respectively. 
	Let 
	\begin{align}
	r^*_u(\nu)=\sup \{ r\in\mathbb R: u\in H^r\ \text{at}\ \nu\} \ \ \ \ \nu\in T^*X\setminus 0
	\end{align}
	be the regularity function.
	\begin{theorem}\label{main theorem}
		Let $P\in\Psi ^{m}_{phg}$ be an $N\times N$ system of generalized transverse type (or of MHD type) at $\nu_0\in \Sigma_2$, and let $A\in \Psi^0_{phg}$ be an $N\times N$ system such that the dimension of $\mathcal N_A\cap\mathcal N_{P}$ is equal to $1$ at $\nu_0$, and $M_A=\pi_1(\mathcal N_A\cap\mathcal N_{P}\setminus 0)$ is a hypersurface near $\nu_0$,  where $\pi_1:T^*X\times\mathbb C^N\rightarrow T^*X$ is the projection along the fibers. Assume that $u\in \mathcal D'(X,\mathbb C^{N})$ satisfies $\min (r^*_{Pu}+m-1,r^*_{Au})>r$ at $\nu_0$. Then, $\operatorname{Pol}^r(u)$ is a union of $\mathcal C^\infty$ line bundles in $\mathcal N_A\cap\mathcal N_{P}$ over bicharacteristics of $M_A=\pi_1(\mathcal N_A\cap\mathcal N_{P}\setminus 0)$ near $\nu_0$.
	\end{theorem}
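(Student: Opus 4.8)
The plan is to adapt to the several‑sheet situation the scheme Dencker introduced for systems of transverse type in \cite{denckertransversaltype} (and of uniaxial type in \cite{denckerdoublerefraction}). Away from $\Sigma_2$ the system is of real principal type, so there $\operatorname{Pol}^r(u)$ is already known to be a union of Hamilton orbits by \cite{denckerprincipaltype} (in the $H^r$ form of \cite{Gerardsobolev}); hence the whole content is the behaviour at $\nu_0\in\Sigma_2$, and the statement near $\nu_0$ will follow by patching from the local assertion there. The local argument has three parts: (i) confining $\operatorname{Pol}^r(u)$ to the line bundle $\mathcal N_A\cap\mathcal N_{P}$; (ii) reducing $P$ microlocally to a scalar model attached to the sheet $M_A$; and (iii) a positive‑commutator estimate giving invariance along bicharacteristics of $M_A$.

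Step (i) uses only the hypotheses on $Pu$ and $Au$ together with the elementary functoriality of the $H^r$‑polarization set: if $B\in\Psi^0_{phg}$ is an $N\times N$ system and $Bu\in H^r$ microlocally near $\nu$, then $\operatorname{Pol}^r(u)$ lies in the kernel bundle of $\sigma(B)$ near $\nu$ (apply the scalar statement of \cite{denckerprincipaltype,Gerardsobolev} to each row of $B$). Since $r^*_{Pu}+m-1>r$, we have $Pu\in H^{r-m+1}$ microlocally near $\nu_0$, so with a scalar elliptic $\Lambda$ of order $1$ the operator $\Lambda^{-m}P$ is of order $0$, satisfies $\Lambda^{-m}Pu\in H^{r+1}$, and has symbol with kernel $\mathcal N_{P}$; hence $\operatorname{Pol}^r(u)\subseteq\mathcal N_{P}$ near $\nu_0$. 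Since $r^*_{Au}>r$, the same functoriality applied to $A$ gives $\operatorname{Pol}^r(u)\subseteq\mathcal N_A$ near $\nu_0$. Thus $\operatorname{Pol}^r(u)\subseteq\mathcal N_A\cap\mathcal N_{P}$, a $\mathcal C^\infty$ line bundle off the zero section over the non‑radial hypersurface $M_A$, whose bicharacteristics are therefore well defined; it remains to show invariance of $\operatorname{Pol}^r(u)$ under that flow.

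For (ii)–(iii), I would exploit the defining properties of a system of generalized transverse type (resp.\ of MHD type) at $\nu_0$ — the orders of vanishing of the successive differentials of $\det p$ at $\Sigma_2$, the dimension $r_0$ (resp.\ $r_0+1$) of $\ker p$ there, and the Levi‑type condition (plus the extra conditions in the MHD case). Conjugating $P$ by elliptic $N\times N$ systems and cutting by $A$, I would bring the block of $P$ acting in the $\mathcal N_A\cap\mathcal N_{P}$ direction to a scalar operator $Q$ of real principal type with characteristic $M_A$, modulo operators of order low enough not to affect $H^r$‑propagation in that direction and with the coupling to the directions of the remaining sheets under control. Then, along a bicharacteristic $\gamma\subseteq M_A$ through a prospective point of $\operatorname{Pol}^r(u)$, I would test the equation against $B^{*}Bu$ for $B\in\Psi^0_{phg}$ microlocally concentrated near $\gamma$ and chosen so that $\operatorname{Re}\,i[Q,B^{*}B]\ge 0$ modulo negligible or absorbable terms — near $\Sigma_2$ using the weight $\vartheta$, metric $g$ and calculus $S(\vartheta,g)$ of \cite{denckerdoublerefraction} to make the construction and estimates go through — and deduce that if the fibre of $\mathcal N_A\cap\mathcal N_{P}$ over one point of $\gamma$ avoids $\operatorname{Pol}^r(u)$ then so does every fibre over $\gamma$. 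Since $\mathcal N_A\cap\mathcal N_{P}$ is one‑dimensional, over each bicharacteristic $\gamma$ of $M_A$ the closed conic set $\operatorname{Pol}^r(u)$ is either empty or all of $(\mathcal N_A\cap\mathcal N_{P})|_\gamma$, uniformly in $\gamma$; so $\operatorname{Pol}^r(u)$ is exactly the union of the $\mathcal C^\infty$ line bundles $(\mathcal N_A\cap\mathcal N_{P})|_\gamma$ over the bicharacteristics of $M_A$ that it meets, which is the claim.

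The main obstacle is steps (ii)–(iii) at $\Sigma_2$. For two sheets, as in \cite{denckertransversaltype,denckerdoublerefraction}, the block attached to $M_A$ decouples cleanly and one is reduced to an essentially scalar propagation; with $r_0\ge 3$ sheets the decoupling is no longer automatic, and one must use the Levi‑type hypothesis — together with the assumption $r^*_{Au}>r$, which is precisely what pins $\operatorname{Pol}^r(u)$ to the single line bundle $\mathcal N_A\cap\mathcal N_{P}$ — to show that the residual coupling of $M_A$ to the other sheets does not destroy the positivity of the commutator. Setting up the microlocal normal form and verifying the symbol‑calculus estimates in this several‑sheet setting, uniformly up to $\Sigma_2$, is the technical heart of the proof; this is what will be carried out in Sections \ref{section Propagation of polarization sets for systems of generalized uniaxial type} and \ref{section propagation of polarization sets for systems of MHD type}.
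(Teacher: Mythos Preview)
Your Step~(i) is fine and matches the paper. The gap is in Steps~(ii)--(iii). You propose to decouple a scalar block ``of real principal type with characteristic $M_A$'' and then run a positive--commutator argument along bicharacteristics of $M_A$. Near $\Sigma_2$ this decoupling is not available: the $r_0$ sheets coalesce, the eigenprojections of $\sigma(P)$ blow up in $S^0$, and there is no scalar operator with characteristic set equal to the single hypersurface $M_A$ modulo genuinely lower--order terms. The Levi--type condition does not repair this; what it buys (Propositions~\ref{prop1 generalized} and~\ref{MHD vs weyl symbol}) is only that after the normal form $P=\operatorname{Id}D_t+K$ one has $K\in\mathcal C^\infty(\mathbb R,\operatorname{Op}S(\vartheta,g))$, i.e.\ the off--diagonal coupling is $O(\vartheta)$, not lower order.

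The paper's route is different and is the missing idea. After the normal form and after conjugating so that $Au\cong{}^t(u_1,\dots,u_{r_0-1},0)$, one does \emph{not} try to isolate $M_A$. Instead one multiplies $P$ by its adjugate ${}^tP^{\operatorname{co}}$ (in the MHD case by $L=L_1+L_2$ coming from the structural hypothesis \eqref{adjugate matrix}--\eqref{principal symbol of L1}); the resulting $(r_0,r_0)$ entry $q_{r_0r_0}$ is a \emph{scalar} operator whose principal part is the full product $\tau\prod_{j\ge2}(\tau+\beta_j)$, with characteristic set all of $\Sigma$, and $q_{r_0r_0}u_{r_0}\in H^{\epsilon,-r_0+1}$. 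One then treats $q_{r_0r_0}u_{r_0}=f$ as a Cauchy problem in $t$: rewrite it as a first--order $r_0\times r_0$ system, symmetrize with $M=\sum\pi_j^*\pi_j$ (diagonalizable in $S(1,g)$), and obtain an energy inequality (Lemma~\ref{lemma 5.2}) plus parametrix/microlocal--uniqueness input from \cite{denckerpropagationofsing} (Proposition~\ref{regularity proposition}). Together with the anisotropic spaces $H^{r,s}$, $H_*^{r,s}$ and Lemma~\ref{lemma 3.9 uniaxial}, this propagates $u_{r_0}\in H^0$ along $\Sigma_2$. So the mechanism is evolution/energy, not a positive commutator, and the scalar reduction is to an operator carrying \emph{all} sheets, not just $M_A$; your outline would need to be reworked along these lines.
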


	The plan of this paper is as follows: in Section $\ref{Previous results}$, we mention previous results on the propagation of polarization sets. More precisely, we will state Dencker's propagation result for systems of real principal type which was proven in \cite{denckerprincipaltype}, and Dencker's propagation result for systems of uniaxial type that was proven in \cite{denckerdoublerefraction}.  Also, we state Dencker's result for the propagation of polarization sets for systems of transverse type; see \cite{denckertransversaltype}. Note that in \cite{denckerdoublerefraction} and \cite{denckertransversaltype}, Dencker proved several results for the propagation of polarization sets under different conditions. Here we just mention the result which is similar to the result in our main theorem. In Sections $\ref{section Propagation of polarization sets for systems of generalized uniaxial type}$ and $\ref{section propagation of polarization sets for systems of MHD type}$, we define systems of generalized transverse type, and systems of MHD type, and we prove Theorem $\ref{main theorem}$ for both types of systems. In Section $\ref{section application}$, we give an application for the results in \cite{hansenlagrangiansolutions}, and for the propagation of polarization sets for systems of MHD type, so we divide it into two subsections. First, we give the set of equations describing the ideal MHD, and we linearize it. In Section $\ref{section application transport eq}$, we write the linearized ideal MHD equations in the form of a wave equation, and we give the characteristic variety of this wave equation under some assumptions which was calculated in \cite{lecturesmhd}. Then, we calculate the transport equation under these assumptions as an application to Hansen's and R\"{o}hrig's results in \cite{hansenlagrangiansolutions}. In Section $\ref{section application MHD type system}$, we return to the linearized ideal MHD equations, and we calculate the eigenvalues and their multiplicities which are not constant. Then, we study the propagation of polarization sets, where we observe different cases, some in which our system is of real principal type, some in which our system is of uniaxial type, and one where our system is of MHD type.\\~~\\
 \textit{Acknowledgments.}
 This work was part of my PhD thesis at Georg-August University, G\"{o}ttingen, supervised by Prof. Dr. Ingo Witt, and involved with the RTG 2491, check \cite{rayhanaphdthesis}. My PhD studies was funded by a DAAD scholarship.
	\section{Previous results}\label{Previous results}
 In this section, we state some previous results regarding the propagation of polarization sets. More precisely, we state the results for systems of real principal type, systems of uniaxial type, and systems of transverse type proven in \cite{denckerprincipaltype}, \cite{denckerdoublerefraction}, and \cite{denckertransversaltype} respectively. First, we will state the definition of polarization sets given by Dencker in \cite{denckerprincipaltype}.
 \begin{defn}
		For $u\in \mathcal D'(X,\mathbb C^N)$, the polarization set of $u$ is given by 
		\begin{align}
		\operatorname{Pol}(u)=\bigcap \mathcal N_B\subseteq (T^*X\setminus 0)\times \mathbb C^N,
		\end{align}
		where $\mathcal N_B=\ker \sigma(B)$, and the intersection is taken over those $1\times N$ systems $B\in \Psi^0_{phg}$ such that $Bu\in \mathcal C^\infty$.
	\end{defn}
 The $H^s$-polarization set, where $H^s$ is the usual Sobolev space is defined similarly.
	\begin{defn}
		For $u\in \mathcal D'(X,\mathbb C^N)$, the $H^s$-polarization set is given by 
		\begin{align}\label{Hs-polarization set}
		\operatorname{Pol}^s(u)=\bigcap \mathcal N_B\subseteq (T^*X\setminus 0)\times \mathbb C^N,
		\end{align}
		where $\mathcal N_B=\ker \sigma(B)$, and the intersection is taken over those $1\times N$ systems $B\in \Psi^0_{phg}$ such that $Bu\in H^s$.
	\end{defn}
 \subsection{Systems of real principal type}
	First, we want to state Dencker's result regarding the propagation of polarization sets for systems of real principal type. For the definition of real principal type, we will differentiate between two cases, the scalar case, and the case of system of pseudodifferential operators. 
 
 \begin{defn}\label{def real principal type scalar}
	We say that $P\in \Psi^m(X)$ is of real principal type if the principal symbol $\sigma(P)=p$ is real and the Hamilton field $H_{p}=\sum\partial_{\xi_j}p\partial_{x_j}-\partial_{x_j}p\partial_{\xi_j}$ is non-vanishing, and does not have the radial direction when $p=0$.
\end{defn}
\begin{defn}[Case of system of pseudodifferential operators]\label{def systems of real principal type}
	An $N\times N$ system $P$ of pseudodifferential operators on $X$ with principal symbol $p(x,\xi)$ is of real principal type at $(y,\eta)\in T^*X\setminus 0$ if there exists an $N\times N$ symbol $\tilde{p}(x,\xi)$ such that 
	\begin{align*}
	\tilde{p}(x,\xi)p(x,\xi)=q(x,\xi)\cdot\operatorname{Id}_N
	\end{align*}
	in a neighborhood of $(y,\eta)$ where $q(x,\xi)$ is a scalar symbol of real principal type and $\operatorname{Id}_N$ is the identity in $\mathbb C^N$.
\end{defn}
 Assume $P(x,D)$ to be an $N\times N$ system of classical pseudodifferential operators on an $n$-dimensional smooth manifold $X$ of order $m$. The symbol of $P$ is an asymptotic sum of homogeneous terms: $p(x,\xi)+p_{m-1}(x,\xi)+p_{m-2}(x,\xi)+...$ where $p$ is the principal symbol of $P$ and $p_j$ is homogeneous of degree $j$. Assume $P$ to be of real principal type, and let 
\begin{align}
\Sigma=\{(x,\xi): \det p(x,\xi)=0\}
\end{align}
be the characteristic set of $P$.
To state the result of the propagation of polarization set given by Dencker in \cite{denckerprincipaltype}, we have to introduce first the connection he defined, and give the definition of the Hamilton orbit. Let
\begin{align}
D_p w=H_q w+\frac{1}{2}\{\tilde{p},p\} w+i\tilde{p}p^s_{m-1} w,
\end{align}
where $w$ is $\mathcal C^\infty$ function on $T^*X\setminus 0$ with values in $\mathbb C^N$, $\{,\}$ is the Poisson bracket, that is $\{\tilde{p},p\}=H_{\tilde{p}}p$, and $p^s_{m-1}$ is the subprincipal of $P$ defined by 
\begin{align}
p^s_{m-1}=p_{m-1}-(2i)^{-1}\sum \partial _{x_j}\partial_{\xi_j} p.
\end{align}
$D_P$ is a connection on $\mathcal N_P$ over $\Sigma$, that is, if $w\in\ker p$ at one point of a bicharacteristic of $\Sigma$, then $D_P w\in\ker p$ along the bicharacteristic if and only if $w\in\ker p$ there. Hence, each parallel section (that is $w$ such that $D_P w=0$) is uniquely determined by one point. $D_P$ depends on the choice of $\tilde{p}$ and $q$, however Dencker showed that different choices of $\tilde{p}$ and $q$ only change the solution of $D_P w=0$ in $\mathcal N_P$ by a scalar factor. This motivated him to define the following. 
\begin{defn}[Hamilton Orbit]
	A Hamilton orbit of a system $P$ of real principal type is a line bundle $L\subseteq \mathcal N_P|_{\gamma}$, where $\gamma$ is an integral curve of the Hamilton field of $\Sigma$, and $L$ is spanned by $\mathcal C^\infty$ section $w$ satisfying $D_p w=0$.	
\end{defn}
\begin{theorem}[Dencker's propagation result]
	Let $P$ be an $N\times N$ system of pseudodifferential operators on a manifold $X$ and let $u\in\mathcal D'(X,\mathbb C^N)$. Assume that $P$ is of real principal type at $(y,\eta)\in\Sigma$, and that $(y,\eta)\notin \operatorname{WF}(Pu)$. Then, over a neighborhood of $(y,\eta)$ in $\Sigma$, $\operatorname{Pol}(u)$ is a union of Hamilton orbits of $P$.
\end{theorem}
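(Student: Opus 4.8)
The plan is to establish the statement in two stages: first that, microlocally near $(y,\eta)$, the set $\operatorname{Pol}(u)$ lies inside $\mathcal N_P$ and its base points fill out a union of bicharacteristics of $\Sigma$; and second that, along each such bicharacteristic, $\operatorname{Pol}(u)\cap\mathcal N_P$ is invariant under the connection $D_P$, hence is swept out by $D_P$-parallel sections, i.e. by Hamilton orbits of $P$.

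\emph{Stage 1.} Off $\Sigma$ the symbol $p$ is invertible, so $P$ is microlocally elliptic there and $(y,\eta)\notin\operatorname{WF}(Pu)$ forces $u\in\mathcal C^\infty$ microlocally; thus $\operatorname{Pol}(u)=\emptyset$ off $\Sigma$. On $\Sigma$ near $(y,\eta)$, applying each of the $N$ rows of $P$ to $u$ gives, after a microlocal cutoff, functions in $\mathcal C^\infty$, so $\operatorname{Pol}(u)\subseteq\bigcap_k\ker\sigma(P_{k\cdot})=\ker p=\mathcal N_P$; the real-principal-type hypothesis is what makes $\mathcal N_P$ a genuine $\mathcal C^\infty$ vector bundle over $\Sigma$ there. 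That the base points of $\operatorname{Pol}(u)$ form a union of bicharacteristics I would get by passing to the microlocal normal form available for systems of real principal type: conjugating $P$ by elliptic systems (and, if convenient, applying a homogeneous canonical transformation) reduces it, modulo lower order terms, to a block with a single scalar real-principal-type factor $Q$ and an elliptic complement. Using $\tilde p\,p=q\operatorname{Id}_N$ with $q$ scalar of real principal type, $\Sigma=q^{-1}(0)$ is a smooth hypersurface near $(y,\eta)$; H\"ormander's theorem on propagation of singularities for $Q$ (see \cite{hormander1}), together with the elliptic block controlling the remaining components, shows that $\operatorname{WF}(u)$, which is the projection of $\operatorname{Pol}(u)\setminus 0$, is invariant under the Hamilton field of $\Sigma$, i.e. is a union of integral curves $\gamma$ of that field near $(y,\eta)$.

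\emph{Stage 2.} Fix such a bicharacteristic $\gamma$ through $(y,\eta)$. By the properties of $D_P$ recalled above — a $D_P$-parallel section of $\mathcal N_P$ along $\gamma$ is determined by its value at one point, and changing the admissible choices of $\tilde p,q$ only rescales it — it suffices to prove the following propagation of non-membership: if $\nu_0,\nu_1\in\gamma$ are close to $(y,\eta)$, if $w$ is the $D_P$-parallel section along $\gamma$ with $w(\nu_1)=w_1\neq 0$, and if $(\nu_1,w_1)\notin\operatorname{Pol}(u)$, then $(\nu_0,w(\nu_0))\notin\operatorname{Pol}(u)$. Since each fibre of $\operatorname{Pol}(u)$ is a linear subspace of $\mathbb C^N$, applying this with $\nu_0$ and $\nu_1$ interchanged then shows that parallel transport preserves membership, so $\operatorname{Pol}(u)\cap\mathcal N_P$ over $\gamma$ is a $D_P$-invariant subbundle, hence a union of Hamilton orbits. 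To prove the propagation, pick a $1\times N$ system $B_1\in\Psi^0_{phg}$ with $B_1u\in\mathcal C^\infty$ microlocally near $\nu_1$ and $\sigma(B_1)(\nu_1)w_1\neq 0$, and build $B\in\Psi^0_{phg}$, $1\times N$, with $Bu\in\mathcal C^\infty$ microlocally near $\nu_0$ and $\sigma(B)(\nu_0)w(\nu_0)\neq 0$, as follows: transport the covector $\sigma(B_1)(\nu_1)$ along $\gamma$ by the connection on $\mathcal N_P^{*}$ dual to $D_P$, obtaining degree-$0$ row symbols $b(\nu)$ with $b(\nu)w(\nu)$ constant along $\gamma$; extend $b$ to a full symbol in a conic neighborhood of the segment of $\gamma$ between $\nu_0$ and $\nu_1$ and quantize it to $B$; then deduce $Bu\in\mathcal C^\infty$ near $\nu_0$ from a microlocal energy (Gr\"onwall) estimate along $\gamma$, which is licit because $Pu$ is $\mathcal C^\infty$ microlocally along the whole segment (it lies in a small neighborhood of $(y,\eta)\notin\operatorname{WF}(Pu)$), $B_1u$ is $\mathcal C^\infty$ near $\nu_1$, and $B$ was manufactured so that it satisfies, modulo smoothing near the segment, a first-order evolution equation along $\gamma$ whose right-hand side is controlled by $B_1u$ and $Pu$. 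Then $\sigma(B)(\nu_0)w(\nu_0)=\sigma(B_1)(\nu_1)w_1\neq 0$ and $\operatorname{Pol}(u)\subseteq\mathcal N_B$ give $(\nu_0,w(\nu_0))\notin\operatorname{Pol}(u)$.

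The step I expect to be the main obstacle is the energy estimate in Stage 2, i.e. checking that the transport equation which $\sigma(B)$ must satisfy in order that $Bu$ stay smooth is \emph{exactly} the one dual to $D_P$, with the correct subprincipal contributions $\tfrac{1}{2}\{\tilde p,p\}$ and $\tilde p\,p^s_{m-1}$, so that in the energy identity every error term drops one order and can be absorbed by Gr\"onwall. This is essentially the same computation that justifies the (already recalled) claim that $D_P$ is a well-defined connection, independent up to scalar factors of the choices of $\tilde p$ and $q$; carrying it out carefully in the pseudodifferential calculus, uniformly along $\gamma$, is the technical core. The normal-form reduction and the bundle statement in Stage 1 are comparatively routine, resting on elliptic microlocal regularity and H\"ormander's scalar propagation theorem.
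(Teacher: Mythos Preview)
The paper does not contain a proof of this theorem: it is stated in Section~2 (``Previous results'') as Dencker's theorem from \cite{denckerprincipaltype} and is merely quoted, with no argument given. So there is nothing in the present paper to compare your proposal against.

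That said, your outline is broadly faithful to Dencker's original strategy in \cite{denckerprincipaltype}: reduce to a normal form with a scalar real-principal-type factor (Stage~1), then propagate non-membership in $\operatorname{Pol}(u)$ along bicharacteristics by building a $1\times N$ test operator $B$ whose symbol is parallel for the connection dual to $D_P$ and using an energy/commutator estimate (Stage~2). Your identification of the main difficulty---verifying that the correct transport equation for $\sigma(B)$ is precisely the dual of $D_Pw=H_qw+\tfrac{1}{2}\{\tilde p,p\}w+i\tilde p\,p^s_{m-1}w$, so that subprincipal errors drop an order---is accurate; this is the heart of Dencker's computation. One point to tighten: in Stage~1 you should also argue that $(y,\eta)\notin\operatorname{WF}(Pu)$ persists along a full neighborhood of the bicharacteristic segment you use in Stage~2, not just at the single point, since the energy estimate requires smoothness of $Pu$ along the entire segment; this is immediate once you fix a small enough conic neighborhood, but should be said explicitly.
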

	In \cite{Gerardsobolev}, G\'{e}rard stated that we have similar propagation result for the $H^s$-polarization sets  for systems of real principal type. 
	\begin{theorem}
		Let $P$ be an $N\times N$ system of pseudodifferential operators on a manifold $X$ of order $m$, and let $u\in\mathcal D'(X,\mathbb C^N)$. Assume that $P$ is of real principal type at $(y,\eta)\in\Sigma$, and that $(y,\eta)\notin \operatorname{WF}^s(Pu)$. Then over a neighborhood of $(y,\eta)\in\Sigma$, $\operatorname{Pol}^{s+m-1}(u)$ is a union of Hamilton orbits of $P$.
	\end{theorem}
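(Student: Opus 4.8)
The plan is to split the analysis into the region away from the intersection $\Sigma_2$, where the assertion follows from results already quoted, and a neighborhood of $\nu_0\in\Sigma_2$, where the multiplicity of $\det p$ drops and the genuinely new work lies. Away from $\Sigma_2$ the characteristic variety $\Sigma$ is locally a single non-radial hypersurface on which $P$ is of real principal type, so the $H^s$ refinement of Dencker's theorem (due to G\'erard) already gives that $\operatorname{Pol}^r(u)$ is a union of Hamilton orbits there as soon as $r^*_{Pu}+m-1>r$. The role of the auxiliary system $A$ is to single out, among the several sheets of $\Sigma$ meeting at $\Sigma_2$, the one hypersurface $M_A$ along which propagation is asserted: since $\mathcal N_A\cap\mathcal N_P$ is a line at $\nu_0$, its projection $M_A=\pi_1(\mathcal N_A\cap\mathcal N_P\setminus 0)$ is a well-defined characteristic hypersurface carrying a distinguished one-dimensional polarization, and everything reduces to transporting this line along the Hamilton flow of $M_A$ through $\nu_0$.

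The first step I would carry out is a microlocal normal form for $P$ near $\nu_0$ adapted to the type. For systems of generalized transverse type the defining hypotheses---$d^i(\det p)=0$ for $i<r_0$ and $d^{r_0}(\det p)\neq 0$ at $\Sigma_2$, $\dim\ker p=r_0$, together with the Levi-type condition---allow one to conjugate $P$ microlocally by elliptic factors so that, modulo a negligible error, it decouples into a block-diagonal form: an elliptic block on a complement of $\ker p$, and a block on $\ker p$ that along each sheet reduces to a scalar factor of real principal type. I would perform this reduction inside the Weyl calculus with the weight $\vartheta$ and metric $g$ that Dencker introduced for the classes $S(\vartheta,g)$, which are tailored to control symbols across the change of multiplicity at the involutive intersection $\Sigma_2$; the Levi-type condition is what guarantees that the subprincipal term $ip^s_{m-1}$ enters only through the connection $D_P$ and does not obstruct the factorization.

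Once the normal form is in place, the system $A$ together with the line $\mathcal N_A\cap\mathcal N_P$ isolates the scalar factor $q$ whose characteristic set is $M_A$ and whose Hamilton field generates the bicharacteristics in the conclusion. I would then show that the regularity hypotheses split exactly as needed: $r^*_{Pu}+m-1>r$ controls the inhomogeneity inherited from $Pu$, with the usual gain of $m-1$ orders in the reduction, while $r^*_{Au}>r$ forces all components of $u$ transverse to $\mathcal N_A$ to be of Sobolev order above $r$ at $\nu_0$, so that the only part of $u$ of strength $\le r$ lives in the line $\mathcal N_A\cap\mathcal N_P$. Feeding the resulting scalar real-principal-type equation into G\'erard's theorem yields that the $H^r$-polarization of this component is a union of Hamilton orbits of $q$; since $\mathcal N_A\cap\mathcal N_P$ is already one-dimensional, the parallel sections of the associated connection $D_P$ provide the $\mathcal C^\infty$ trivialization, realizing $\operatorname{Pol}^r(u)$ near $\nu_0$ as the asserted line bundles over the bicharacteristics of $M_A$. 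For systems of MHD type the same scheme is used, but here $\dim\ker p=r_0+1$, and the additional structural assumptions must be invoked to collapse the enlarged kernel block down to the same scalar model along $M_A$.

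I expect the main obstacle to be the normal-form reduction at $\Sigma_2$ itself, where $\det p$ vanishes to order $r_0$ (respectively $r_0+1$) and the kernel jumps. Away from the intersection the statement is a direct consequence of the already-known real-principal-type propagation, and once the scalar model along $M_A$ has been extracted the conclusion is immediate from G\'erard's theorem; the delicate point is proving that the $S(\vartheta,g)$ calculus yields a factorization whose error is genuinely below the threshold dictated by $\min(r^*_{Pu}+m-1,r^*_{Au})$, and that the Levi-type condition---and, in the MHD case, the extra hypotheses---keeps the off-diagonal coupling between the several sheets under order $r$. This is precisely the place where generalized transverse type and MHD type diverge and demand the two separate treatments announced for Sections \ref{section Propagation of polarization sets for systems of generalized uniaxial type} and \ref{section propagation of polarization sets for systems of MHD type}.
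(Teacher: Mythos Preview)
You have misidentified the statement you are meant to prove. The theorem in question is G\'erard's $H^s$ refinement of Dencker's real-principal-type result: $P$ is assumed of real principal type at $(y,\eta)$, there is no intersection manifold $\Sigma_2$, no auxiliary system $A$, no hypothesis on $r^*_{Au}$, and no mention of generalized transverse or MHD type. In the paper this theorem is not proved at all; it is simply quoted as a known result due to G\'erard, immediately after Dencker's original propagation theorem for $\operatorname{Pol}(u)$.

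Your entire proposal is instead an outline for Theorem~\ref{main theorem}, the paper's main result about systems of generalized transverse type and of MHD type at $\nu_0\in\Sigma_2$. Everything you write---the splitting into the region away from $\Sigma_2$, the role of $A$ in selecting the sheet $M_A$, the $S(\vartheta,g)$ calculus, the block reduction on $\ker p$ of dimension $r_0$ or $r_0+1$---pertains to that theorem and has no bearing on the statement actually displayed. As a proof of the displayed statement the proposal is therefore vacuous: none of the objects you manipulate ($\Sigma_2$, $A$, $M_A$, the Levi-type condition) appear in its hypotheses or conclusion, and the real-principal-type assumption you would need to exploit is precisely the thing you invoke G\'erard's theorem to handle on the ``easy'' part of your argument. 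If you intend to address the displayed theorem, the content is a straightforward adaptation of Dencker's original proof to the Sobolev-graded setting, and the paper correctly attributes this to G\'erard without reproducing a proof.
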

 \subsection{Systems of uniaxial type}
	Let $P\in \Psi^{m}_{phg}(X)$ be an $N\times N$ system of classical pseudodifferential operators on a smooth manifold $X$. Let $p$ be the principal symbol of $P$.  Let 
$\Sigma=(\det p)^{-1}(0)$ be the characteristics of $P$, and let 
\begin{align}
\Sigma_2=\{(x,\xi)\in \Sigma: d(\operatorname{det}p)=0\ \text{at}\ (x,\xi)\},
\end{align}
and $\Sigma_1=\Sigma\setminus \Sigma_2$.
Assume that we have \begin{align}\label{2.1}
\begin{split}
&\Sigma=S_1\cup S_2,\ \text{where}\ S_1\ \text{and}\ S_2\ \text{are non-radial hypersurfaces tangent at}\\ &\Sigma_2=S_1\cap S_2\ \text{of exactly order}\ k_0\geq 1,
\end{split}
\end{align}
microlocally near $\nu_0\in\Sigma_2$. This means that the Hamilton field of $S_j$ does not have the radial direction $\langle\xi,\partial_\xi \rangle$, and it means also that the $k_0$th jets of $S_1$ and $S_2$ coincide on $\Sigma_2$, but no $(k_0+1)$th jet does. Note that we have $P$ is of real principal type at $\Sigma_1$, since $d(\det p)\neq 0$ there; see Definition $\ref{def systems of real principal type}$.
$(\ref{2.1})$  gives us that $\Sigma_2$ has to be a manifold of codimension $\geq2$. We assume that 
\begin{align}\label{2.2}
\Sigma_2\ \text{is an involutive manifold of codimension}\ d_0\geq 2.
\end{align}
Moreover, we assume that
\begin{align}\label{2.3}
\text{the (complex) dimension of the fiber of}\  \mathcal N_p \ \text{is equal to}\ 2\ \text{at}\ \Sigma_2,
\end{align}
and
\begin{align}\label{2.4}
d^2(\det p)\neq 0\ \text{at}\ \Sigma_2,
\end{align}
that is, $p$ vanishes of first order on the kernel.
We want to consider the limits of $\mathcal N_P|_{\Sigma_1}$ when we approach $\Sigma_2$, so let
\begin{align}\label{Np at Sj}
\mathcal N_p^j=\mathcal N_p|_{S_j\setminus\Sigma_2},
\end{align}
$T_{\Sigma_2}\Sigma:=T_{\Sigma_2}S_1=T_{\Sigma_2}S_2$ (note here $\Sigma$ is not a manifold), and $\partial\Sigma_1:=T_{\Sigma_2}\Sigma/ T\Sigma_2$. Here $\partial\Sigma_1$ is the normal bundle of $\Sigma_2$ in $S_1$ which is equal to the normal bundle of $\Sigma_2$ in $S_2$. Let $i_0:\Sigma_2\rightarrow \partial \Sigma_1$ denotes the zero section of $\partial\Sigma_1$. By the tubular neighborhood theorem we know that there exists a diffeomorphism $\Phi$ from some neighborhood $\mathcal U\subset S_j$ of $\Sigma_2$ to a neighborhood $\mathcal U_0 \in\partial \Sigma_1$ of the zero section of $\partial\Sigma_1$, and $\Phi$ identifies $\Sigma_2$ itself with the zero section.

Before giving the definition of systems of uniaxial type, we need to give the definition of the limit polarizations.
\begin{defn}
	For $j=1,2$, we define the limit polarizations 
	\begin{align}
	\partial\mathcal N_P^j=\{(\nu,\rho,z)\in\partial \Sigma_1\times \mathbb C^N: \rho\neq 0 \ \text{and}\ z=\underset{k\rightarrow\infty}{\lim} z_k\},
	\end{align}
	where $z_k\in\ker p(\nu_k)$ and $\nu_k\in S_j\setminus \Sigma_2$ satisfy $(\nu-\nu_k)/\lvert \nu-\nu_k\rvert\rightarrow\rho/\lvert\rho\rvert$ when $k\rightarrow\infty$.
\end{defn}
 $\partial \mathcal N_P^j$ is conical in $\xi$ and $\rho$, and homogeneous in the fiber, but it may have (complex) dimension $>1$ at $(\nu,\rho)$. We assume that the fiber of 
\begin{align}\label{2.7}
\partial \mathcal N_P^1\cap\partial\mathcal N_P^2=\{0\}\ \text{over} \  \partial\Sigma_1\setminus(\Sigma_2\times 0).
\end{align}
This condition means that no element in $\mathcal N_P|_{\Sigma_2}$ can be the limit of polarization vectors on both characteristic surfaces, along the same direction. Dencker showed that $(\ref{2.7})$ implies that $\partial  \mathcal N_P^j$ is a complex line bundle over $\partial \Sigma_1\setminus(\Sigma_2\times 0)$, if we assume $(\ref{2.1})$-$(\ref{2.4})$.
Now, we give the definition of systems of uniaxial type.
\begin{defn}
	The system $P$ is of uniaxial type at $\nu_0\in\Sigma_2$, if $(\ref{2.1})$-$(\ref{2.4})$ and $(\ref{2.7})$ hold microlocally near $\nu_0$.
\end{defn}
  If $P\in \Psi^{m}_{phg}$ is of uniaxial type and $Pu\in H^{r}$ near $\nu\in \Sigma_1$, then we already know the result as Dencker showed in \cite{denckerprincipaltype} that $\operatorname{Pol}^{r+m-1}(u)$ is a union of Hamilton orbits in $\mathcal N_P$ near $\nu$ because $P$ is of real principal type at $\Sigma_1$. Now, we want to give Dencker's result  for the propagation of the polarization set when we approach $\Sigma_2$. Here $S_1$ and $S_2$ are tangent at $\Sigma_2$, so their Hamilton fields are parallel on $\Sigma_2$. Since $\Sigma_2$ is involutive, the Hamilton fields are tangent to $\Sigma_2$. Therefore, $\Sigma$ and $\Sigma_2$ are foliated by the bicharacteristics of $\Sigma$. Also, Dencker proved that $\partial\Sigma_1\setminus (\Sigma_2\times 0)$ is foliated by limit bicharactersitcs, which are liftings of bicharactersitics in $\Sigma_2$, and that $\partial\mathcal N_P^1\cup \partial\mathcal N_P^2$ is foliated by limit Hamilton orbits, which are liftings of limits of Hamilton orbits, and are unique line bundles over limit bicharacteristics.
	\begin{theorem}
		Let $P\in\Psi ^{m}_{phg}$ be an $N\times N$ system of uniaxial type at $\nu_0\in \Sigma_2$, and let $A\in \Psi^0_{phg}$ be a $1\times N$ system such that the dimension of $\mathcal N_A\cap\mathcal N_P$ is equal to $1$ at $\nu_0$. Assume that $u\in \mathcal D'(X,\mathbb C^N)$ satisfies $\min (r^*_{Pu}+m-1,r^*_{Au})>r$ at $\nu_0$. Then, $\operatorname{Pol}^r(u)$ is a union of $\mathcal C^\infty$ line bundles in $\mathcal N_A\cap \mathcal N_P$ over bicharacteristics of $\Sigma$ in $M_A=\pi_1(\mathcal N_A\cap\mathcal N_P\setminus 0)$ near $\nu_0$, where $\pi_1:T^*X\setminus 0\times\mathbb C^N\rightarrow T^*X$ is the projection along the fibers.
	\end{theorem}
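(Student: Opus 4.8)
\emph{Proof sketch (plan).} The plan is to follow Dencker's approach in \cite{denckerdoublerefraction}: localize the statement to the line bundle $L=\mathcal N_A\cap\mathcal N_P$, reduce $P$ near $\nu_0$ to a $2\times 2$ model plus an elliptic block, prove a propagation estimate for the model in the Weyl calculus adapted to the order-$k_0$ tangency, and transfer back.

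\emph{Step 1: Localization.} Since $r^*_{Au}>r$ at $\nu_0$, after composing with a scalar cut-off elliptic at $\nu_0$ the $1\times N$ operator $A$ is one of the operators in the intersection defining $\operatorname{Pol}^r$, so $\operatorname{Pol}^r(u)\subseteq\mathcal N_A$ near $\nu_0$. Since $r^*_{Pu}+m-1>r$ at $\nu_0$, i.e.\ $\nu_0\notin\operatorname{WF}^{r-m+1}(Pu)$, the standard microlocal inclusion for systems (Dencker \cite{denckerprincipaltype}) gives $\operatorname{Pol}^r(u)\subseteq\mathcal N_P$ near $\nu_0$. Hence $\operatorname{Pol}^r(u)\subseteq L:=\mathcal N_A\cap\mathcal N_P$, which near $\nu_0$ is a $C^\infty$ line bundle over $M_A=\pi_1(L\setminus0)$, a hypersurface inside $\Sigma$ that is swept out by bicharacteristics of $\Sigma$. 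Because $\operatorname{Pol}^r(u)$ is closed and fibrewise a linear subspace, $\operatorname{Pol}^r(u)_\nu$ equals $\{0\}$ or $L_\nu$ for each $\nu\in M_A$, and $\Gamma:=\{\nu\in M_A:\operatorname{Pol}^r(u)_\nu=L_\nu\}$ is closed in $M_A$; it remains to show that $\Gamma$ is invariant under the bicharacteristic flow near $\nu_0$, for then $\operatorname{Pol}^r(u)=L|_\Gamma$ near $\nu_0$ is a union of $C^\infty$ line bundles over bicharacteristics of $\Sigma$ in $M_A$.

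\emph{Step 2: Away from $\Sigma_2$, and reduction at $\Sigma_2$.} Over $\Sigma_1$, where $d(\det p)\neq0$, $P$ is of real principal type, so by the $H^s$-version of Dencker's theorem (G\'erard, \cite{Gerardsobolev}) $\operatorname{Pol}^r(u)$ is a union of Hamilton orbits there; since $\mathcal N_P$ is a line bundle over $\Sigma_1$, these are exactly the fibres of $L$ over $M_A\cap\Sigma_1$, which yields flow-invariance of $\Gamma$ along bicharacteristics staying in $\Sigma_1$. The remaining content sits at $\nu_0\in\Sigma_2$ and in how $\Gamma$ matches across $\Sigma_2$. Here I would conjugate $P$ by an elliptic Fourier integral operator realizing a canonical transformation that straightens the involutive manifold $\Sigma_2$ together with the tangent hypersurfaces $S_1,S_2$, and multiply on the left by an elliptic $\Psi$DO; using $\dim\mathcal N_P=2$ on $\Sigma_2$, $\dim\mathcal N_P=1$ on $\Sigma_1$, and $d^2(\det p)\neq0$, this brings $P$ to the block form $P_0\oplus E$ with $E$ elliptic and $P_0$ a $2\times2$ system whose principal part is diagonal, the two entries cutting out $S_1$ and $S_2$ (real-principal-type factors tangent to order exactly $k_0$ along $\Sigma_2$), plus a lower-order coupling and a subprincipal term whose orders are controlled by the Levi-type condition. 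By $(\ref{2.7})$ the limit polarization bundles $\partial\mathcal N_P^1$, $\partial\mathcal N_P^2$ become the two coordinate lines of $\mathbb C^2$, so $L$ at $\nu_0$ picks out a fixed line; writing $u_0=(v_1,v_2)$ for the transformed distribution, the claim becomes propagation of $H^r$-regularity for the distinguished scalar component of $u_0$ along the bicharacteristics of the model.

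\emph{Step 3: Model estimate and transfer back.} For the model I would run a H\"ormander-type positive-commutator argument for the coupled transport system $P_0u_0=f_0$, $A_0u_0\in H^r$, in the Weyl calculus with the weight $\vartheta$ and metric $g$ of \cite{denckerdoublerefraction} adapted to the order-$k_0$ tangency, using a commutant whose symbol is monotone along the common Hamilton flow on (straightened) $\Sigma_2$ and degenerates at the correct rate transverse to $\Sigma_2$; the Levi condition is precisely what places the coupling term in $S(\vartheta,g)$ with a gain, so that it is absorbed by the main commutator term. This propagates the $H^r$-bound on the distinguished component of $u_0$ through a neighbourhood of the bicharacteristic through $\nu_0$, and, run in both directions along the flow, gives the flow-invariance of $\Gamma$ across $\Sigma_2$. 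Undoing the reduction finishes the argument, since an elliptic left factor and the block $E$ do not affect polarization sets, while conjugation by the elliptic Fourier integral operator carries $\operatorname{Pol}^r$, $L$ and bicharacteristics covariantly and preserves the $C^\infty$ structure. The main obstacle is exactly this model estimate at $\Sigma_2$: choosing the calculus, the weight $\vartheta$ and the commutant so that the degenerate tangential geometry and the coupling are simultaneously controlled --- the technical heart of \cite{denckerdoublerefraction}. One also needs the companion fact that the connection $D_P$ on each $\mathcal N_P^j$ and the limit polarizations glue consistently along bicharacteristics that enter $\Sigma_2$ along $S_1$ and leave along $S_2$, so that a full polarization is genuinely transported through the touching locus $\Sigma_2$.
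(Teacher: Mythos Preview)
The paper does not prove this theorem. It is stated in Section~2.2 as a previous result of Dencker from \cite{denckerdoublerefraction}, with no proof given; the paper's own contribution is the analogous Theorem~\ref{main theorem} for systems of generalized transverse type and of MHD type, proved in Sections~\ref{section Propagation of polarization sets for systems of generalized uniaxial type} and~\ref{section propagation of polarization sets for systems of MHD type}.

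That said, it is instructive to compare your sketch with the method the paper actually uses for those analogous theorems, since that method is an adaptation of Dencker's uniaxial proof. Your Steps~1--2 (localization into $\mathcal N_A\cap\mathcal N_P$, reduction to a small block $F=\operatorname{Id}D_t+K$ plus an elliptic block, with $K$ controlled in the Weyl class $S(\vartheta,g)$) match the paper's reductions (Propositions~\ref{prop1}, \ref{prop1 generalized}, \ref{prop1 MHD}). Your Step~3, however, diverges: you propose a H\"ormander-type positive-commutator estimate directly on the coupled $2\times2$ model. The paper (and Dencker) do something different. After conjugating $A$ so that $Au\cong(u_1,\dots,u_{r_0-1},0)$, they multiply by the adjugate ${}^tP^{\mathrm{co}}$ to obtain a \emph{scalar} higher-order equation $q_{r_0 r_0}u_{r_0}\in H^{\epsilon,-r_0+1}$ for the single remaining component, and then invoke a Cauchy-problem regularity result (Proposition~\ref{prop 7.1}) whose proof reduces to a first-order symmetric system, builds a symmetrizer $M=\sum\pi_j^*\pi_j$, and uses a Gr\"onwall energy estimate together with the parametrix/microlocal-uniqueness machinery of \cite{denckerpropagationofsing}. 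The passage between $H^{r,s}$ and $H_*^{r,s}$ via Lemma~\ref{lemma 3.9 uniaxial} and the anisotropic estimate of Proposition~\ref{prop 3.7 generalized} is an essential ingredient you do not mention. Also, your claim that the $2\times2$ principal part becomes \emph{diagonal} is stronger than what is actually obtained: one only gets $\operatorname{Id}D_t+K$ with $\sigma(K)$ having the prescribed eigenvalues, and the Levi-type condition is equivalent to $\sigma(K)\in S(\vartheta,g)$ (Proposition~\ref{prop1 generalized}), not to diagonality.

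In short: your outline is a reasonable heuristic, but the mechanism in the paper (and in Dencker's original) is scalarization via the adjugate followed by a Cauchy-problem energy/parametrix argument in the two-parameter spaces $H^{r,s}$, not a positive-commutator argument on the matrix model.
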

 Moreover, in \cite{denckerdoublerefraction}, Dencker showed under what assumptions we get $\operatorname{Pol}^r(u)$ is union of limits of Hamilton orbits in $\mathcal N_A\cap\mathcal N_P$ near $\nu_0\in\Sigma_2$. 
 \subsection{Systems of transverse type}\label{subsection systems of transverse type}
 Finally, we want to state Dencker's result regarding the propagation of polarization sets for systems of transverse type; see \cite{denckertransversaltype}. Let $P\in\Psi ^{m}_{phg}$ be an $N\times N$ system of classical pseudodifferential operators on a smooth manifold $X$, $p=\sigma(P)$ be the principal symbol, and $\Sigma=(\det p)^{-1}(0)$ be the characteristics of $P$. Let 
 \begin{align}
     \Sigma_2=\{(x,\xi)\in\Sigma: d(\det p)=0\ \text{at}\ (x,\xi)\},
 \end{align}
 and $\Sigma_1=\Sigma\setminus \Sigma_2$. For systems of transverse type we have $\Sigma$ is a union of two non radial hypersurfaces intersecting transversally at $\Sigma_2$. More precisely, the systems of transverse type is defined as the following
 \begin{defn}
    The system $P$ is of transverse type at $\nu_0\in \Sigma_2$ if
    \begin{align}
        &\Sigma_2 \ \text{is a non-radial involutive manifold of codimension}\  2,\\
       & \det p=e \cdot q, \ \text{where}\ e\neq 0\ \text{and}\ q\ \text{is real valued with Hessian having rank}\  2\ \text{and positivity}\ 1,\\
       &\dim \ker p=2\ \text{on}\ \Sigma_2,
    \end{align}
    microlocally near $\nu_0$.
 \end{defn}
 Similar to the case of systems of uniaxial type, if $P\in\Psi^m_{phg}$ is of transverse type and $Pu\in H^r$ near $\nu\in\Sigma_1$, then $P$ is of real principal type at $\nu$. Let $\mathcal N_P^j$ be as in $(\ref{Np at Sj})$. In \cite{denckertransversaltype}, Dencker modified slightly the definition of limit polarizations. 
 \begin{defn}
    For $j=1, 2$, the limit polarizations is defined by 
    \begin{align}
    \partial\mathcal N_P^j=\{(\nu,z)\in\Sigma_2\times\mathbb C^N: z=\underset{k\rightarrow\infty}{\lim}z_k\}, 
    \end{align}
where $z_k\in\ker p(\nu_k)$ and $S_j\setminus \Sigma_2\ni \nu_k\rightarrow \nu$.
 \end{defn}
 $\partial \mathcal N_P^j$ is conical in $\xi$ and linear in the fibers. Dencker showed that $\partial\mathcal N_P^j$ is a $\mathcal C^\infty$ line bundle over $\Sigma_2$, $j=1,2$, and that
 \begin{align}
    \partial\mathcal N_P^1\cap \partial\mathcal N_P^2=\{0\} \ \ \ \text{over}\ \Sigma_2.
 \end{align}
 Here, $S_1$ and $S_2$ are transverse at $\Sigma_2$, so their Hamilton fields are non-parallel on $\Sigma_2$. Since $\Sigma_2$ is involutive of codimension $2$, the Hamilton fields of $S_j$ are tangent to $\Sigma_2$ and generate the two-dimensional foliation of $\Sigma_2$. Moreover,
 $\partial\mathcal N_P^j$ is foliated by limit Hamilton orbits which are limits of Hamilton orbits in $\mathcal N_P^j$, and are unique line bundles over bicharacteristics in $S_j$ at $\Sigma_2$ for $j=1,2$. 
 \begin{theorem}
     Let $P\in\Psi ^{m}_{phg}$ be an $N\times N$ system of transverse type at $\nu_0\in \Sigma_2$, and let $A\in \Psi^0_{phg}$ be a $1\times N$ system such that the dimension of $\mathcal N_A\cap\mathcal N_P$ is equal to $1$ at $\nu_0$, and $M_A=\pi_1(\mathcal N_A\cap\mathcal N_P\setminus 0)$ is a hypersurface near $\nu_0$. Assume that $u\in\mathcal D'(X,\mathbb C^N)$ such that $Pu\in H^{r-m+1}$ and $Au\in H^r$ at $\nu_0$. Then $\operatorname{Pol}^r(u)$ is a union of (limit) Hamilton orbits in $\mathcal N_A\cap\mathcal N_P$. Here $\pi_1:T^*X\times\mathbb C^N\rightarrow T^*X$ is the projection along the fibers. 
 \end{theorem}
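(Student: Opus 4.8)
The plan is to reconstruct Dencker's argument by transplanting his real principal type machinery from \cite{denckerprincipaltype} to the crossing $\Sigma_2$, in close parallel with the uniaxial (tangential) case of \cite{denckerdoublerefraction}. Away from $\Sigma_2$ one has $d(\det p)\neq 0$, so $P$ is of real principal type there in the sense of Definition \ref{def systems of real principal type}; by the theorems of Dencker and G\'{e}rard quoted above, $\operatorname{Pol}^r(u)$ is already a union of Hamilton orbits over $M_A\setminus\Sigma_2$, and the hypotheses $\dim(\mathcal N_A\cap\mathcal N_P)=1$ and $Au\in H^r$, together with the microlocal elliptic estimate, confine those orbits to $\mathcal N_A\cap\mathcal N_P$. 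The problem thus reduces to a conic microlocal neighborhood of $\nu_0\in\Sigma_2$, and three things remain: a microlocal normal form, a propagation estimate through $\Sigma_2$, and the recognition of the limiting line bundle via a connection.

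For the normal form I would use all the defining data of transverse type. Since the Hessian of $q$ has rank $2$ and one positive eigenvalue, $q=q_1q_2$ with $q_1,q_2$ real and homogeneous of degree $1$ cutting out the transverse non-radial sheets $S_1,S_2$, and $\Sigma_2=\{q_1=q_2=0\}$ is involutive of codimension $2$; conjugating $P$ by an elliptic Fourier integral operator associated with a canonical transformation that straightens $\Sigma_2$ and identifies $M_A$ with $S_1$, and multiplying on both sides by elliptic $N\times N$ systems, splits off an elliptic $(N-2)\times(N-2)$ block and brings the essential part to a $2\times 2$ system $P_0$ with $\sigma(P_0)=\operatorname{diag}(q_1,q_2)$ modulo a remainder of order $0$. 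The diagonalization of $\sigma(P_0)$ is legitimate precisely because $\dim\ker p=2$ on $\Sigma_2$ while the limit polarizations $\partial\mathcal N_P^1,\partial\mathcal N_P^2$ are $\mathcal C^\infty$ line bundles with $\partial\mathcal N_P^1\cap\partial\mathcal N_P^2=\{0\}$ over $\Sigma_2$: the right null line of $p$ on each sheet extends smoothly across $\Sigma_2$, the two extensions are everywhere independent, and the first order vanishing of $p$ on the kernel guarantees that the factors $q_1,q_2$ genuinely appear to first order. In this frame $\mathcal N_A\cap\mathcal N_P$ is the first coordinate line over $M_A=\{q_1=0\}$ and $\sigma(A)$ is elliptic in the second coordinate direction there.

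The core is an a priori estimate for the reduced system $P_0=\operatorname{diag}(q_1(x,D),q_2(x,D))+R$ with $R\in\Psi^0$. I would run a positive commutator argument: test $2\,\mathrm{Re}\,\langle P_0 v,Qv\rangle$ against a self-adjoint microlocal multiplier $Q$ of the appropriate order and support, built from a symbol that decreases along the Hamilton flow of $q_1$ and is localized near the relevant arc of bicharacteristic, and apply the sharp G\aa rding inequality in a pseudodifferential calculus with a metric adapted to the distance from $\Sigma_2$. The upshot is that for every $1\times N$ operator $B$ with $\sigma(B)$ vanishing on $\mathcal N_A\cap\mathcal N_P$ one obtains, microlocally near $\nu_0$,
\[
\|Bu\|_{(r)}\ \lesssim\ \|Pu\|_{(r-m+1)}+\|Au\|_{(r)}+(\text{lower order norms of }u).
\]
Transversality, that is $\{q_1,q_2\}\neq 0$ on $\Sigma_2$, is what makes the two transport equations decouple at the principal level; the $0$-order off-diagonal coupling in $R$ is absorbed using that it is controlled near $\Sigma_2$ in the adapted calculus together with the bound on the $S_2$-component of $u$ supplied by $Au\in H^r$. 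Feeding in the hypothesis $\min(r^*_{Pu}+m-1,r^*_{Au})>r$ and bootstrapping, one gets $Bu\in H^r$ at $\nu_0$ for all such $B$, that is, $\operatorname{Pol}^r(u)\subseteq\mathcal N_A\cap\mathcal N_P$ over a neighborhood of $\nu_0$ in $M_A$.

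Finally, on the complex line bundle $\mathcal N_A\cap\mathcal N_P\to M_A$ the first equation of the reduced system, $q_1(x,D)u_1=(\text{lower order in }Pu,Au,u)$, defines --- once the subprincipal symbol is built in exactly as in the connection $D_P$ of \cite{denckerprincipaltype} --- a connection along the Hamilton flow of $q_1$, i.e.\ along the bicharacteristics of $M_A$; its parallel sections over $M_A\setminus\Sigma_2$ are the Hamilton orbits in $\mathcal N_P^1$ and over $\Sigma_2$ they are the limit Hamilton orbits in $\partial\mathcal N_P^1$, while, as in \cite{denckerprincipaltype}, changing the normal form data multiplies a parallel section only by a nonvanishing scalar, so the spanned line bundle is intrinsic. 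Reading the estimate along the flow shows that $\operatorname{Pol}^r(u)\cap(\mathcal N_A\cap\mathcal N_P)$ is invariant under this parallel transport, hence it is a union of $\mathcal C^\infty$ (limit) Hamilton orbits over bicharacteristics of $M_A$, which is the assertion. I expect the propagation estimate to be the main obstacle: one must rule out leakage of $\operatorname{WF}^r(u)$ from $S_1$ onto $S_2$ across $\Sigma_2$ and choose $Q$ so that the commutator symbol is nonnegative modulo errors the adapted calculus absorbs --- and it is exactly here that non-radiality and involutivity of $\Sigma_2$, transversality of $S_1,S_2$, and the independence $\partial\mathcal N_P^1\cap\partial\mathcal N_P^2=\{0\}$ all enter, with no counterpart in the plain real principal type theory.
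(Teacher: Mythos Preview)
Your overall architecture---normal form, a propagation estimate across $\Sigma_2$, and identification of the limiting line bundle via a connection---matches the paper's, but the analytic core you propose is genuinely different from what the paper (following Dencker) actually does. The paper does not run a positive commutator/G\aa rding argument. In the proof of Theorem~\ref{main theorem} for generalized transverse type (Section~\ref{section Propagation of polarization sets for systems of generalized uniaxial type}), which specializes to the present $r_0=2$ case, the scheme is: (i) put $P$ in the normal form $\operatorname{Id}_2 D_t+K(t,x,D_x)$ with $k=\sigma(K)\in\mathcal C^\infty(\mathbb R,S(\vartheta,g))$---no diagonalization of the operator; (ii) conjugate so that $Au\cong(u_1,0)$ and then apply the adjugate $\,{}^tP^{\operatorname{co}}$ to extract a \emph{scalar} equation $q_{22}u_2\in H^{\epsilon,-1}$; (iii) pass to the anisotropic spaces $H^{r,s}$, $H^{r,s}_*$ adapted to $\Sigma_2$ via Proposition~\ref{prop 3.7 generalized} and Lemma~\ref{lemma 3.9 uniaxial}, reduce to a Cauchy problem in $t$, and close by an energy estimate with a symmetrizer together with a parametrix (Lemma~\ref{lemma 5.2}, Propositions~\ref{prop 7.1} and~\ref{regularity proposition}). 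Your commutator route is a plausible alternative, but it is not what carries the argument here; the paper's machinery is hyperbolic-Cauchy rather than commutator-based, and the adjugate trick replacing the system by a scalar operator on the distinguished component is the step you are missing.

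There is also a concrete error in your sketch. You write ``transversality, that is $\{q_1,q_2\}\neq 0$ on $\Sigma_2$''. This is false: by hypothesis $\Sigma_2$ is \emph{involutive}, so $\{q_1,q_2\}$ vanishes on $\Sigma_2$; transversality of $S_1,S_2$ means $dq_1\wedge dq_2\neq 0$, not that their Poisson bracket is nonzero. Involutivity is exactly what makes the Hamilton fields $H_{q_1},H_{q_2}$ tangent to $\Sigma_2$ and gives the bicharacteristic foliation of $\Sigma_2$ any meaning. The decoupling you want does not come from a nonvanishing bracket; it comes from the eigenvalue gap $|\beta_2|\approx|\xi'|$, i.e.\ from $k\in S(\vartheta,g)$, which by Proposition~\ref{prop1 generalized} is equivalent to $\partial\mathcal N_P^1\cap\partial\mathcal N_P^2=\{0\}$. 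Any commutator argument you attempt will have to be organized around this separation in the adapted calculus, not around $\{q_1,q_2\}$, and you should expect the absorption of the order-$0$ off-diagonal coupling to require precisely the $S(\vartheta,g)$ control rather than a bracket condition.
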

 Note that in this case $M_A=S_j$ for some $j$, and $\mathcal N_A\cap\mathcal N_P$ is a union of (limit) Hamilton orbits.
	\section{Propagation of polarization sets for systems of generalized transverse type}\label{section Propagation of polarization sets for systems of generalized uniaxial type}
	In this section, we generalize Dencker's result stated in Section $\ref{subsection systems of transverse type}$ by considering the system to have its characteristic set is union of $r_0$ hypersurfaces intersecting transversally at an involutive manifold of codimension $d_0\geq 2$, with $r_0\geq 2$. Let $P\in\Psi^{m}_{phg}(X)$ be an $N\times N$ system of classical pseudodifferential operators on a smooth manifold $X$. Let $p=\sigma(P)$ be the principal symbol and $\Sigma=(\det p)^{-1}(0)$ the characteristic set. Assume microlocally near $(x_0,\xi_0)\in\Sigma$ that
	\begin{align}\label{2.1 double refraction}
	\begin{split}
	\Sigma=&\bigcup_{j=1}^{r_0} S_j,\  r_0\geq2, \ \text{where}\  S_j\  \text{are non-radial hypersurfaces intersecting transversally at}\\ \  &\Sigma_2=\bigcap_{j=1}^{r_0} S_j,
	\end{split}
	\end{align}
	\begin{align}\label{2.2 double refraction}
	\Sigma_2\ \text{is an involutive manifold of codimension}\ d_0\geq 2.
	\end{align}
	Moreover, assume that
	\begin{align}\label{2.3 double refraction}
	\text{the dimension of the fiber of}\ \mathcal N_P\ \text{is equal to}\ r_0\ \text{at}\ \Sigma_2,
	\end{align}
	and
	\begin{align}\label{2.4 double refraction}
	d^{i}(\det p)=0\ \text{for}\ i<r_0 \ \text{and}\ d^{r_0}(\det p)\neq 0\ \ \ \ \text{at}\ \Sigma _2.
	\end{align}
 We want to consider the limits of $\mathcal N_P|_{\Sigma_1}$ when we approach $\Sigma_2$, so let
$T_{\Sigma_2}\Sigma:=\bigcup_{j=1}^{r_0}T_{\Sigma_2}S_j$ (note that $\Sigma$ is not a manifold), and $\partial\Sigma_1:=T_{\Sigma_2}\Sigma/ T\Sigma_2$. Before giving the definition of systems of generalized transverse type, we need to give the definition of the limit polarizations.
\begin{defn}\label{definition limit pol}
	For $j=1,..., r_0$, we define the limit polarizations 
	\begin{align}
	\partial\mathcal N_p^j=\{(\nu,\rho,z)\in\partial \Sigma_1\times \mathbb C^N: \rho\neq 0 \ \text{and}\ z=\underset{k\rightarrow\infty}{\lim} z_k\},
	\end{align}
	where $z_k\in\ker p(\nu_k)$ and $\nu_k\in S_j\setminus \Sigma_2$ satisfy $(\nu-\nu_k)/\lvert \nu-\nu_k\rvert\rightarrow\rho/\lvert\rho\rvert$ when $k\rightarrow\infty$.
\end{defn}
 $\partial \mathcal N_P^j$ is conical in $\xi$ and $\rho$, and homogeneous in the fiber.
	  We will assume that the fiber of 
\begin{align}\label{2.7 double refraction}
\mathcal \partial N_P^1\cap...\cap\mathcal \partial N_P^{r_0}=\{0\}\ \ \ \ \text{over}\ \ \partial \Sigma_1\setminus (\Sigma _2\times 0).
\end{align}
This condition means that no element in $\mathcal N_P|_{\Sigma_2}$ can be the limit of polarization vectors on all characteristic surfaces along the same direction.
	\begin{defn}
		The system $P$ is of generalized transverse type at $\nu_0\in \Sigma _2$, if $(\ref{2.1 double refraction})$-$(\ref{2.4 double refraction})$, and $(\ref{2.7 double refraction})$  hold microlocally near $\nu_0$. 
	\end{defn}
	\begin{prop}\label{prop1}
		Let $P\in \Psi ^1_{phg}$ be an $N\times N$ system of generalized transverse type at $\nu_0\in \Sigma _2$. Then by choosing suitable symplectic coordinates, we may assume that $X=\mathbb R\times \mathbb R^{n-1}$, $\nu_0=(0;(0,...,1))$, and 
		\begin{align}\label{2.3 singularities}
		S_j=\{(t,x;\tau,\xi)\in T^*(\mathbb R\times \mathbb R^{n-1}): \tau+\beta_j(t,x,\xi)=0\},\ \ j=1,...,r_0,
		\end{align}
		microlocally near $\nu_0$. Here $\beta_j$ are real and homogeneous of degree $1$ in $\xi$; with
		$\beta_1\equiv 0$, satisfies in a conical neighborhood of $\nu_0$
		\begin{align}\label{betaj}
		c\lvert \xi '\rvert\leq \lvert \beta_j\rvert\leq C\lvert \xi '\rvert,\ \ \ j=2,...,r_0, \ \ 0<c<C,
		\end{align}
		where $(\tau,\xi',\xi'')\in\mathbb R\times\mathbb R^{d_0-1}\times \mathbb R^{n-d_0}$, which gives $\Sigma _2=\{\tau=0, \xi'=0\}$. By conjugating $P$ with an elliptic, scalar Fourier integral operators, and multiplying with elliptic $N\times N$ systems of order $0$, we may assume that
		\begin{align}
		P\cong\begin{pmatrix}
		F & 0\\
		0 & E
		\end{pmatrix}\ \ \operatorname{mod}\ \mathcal C^\infty,
		\end{align}
		microlocally near $\nu_0$, where $E\in \Psi ^1_{phg}$ is an elliptic $(N-r_0)\times (N-r_0)$ system and 
		\begin{align}\label{3.4}
		F\cong \operatorname{Id}_{r_0} D_t+K(t,x,D_x)\ \ \operatorname{mod}\ \mathcal C^\infty.
		\end{align}
		Here $K(t,x,D_x)\in\mathcal C^\infty(\mathbb R,\Psi ^1_{phg})$ is an $r_0\times r_0$ system, such that $k=\sigma(K)$ has $0$, $\beta_2$,...,$\beta_{r_0}$ as eigenvalues.
	\end{prop}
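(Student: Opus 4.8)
The plan is to reduce the system to a normal form by a succession of standard microlocal moves, exactly in the spirit of Dencker's reductions for the transverse and uniaxial cases, and then to diagonalize the leading block using the kernel structure forced by the hypotheses. First I would use that $\Sigma_2$ is a non-radial involutive manifold of codimension $d_0$: by the classical Darboux/normal-form theorem for involutive submanifolds there is a homogeneous canonical transformation bringing $\Sigma_2$ to $\{\tau = 0,\ \xi' = 0\}$ with $\nu_0 = (0;(0,\dots,1))$, and since the $S_j$ are non-radial hypersurfaces through $\Sigma_2$ whose Hamilton fields are not radial, each $S_j$ is locally the zero set of $\tau + \beta_j(t,x,\xi)$ with $\beta_j$ real, homogeneous of degree $1$; after possibly translating we may take $\beta_1 \equiv 0$. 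The transversality in $(\ref{2.1 double refraction})$ says the differentials $d(\tau + \beta_j)$ are independent modulo $T\Sigma_2$, which forces $\beta_j - \beta_k$ to be elliptic on $\Sigma_2$ for $j \neq k$; in particular $\beta_j$ itself ($=\beta_j - \beta_1$) satisfies the two-sided bound $(\ref{betaj})$ in a conical neighborhood of $\nu_0$. Conjugating $P$ by the elliptic scalar Fourier integral operator implementing this canonical transformation is harmless for the polarization-set statement.

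Next I would carry out the block decoupling. Condition $(\ref{2.3 double refraction})$ says $\dim \ker p = r_0$ on $\Sigma_2$, and $(\ref{2.4 double refraction})$ — the vanishing of $d^i(\det p)$ for $i < r_0$ together with $d^{r_0}(\det p) \neq 0$ — says that near $\nu_0$ exactly $r_0$ eigenvalues of $p$ (counted with multiplicity) collapse at $\Sigma_2$ while the remaining $N - r_0$ stay bounded away from zero. Hence $p$ has a spectral gap: by the standard block-diagonalization for pseudodifferential systems with a spectrally separated group of eigenvalues (a microlocal Riesz-projection / Grushin argument, as used by Dencker), one finds elliptic $N \times N$ systems $B_1, B_2 \in \Psi^0_{phg}$ with $B_1 P B_2 \cong \operatorname{diag}(F, E) \bmod \mathcal C^\infty$ near $\nu_0$, where $E$ is elliptic $(N - r_0) \times (N - r_0)$ and $F$ is an $r_0 \times r_0$ system of order $1$ whose principal symbol has $\Sigma$ (locally) as its full characteristic set. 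Multiplying $F$ on the left by the elliptic symbol $(\partial_\tau f)^{-1}$ (nonvanishing since the $\tau$-derivative of $\det f = \prod(\tau + \beta_j)$ is nonzero at $\nu_0$ in the relevant sense), we can arrange that $\sigma(F) = \operatorname{Id}_{r_0}\tau + k(t,x,\xi)$ for an $r_0 \times r_0$ symbol $k$ independent of $\tau$, i.e. $F \cong \operatorname{Id}_{r_0} D_t + K(t,x,D_x) \bmod \mathcal C^\infty$ with $K \in \mathcal C^\infty(\mathbb R, \Psi^1_{phg})$.

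It remains to identify $\sigma(K) = k$. Since $\det \sigma(F) = \det(\tau\operatorname{Id}_{r_0} + k) = \prod_{j=1}^{r_0}(\tau + \beta_j)$ up to an elliptic factor that the normalization above has removed, the eigenvalues of $-k(t,x,\xi)$ are exactly $-\beta_1,\dots,-\beta_{r_0}$, equivalently $k$ has $0 = \beta_1,\beta_2,\dots,\beta_{r_0}$ as eigenvalues; this is the last assertion of the proposition. I would record that all reductions were by conjugation with elliptic scalar FIOs and multiplication by elliptic $N \times N$ systems of order $0$, none of which affect the statement to be proved later (they transform $\operatorname{Pol}^r(u)$ and $\mathcal N_A \cap \mathcal N_P$ in a controlled, invertible way).

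The main obstacle is the block-diagonalization step: one must check that the spectral projector onto the collapsing $r_0$-dimensional eigenspace is a genuine pseudodifferential operator of order $0$ uniformly near $\nu_0$ — i.e. that the eigenvalue group stays uniformly separated from the rest of the spectrum in a full conical neighborhood, not just on $\Sigma_2$ — and that the resulting $F$ really has order $1$ with the stated principal symbol after normalizing the $D_t$-coefficient. This is where $(\ref{2.4 double refraction})$ and $(\ref{betaj})$ are used quantitatively; the remaining steps (Darboux normal form, the shape of the $\beta_j$, reading off the eigenvalues of $k$) are routine.
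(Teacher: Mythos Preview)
Your overall strategy matches the paper's: straighten $\Sigma_2$ and $S_1$ by a homogeneous canonical transformation, read off the graph form $S_j=\{\tau+\beta_j=0\}$ with the two-sided bounds $(\ref{betaj})$ from pairwise transversality, block-diagonalize into an elliptic $(N-r_0)\times(N-r_0)$ piece and a characteristic $r_0\times r_0$ piece, and then put the latter into $D_t$-normal form. For the block-diagonalization you propose a spectral-projection/Grushin argument; the paper instead chooses homogeneous bases mapping $\ker p$ and $\operatorname{Im} p$ to coordinate blocks over $\Sigma_2$ and then kills the off-diagonal blocks by an explicit Schur complement (multiplying by upper- and lower-triangular elliptic systems). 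Both routes are standard and lead to the same place; your worry about uniform spectral separation is not a real obstacle, since the $(N-r_0)$ ``elliptic'' eigenvalues are bounded away from zero in a full conical neighborhood of $\nu_0$.

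The substantive gap is in the step ``multiply $F$ on the left by $(\partial_\tau f)^{-1}$ to arrange $\sigma(F)=\tau\operatorname{Id}_{r_0}+k(t,x,\xi)$ with $k$ independent of $\tau$.'' This does not follow: $(\partial_\tau f)^{-1}f$ still depends on $\tau$ in general (its $\tau$-derivative is $\operatorname{Id}$ only at points where $f=0$, not throughout a neighborhood), and nothing so far forces $f$ to be affine in $\tau$. What is actually needed is a \emph{matrix} Malgrange--Weierstrass preparation: from $\partial_\tau^{r_0}(\det f)\neq 0$ at $\Sigma_2$ one gets $\det(\partial_\tau f)\neq 0$ there, and then a preparation theorem for systems (Theorem~A.3 in Dencker's uniaxial paper) yields an invertible $C_0\in S^0$ and a $\tau$-independent $k$ with $f=C_0(\tau\operatorname{Id}_{r_0}+k)$. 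After absorbing $C_0$ you get the principal-symbol statement; the eigenvalue identification for $k$ then follows exactly as you say. A second, related omission is the treatment of lower-order terms: to obtain $F\cong\operatorname{Id}_{r_0}D_t+K(t,x,D_x)$ modulo $\mathcal C^\infty$ with $K\in\mathcal C^\infty(\mathbb R,\Psi^1_{phg})$ one must iteratively remove the $\tau$-dependence from every homogeneous term in the expansion, which the paper does via a division theorem (Theorem~A.4 in the same reference) and induction. Once you plug in these two preparation/division results, your argument is complete and essentially identical to the paper's.
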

	\begin{proof}We will prove it in a way similar to how Dencker proved the normal form for systems of uniaxial type. Since the result is local, we may assume $X=\mathbb R^n$. Because $\Sigma_2$ is involutive, we may choose symplectic, homogeneous coordinates $(x,\xi)\in T^*\mathbb R^n$ near $\nu_0\in \Sigma _2$, so that $\nu_0=(0;(0,...,1))$ and 
		\begin{align}
		\Sigma _2=\{(x,\xi)\in T^*\mathbb R^n:\xi'=0\},
		\end{align}
		where $\xi=(\xi',\xi'')\in\mathbb R^{d_0}\times\mathbb R^{n-d_0}$. We may also assume
		\begin{align}
		S_1=\{(x,\xi)\in T^*\mathbb R^n:\xi_1=0\},
		\end{align}
		near $\nu_0$.
		Now, we rename $x_1=t$, $(x_2,..,x_{d_0})=x'$, and $(x_{d_0+1},...,x_n)=x''$. Since $S_j$ is intersecting transversally with $S_1$ at $\Sigma _2$, we obtain
		\begin{align}
		S_j=\{(t,x;\tau,\xi)\in T^*(\mathbb R\times \mathbb R^{n-1}): \tau+\beta_j(t,x,\xi)=0\},
		\end{align}
		with $\beta_j$ real and homogeneous of degree $1$ in $\xi$, $\beta_1\equiv 0$, and 
		\begin{align}
		c\lvert \xi '\rvert\leq \lvert \beta_j-\beta_k\rvert\leq C\lvert \xi '\rvert, \ \ \ j\neq k,
		\end{align}
		in a conical neighborhood of $\nu_0$. By taking $k=1$, we obtain 
		\begin{align}
		c\lvert \xi '\rvert\leq \lvert \beta_j\rvert\leq C\lvert \xi '\rvert, \ \ \ j=2,...,r_0.
		\end{align}
		Using that $\dim \mathcal N_P=r_0$ at $\Sigma_2$,	we can find an $N\times N$ elliptic matrix $b$ homogeneous of degree $0$ in the $\xi$ variables which maps $\operatorname{Im}p$ to $\{ z\in\mathbb C^{N}; z_j=0, \ j\leq r_0\}$ over $\Sigma_2$ near $\nu_0$, and we can choose an $N\times N$ matrix $a$ homogeneous of degree $0$ in the $\xi$ variables such that $a^{-1}$ maps $\ker p$ onto $\{z\in\mathbb C^{N}; z_j=0,\ j>r_0\}$ over $\Sigma_2$ near $\nu_0$. 
		Then we have 
		\begin{align}\label{matrix principal symbol}
		bpa=
		\begin{pmatrix}
		s_{11} & s_{12}
		\\ s_{21} & e
		\end{pmatrix}
		\end{align}
		such that $e$ is an $(N-r_0)\times (N-r_0)$ matrix which is elliptic at $\nu_0$, and $s_{11}$, $s_{12}$, $s_{21}$, vanish on $\Sigma_2$ near $\nu_0$.
		
		Now, we choose $N\times N$ systems of pseudodifferential operators $A$ and $B$ with principal symbols $a$, and $b$ respectively. Then 
		\begin{align}
		BPA=\begin{pmatrix}
		S_{11} &S_{12}\\
		S_{21}& E
		\end{pmatrix}
		\end{align}
		where its principal symbol is given by $(\ref{matrix principal symbol})$. As $E$ is a system of order $1$ which is elliptic at $\Sigma_2$, choose $J$ to be its microlocal parametrix of order $-1$. Multiply $BPA$  from the left with 
		\begin{align}
		B_1=
		\begin{pmatrix}
		\operatorname{Id}_{r_0}& -S_{12}J\\
		0& \operatorname{Id}_{N-r_0}
		\end{pmatrix}.
		\end{align}
		Multiply also $B_1BPA$ from the right by 
		\begin{align}
		A_1=
		\begin{pmatrix}
		\operatorname{Id}_{r_0}& 0\\
		-JS_{21}& \operatorname{Id}_{N-r_0}
		\end{pmatrix}.
		\end{align}
		Hence, we get 
		\begin{align}
		P\cong\begin{pmatrix}
		F & 0\\
		0 & E
		\end{pmatrix}\ \ \operatorname{mod}\ \mathcal C^\infty,
		\end{align}
		microlocally near $\nu_0$, where $E\in \Psi ^1_{phg}$ is an elliptic $(N-r_0)\times (N-r_0)$ system. If $f$ is the principal symbol for $F$, then conditions $(\ref{2.4 double refraction})$ and $(\ref{2.3 singularities})$ imply 
		\begin{align}
		\det f =c\tau\prod_{i=2}^{r_0}(\tau+\beta_i), \ 0\neq c\in  S^{-1},
		\end{align}
		thus $\partial_{\tau}^{r_0}(\det f)=\det(\partial_{\tau} f)\neq 0$ at $\Sigma_2$. By Theorem A.3 in \cite{denckerdoublerefraction}, and homogeneity, we may find homogeneous system $C_0\in S^0$ such that 
		\begin{align}
		f=C_0(\tau\operatorname{Id}_{r_0}+k(t,x,\xi)),
		\end{align}
		where $\det C_0\ne 0$ at $\Sigma_2$. By multiplication with an elliptic system, we may assume $C_0\equiv\operatorname{Id}_{r_0}$. Thus, $\det f=\tau\prod_{i=2}^{r_0} (\tau+\beta_j)$, which implies that $k(t,x,\xi)$ has the eigenvalues $0$, $\beta_2$,...,$\beta_{r_0}$. If $f_0\in S^0$ is the term homogeneous of degree $0$ in the expansion of $F$, then Theorem A.4 in \cite{denckerdoublerefraction}, and homogeneity give
		\begin{align}
		f_0=B_{-1}f+B_0,
		\end{align} 
		where $B_0\in\mathcal C^\infty (\mathbb R, S^0)$ is independent of $\tau$, and $B_{-1}\in  S^{-1}$. By multiplying $f$ with an operator with symbol $\operatorname{Id}_{r_0}-B_{-1}$, we may assume $B_{-1}\equiv 0$. By induction over lower order terms we obtain $(\ref{3.4})$.
	\end{proof}
We want to introduce symbol classes adapted to the functions $\beta_j$ defined in $(\ref{betaj})$ for $j=2,...,r_0$. Let 
\begin{align}\label{w}
\vartheta(\xi)=\langle \xi'\rangle,
\end{align}
where $\langle \xi' \rangle=(1+\lvert \xi'\rvert^2)^{1/2}$, thus $\vartheta\approx 1+\lvert\beta_j\rvert$. Consider the metric 
\begin{align}\label{metric generalized uniaxial}
g(dx,d\xi)=\lvert dx\rvert ^2+\lvert d\xi'\rvert^2\langle\xi'\rangle^{-2}+\lvert d\xi''\rvert^2 \langle \xi \rangle^{-2},
\end{align}
and $h^2=\sup g/g^\sigma=\langle \xi' \rangle^{-2}$. We get that $g$ is $\sigma$ temperate, and $\beta_j\in S(\vartheta,g)$. Check \cite[Chapter XVIII] {hormander3} to know more about the symbol classes $S(\vartheta,g)$ of the Weyl calculus. Moreover, using Taylor's formula we can write 
\begin{align}\label{taylor betaj generalized}
\beta_j=a_j \xi',
\end{align}
with $a_j\in S^0$ is homogeneous of degree $0$ in $\xi$.
\begin{prop}\label{prop1 generalized}
	Let 
	\begin{align}\label{form normal}
	P=\operatorname{Id}_{r_0} D_t+K(t,x,D_x)
	\end{align}
	be an $r_0\times r_0$ system with $K\in\mathcal C^\infty (\mathbb R,\Psi^1_{phg})$, such that the eigenvalues of $k=\sigma(K)$ are $0$, $\beta_2$,...,$\beta_{r_0}$. Then $P$ is of generalized transverse type if and only if $k\in\mathcal C^\infty(\mathbb R, S(\vartheta,g))$.
\end{prop}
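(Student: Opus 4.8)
The plan is to read the defining conditions $(\ref{2.1 double refraction})$--$(\ref{2.4 double refraction})$ and $(\ref{2.7 double refraction})$ of generalized transverse type directly off the normal form, working in the symplectic coordinates of Proposition $\ref{prop1}$, so that $\Sigma_2=\{\tau=0,\ \xi'=0\}$, $S_j=\{\tau+\beta_j=0\}$, $\beta_1\equiv0$, and the $\beta_j$ are real, homogeneous of degree $1$, with the separation $(\ref{betaj})$. For $P=\operatorname{Id}_{r_0}D_t+K$ with $p=\tau\operatorname{Id}_{r_0}+k$ one has $\det p=\tau\prod_{i=2}^{r_0}(\tau+\beta_i)$, a product of $r_0$ smooth factors each vanishing simply on $\Sigma_2$; hence $d^{i}(\det p)=0$ for $i<r_0$ while $d^{r_0}(\det p)\neq0$ at $\Sigma_2$ (the last statement being the linear independence of the conormals $d\tau,\ d\tau+d\beta_2,\dots,d\tau+d\beta_{r_0}$, i.e. transversality of the $S_j$), which is $(\ref{2.4 double refraction})$. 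In the same coordinates $\Sigma_2$ is manifestly involutive of codimension $d_0$, giving $(\ref{2.2 double refraction})$; each $S_j$ is a smooth hypersurface whose Hamilton field $H_{\tau+\beta_j}$ has a nonvanishing $\partial_t$-component, hence is never radial; and the $S_j$ meet transversally along $\Sigma_2$ --- this is $(\ref{2.1 double refraction})$. So the proposition comes down to the equivalence of $k\in\mathcal C^\infty(\mathbb R,S(\vartheta,g))$ with the remaining conditions $(\ref{2.3 double refraction})$ and $(\ref{2.7 double refraction})$.

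The heart of the argument is the chain of equivalences, for the principal symbol $k$ (classical, homogeneous of degree $1$, with eigenvalues $0,\beta_2,\dots,\beta_{r_0}$),
\begin{align*}
k\in\mathcal C^\infty(\mathbb R,S(\vartheta,g))\ \Longleftrightarrow\ k\equiv0\ \text{on the cone}\ \{\xi'=0\}\ \Longleftrightarrow\ \dim_{\mathbb C}\mathcal N_P=r_0\ \text{on}\ \Sigma_2 .
\end{align*}
The right-hand equivalence is immediate: on $\Sigma_2$ we have $\tau=0$ and hence $p=k$, so the $r_0\times r_0$ kernel has dimension $r_0$ exactly when $k$ vanishes there. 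For the left-hand one: if $k\in S(\vartheta,g)$ then $|k|\le C\vartheta=C\langle\xi'\rangle$, so $|k|\le C$ on $\{\xi'=0\}$, and homogeneity of degree $1$ forces $k\equiv0$ there; conversely, if $k$ vanishes on $\{\xi'=0\}$, Taylor's formula in $\xi'$ (as in $(\ref{taylor betaj generalized})$) gives $k=\sum_{l}\xi'_l\,k_l$ with $k_l=\int_0^1(\partial_{\xi'_l}k)(\,\cdot\,,s\xi',\xi'')\,ds\in\mathcal C^\infty(\mathbb R,S^0_{phg})$ homogeneous of degree $0$, and a direct estimate against the anisotropic metric $g$ of $(\ref{metric generalized uniaxial})$ --- using $|\xi'|\le\vartheta$, that $x$-derivatives of $k_l$ stay bounded, and that each $\xi'$- or $\xi''$-derivative lowers the order of a classical symbol by one, so is $O(\langle\xi\rangle^{-1})\subseteq O(\langle\xi'\rangle^{-1})$ --- shows that each $\xi'_l k_l$, and hence $k$, lies in $\mathcal C^\infty(\mathbb R,S(\vartheta,g))$. (By $(\ref{taylor betaj generalized})$ the eigenvalues $\beta_j=a_j\xi'$ of $k$ already lie in $S(\vartheta,g)$, so the hypothesis is consistent.) These estimates are routine but should be carried out with care against $g$, and I expect them to account for much of the bookkeeping.

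It remains to tie $(\ref{2.7 double refraction})$ to the same condition. One direction is trivial: if $P$ is of generalized transverse type then $(\ref{2.3 double refraction})$ holds, hence $k\in\mathcal C^\infty(\mathbb R,S(\vartheta,g))$. For the converse, assuming $k\in S(\vartheta,g)$ --- so $k\equiv0$ on $\Sigma_2$ --- I would rescale near $\Sigma_2$: a normal direction $\rho$ enters the definition of $\partial\mathcal N_P^j$ only if it is tangent to $S_j$, and then, as $\nu\in S_j\setminus\Sigma_2$ approaches $\nu_0\in\Sigma_2$ along $\rho$, the rescaled symbol $k(\nu)/|\nu-\nu_0|$ converges to the directional derivative $dk_{\nu_0}(\rho)$ with eigenvalue $d\beta_j(\nu_0)(\rho)$; the limiting matrix depends on $(\nu_0,\rho)$ but not on $j$, so $\partial\mathcal N_P^j|_{(\nu_0,\rho)}$ lies in the eigenspace of this single matrix attached to the eigenvalue $d\beta_j(\nu_0)(\rho)$. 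When $\operatorname{codim}\Sigma_2=2$ each nonzero $\rho$ is tangent to at most one $S_j$ (distinct lines in a rank-$2$ normal bundle meet only at $0$), so at most one $\partial\mathcal N_P^j$ is nontrivial over $\rho$ and $(\ref{2.7 double refraction})$ is automatic. In higher codimension one has to work harder, controlling the limits of the $\beta_j$-eigenvectors of $k$ along exactly those normal directions that are tangent to more than one $S_j$ --- precisely where the linearized model $dk_{\nu_0}(\rho)$ carries a repeated eigenvalue and the full strength of $k\in S(\vartheta,g)$, not just vanishing of its value on $\Sigma_2$, is needed; I expect this to be the main obstacle. Putting the three parts together gives both implications of the proposition.
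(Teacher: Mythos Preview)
Your forward implication --- deducing $k\in\mathcal C^\infty(\mathbb R,S(\vartheta,g))$ from $(\ref{2.3 double refraction})$ alone --- is correct and is in fact more economical than the paper's route. The paper instead argues the contrapositive entirely through $(\ref{2.7 double refraction})$: assuming $k$ is \emph{not} $O(\beta_j)$, it picks a sequence $\nu_l$ (with $\tau_l=0$) along which $|k(\nu_l)|/|\beta_j(\nu_l)|\to\infty$, normalizes $k$ by $|k|$ to extract a limit matrix $\varepsilon=(\varepsilon^{i,j})$, and observes that every $\partial\mathcal N_P^s$ over the resulting direction contains $\ker\varepsilon$; since $\det k\equiv 0$ forces $\det\varepsilon=0$, this kernel is nontrivial and $(\ref{2.7 double refraction})$ fails. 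Both arguments are valid; yours is shorter because condition $(\ref{2.3 double refraction})$ already forces $k\equiv 0$ on $\Sigma_2$. (A small wrinkle: your identification of $d^{r_0}(\det p)\neq 0$ with linear independence of the conormals is not quite right --- the nonvanishing of the $r_0$-th differential of a product of $r_0$ simple factors only needs each $df_j\neq 0$, while linear independence is the transversality assumption $(\ref{2.1 double refraction})$. The conclusion that $(\ref{2.4 double refraction})$ holds is unaffected.)

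The acknowledged gap in your converse direction is exactly where the paper's argument differs from yours, and the fix is simpler than you anticipate. You rescale $p$ by the scalar $|\nu-\nu_0|$ and land on the matrix $dk_{\nu_0}(\rho)$, whose eigenvalues $d\beta_j(\rho)$ may collide when $d_0>2$. The paper rescales instead by $\beta_s$ for a fixed $s\ge 2$: since $k=O(\beta_s)$, the ratio $k/\beta_s$ has a limit $\gamma_s$ along the approach direction, and then on $S_1=\{\tau=0\}$ the kernel of $p=k$ becomes $\ker(k/\beta_s)\to\ker\gamma_s$, while on $S_s=\{\tau=-\beta_s\}$ the kernel of $p=-\beta_s\operatorname{Id}+k$ becomes $\ker(-\operatorname{Id}+k/\beta_s)\to\ker(\gamma_s-\operatorname{Id})$. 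Thus $\partial\mathcal N_P^1$ and $\partial\mathcal N_P^s$ lie in eigenspaces of the \emph{same} matrix $\gamma_s$ for the \emph{automatically distinct} eigenvalues $0$ and $1$, so their intersection is $\{0\}$, and a fortiori $\bigcap_j\partial\mathcal N_P^j=\{0\}$. The point is that normalizing by $\beta_s$ rather than by raw distance pins one eigenvalue at $1$ regardless of the direction $\rho$, and you never need to compare $\partial\mathcal N_P^i$ with $\partial\mathcal N_P^j$ for $i,j\ge 2$ --- the pair $(1,s)$ already kills the intersection. This removes your ``main obstacle'' without any case analysis on the codimension.
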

\begin{proof}
	First, let $k=(k_{ij})_{1\leq i,j\leq r_0}$. Let
$\alpha=(\alpha_1,...,\alpha_{1r_0})\in\mathcal C^\infty(\mathbb R,  S^1)$, with $\alpha_i=(k_{i1},...,k_{i r_0})$ for $i=1,...,r_0$ homogeneous of degree $1$ in $\xi$. By homogeneity,
	\begin{align}
	k\in\mathcal C^\infty(\mathbb R, S(\vartheta,g)) \  \Leftrightarrow\alpha=O(\beta_j),
	\end{align}
	for every $j=2,...,r_0$.

	Assume that $\alpha=O(\beta_j)$ and $(\nu,\rho)\in\partial \Sigma _1$, $\rho\neq 0$. Choose $\Sigma\setminus \Sigma_2\ni \nu_l\rightarrow \nu$ such that $(\nu-\nu_l)\lvert \nu-\nu_l\rvert^{-1}\rightarrow \rho/\lvert \rho\rvert$, $l\rightarrow\infty$. 
	Let us define
	\begin{align}\label{gamma1 generalized}
	\gamma_s^{ij}(\nu,\rho):=\underset{\nu_l\rightarrow \nu}{\lim} \frac{k_{ij}}{\beta_s}(\nu_l)\ \  \text{for}\ \ s\in\{2,...,r_0\}\ \  \text{and}\ \ 1\leq i,j\leq r_0. 
	\end{align}
    Since $\alpha=O(\beta_j)$ does not depend on $\tau$, so the above definition is independent of the choice of $\nu_l$.
	We get 
	\begin{align}
	\partial\mathcal N_{P}^1(\nu,\rho)=\ker((\gamma_s^{ij}(\nu,\rho))_{1\leq i,j\leq r_0}),\ \forall s\in\{2,...,r_0\},
	\end{align}
	where $(\gamma_s^{ij}(\nu,\rho))_{1\leq i,j\leq r_0}$ denote the matrix with entries $\gamma_s^{ij}(\nu,\rho)$ for $1\leq i,j\leq r_0$.
	\begin{align}
	\partial \mathcal N_{P}^s(\nu,\rho)=\ker(-\operatorname{Id}_{r_0}+(\gamma_s^{ij}(\nu,\rho))_{1\leq i,j\leq r_0})\ \text{for} \ s\in\{2,...,r_0\}.
	\end{align}
	It is easy to see that the condition $(\ref{2.7 double refraction})$ is satisfied.
	
	On the other hand, assume that $\alpha\neq O(\beta_j)$ at $\nu\in\Sigma _2$. Then, there exists a sequence $\nu_l=(t_l,x_l;0,\xi_l)\rightarrow \nu$, such that 
	\begin{align}
	\lvert \alpha(\nu_l)\rvert >l \lvert \beta_j(\nu_l)\rvert,  \ \ \forall l\in\mathbb N.
	\end{align}
	It is no restriction to assume that $\{(\nu-\nu_l)\lvert \nu-\nu_l\rvert^{-1}\}$ has a limit $0\neq \rho\in\partial \Sigma _1|_{\nu}$ as $l\rightarrow \infty$, and that 
	\begin{align}
	\varepsilon^{i,j}=\underset{l\rightarrow\infty}{\lim}k_{ij}(\nu_l)/\lvert\alpha(\nu_l)\rvert \ \ \text{exists}.
	\end{align}
	Since $\beta_j(\nu_l)/\lvert\alpha(\nu_l)\rvert\rightarrow 0$, we get that
	\begin{align}
	\partial\mathcal N_{P}^s(\nu,\rho)\supseteq \ker((\varepsilon^{i,j})_{1\leq i,j\leq r_0})\ \ \ \text{for}\ s=1,...,r_0.
	\end{align}
	Now, we want to show that $\ker((\varepsilon^{i,j})_{1\leq i,j\leq r_0})\neq\{0\}$. 
	Since, we have $\det k=0$, we get $\det \big((\varepsilon^{i,j})_{1\leq i,j\leq r_0}\big)=0$. Hence, we get that the rank of $(\varepsilon^{i,j})_{1\leq i,j\leq r_0}$ is less than or equal to $r_0-1$, which gives in turn that $\dim \ker ((\varepsilon^{i,j})_{1\leq i,j\leq r_0})$ is greater than or equal to $1$.
\end{proof}

 We will introduce two spaces that we are going to use. These spaces were also given by Dencker in \cite{denckertransversaltype} and he showed the relation between these two spaces by a lemma that we will also state.
Let $H^{r,s}$ be the space of $u\in\mathcal S'$ (here $u$ depends on $t$ and $x$) satisfying
\begin{align}\label{3.29 double}
\lVert u\rVert ^2_{r,s}=(2\pi)^{-n}\int\lvert \hat{u}(\tau,\xi)\lvert ^2\langle(\tau,\xi)\rangle^{2r}\langle(\tau,\xi')\rangle^{2s} d\tau d\xi<\infty.
\end{align}
We say that $u\in H^{r,s}$ at $\nu\in T^*\mathbb R^n\setminus 0$, that is, $\nu\notin\operatorname{WF}^{r,s}(u)$, if $u=u_1+u_2$, where $u_1\in H^{r,s}$ and $\nu\notin \operatorname{WF}(u_2)$. 

Similarly, when we have $u$ depends only on $x$ then the norm becomes
\begin{align}
\lVert u\rVert ^2_{r,s}=(2\pi)^{-n}\int\lvert \hat{u}(\xi)\lvert ^2\langle\xi\rangle^{2r}\langle \xi'\rangle^{2s} d\xi<\infty.
\end{align}
We have $S^0\subset S(1,g)$. Note that the spaces $H^{r,s}$ is a particular case of the spaces $B_{p,k}$ introduced by H\"{o}rmander; check \cite{hormander2}, where $p=2$ and $k(\tau,\xi)=\langle \tau,\xi \rangle^r \langle \tau,\xi'\rangle^s$.
	\begin{prop}\label{prop 3.7 generalized}
		Assume that $P$ is an $r_0\times r_0$ system of pseudodifferential operators of order $1$ on $\mathbb R^n$, on the form $(\ref{form normal})$, with $K\in \mathcal C^\infty(\mathbb R, \operatorname{Op} (S(\vartheta,g)))$  near $\nu_0\in \Sigma _2$. Let $u\in\mathcal S'(\mathbb R^n,\mathbb C^{r_0})$ and assume $Pu\in H^{r,s}$ at $\nu_0$. Then, for every $\delta>0$ we can find $c_{\delta}$ and $C_{\delta,N}>0$ and $v_\delta\in H^{r,s+1}$ at $\nu_0$, such that $u_\delta=u-v_\delta$ satisfies
		\begin{align}\label{3.30}
		\lvert \hat {u}_\delta(\tau,\xi)\rvert \leq C_{\delta,N}\langle(\tau,\xi)\rangle^{-N}, \ \ \ \forall N,
		\end{align}
		when $\lvert\tau\rvert >c_\delta(\langle\xi \rangle^\delta+\langle \xi' \rangle)$.
	\end{prop}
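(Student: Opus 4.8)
The plan is to follow the microlocal elliptic-estimate strategy that Dencker used for systems of transverse type (Proposition 3.7 in \cite{denckertransversaltype}), now relying on the fact that for a system of generalized transverse type in normal form we have $K\in\mathcal C^\infty(\mathbb R,\operatorname{Op}(S(\vartheta,g)))$ by Proposition \ref{prop1 generalized}. The key point is that in the region $\lvert\tau\rvert>c_\delta(\langle\xi\rangle^\delta+\langle\xi'\rangle)$ the operator $P=\operatorname{Id}_{r_0}D_t+K(t,x,D_x)$ behaves like an elliptic operator \emph{in the $\tau$ direction}: since the eigenvalues of $k=\sigma(K)$ are $0,\beta_2,\dots,\beta_{r_0}$ and $\lvert\beta_j\rvert\le C\langle\xi'\rangle$, the full symbol $\tau\operatorname{Id}_{r_0}+k$ is invertible with inverse of size $O(\lvert\tau\rvert^{-1})$ once $\lvert\tau\rvert$ dominates $\langle\xi'\rangle$; the extra $\langle\xi\rangle^\delta$ buffer absorbs the lower-order term in the expansion of $K$ (which lies in $\mathcal C^\infty(\mathbb R,S(1,g))$, hence is $O(\langle\xi'\rangle)=O(\langle\xi\rangle)$ but crucially $o(\lvert\tau\rvert)$ in the cone) and the error terms coming from the Weyl/Kohn--Nirenberg composition calculus for the classes $S(\vartheta,g)$.

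Concretely, I would proceed as follows. First, cut off: choose $\chi\in S^0$ supported in a small conic neighborhood of $\nu_0$, equal to $1$ near $\nu_0$, and reduce to estimating $\chi(x,D)u$; outside such a neighborhood $u$ is already controlled by the hypothesis $Pu\in H^{r,s}$ at $\nu_0$ plus the wavefront-set splitting $u=u_1+u_2$ in the definition of $H^{r,s}$ at a point. Second, introduce a symbol $\psi_\delta(\tau,\xi)$ that is $1$ on $\{\lvert\tau\rvert>2c_\delta(\langle\xi\rangle^\delta+\langle\xi'\rangle)\}$ and $0$ on $\{\lvert\tau\rvert<c_\delta(\langle\xi\rangle^\delta+\langle\xi'\rangle)\}$; one checks $\psi_\delta$ lies in a suitable symbol class adapted to the metric $g$ (this requires noting that on the support of $d\psi_\delta$ one has $\langle(\tau,\xi)\rangle\approx\langle\xi\rangle$ up to the $\delta$-loss, so derivatives in $\tau$ cost $\langle\xi\rangle^{-\delta}$ and derivatives in $\xi$ behave as dictated by $g$). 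Third, on the support of $\psi_\delta$ construct a left parametrix $Q$ for $P$: since $\tau\operatorname{Id}_{r_0}+k(t,x,\xi)$ is invertible there with $(\tau\operatorname{Id}+k)^{-1}=O(\lvert\tau\rvert^{-1})$ and symbol estimates improving by $\langle\xi\rangle^{-\delta}$ at each order, a standard Neumann-series construction in the calculus of $S(\vartheta,g)$ (or directly in H\"ormander's $B_{p,k}$ framework, as Dencker does) gives $QP=\psi_\delta(x,D)+R$ with $R$ of order $-\infty$ on the relevant region. Fourth, apply $Q$ to $Pu\in H^{r,s}$ to conclude that $\psi_\delta(x,D)u$ — hence the high-$\tau$ part of $u$ — gains one derivative in the $\langle(\tau,\xi')\rangle$ scale, i.e. lies in $H^{r,s+1}$ there, modulo a smooth remainder; writing $v_\delta$ for this $H^{r,s+1}$ piece and $u_\delta=u-v_\delta$ gives the rapid decay \eqref{3.30} on the slightly smaller cone, and adjusting $c_\delta$ absorbs the factor $2$.

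The main obstacle I anticipate is the bookkeeping for the metric $g$ in \eqref{metric generalized uniaxial} when the extra variable $\tau$ is present: one must verify that the cutoff $\psi_\delta$ and the parametrix $Q$ genuinely lie in symbol classes that are $\sigma$-temperate and slowly varying with respect to a metric on $T^*\mathbb R^n$ (with $n$ now counting the $t$-variable), so that the composition and $L^2$-boundedness theorems of the Weyl calculus apply; the role of the auxiliary exponent $\delta>0$ is precisely to create enough room between $\lvert\tau\rvert$ and $\langle\xi'\rangle$ that the $\tau$-elliptic inversion does not spoil these metric estimates, and getting the quantitative dependence of $c_\delta$ and the constants on $\delta$ right is the delicate part. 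Everything else is a routine adaptation of Dencker's transverse-type argument, the only structural change being that $k$ is now $r_0\times r_0$ with $r_0$ eigenvalues $0,\beta_2,\dots,\beta_{r_0}$ rather than the $2\times 2$ case, which does not affect the $\tau$-ellipticity that drives the proof.
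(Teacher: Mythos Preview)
Your proposal is correct and is precisely the approach the paper takes: the paper's entire proof is the one-line instruction ``Follow the proof of Proposition~A.1 in \cite{denckertransversaltype},'' and your outline spells out exactly that argument---$\tau$-ellipticity of $\tau\operatorname{Id}_{r_0}+k$ in the cone $\lvert\tau\rvert>c_\delta(\langle\xi\rangle^\delta+\langle\xi'\rangle)$, parametrix construction in the adapted symbol classes, and the observation that passing from $2\times 2$ to $r_0\times r_0$ changes nothing structural since the only input is $\lvert\beta_j\rvert\le C\langle\xi'\rangle$. The one cosmetic slip is your reference to ``Proposition~3.7'' in Dencker's transverse-type paper; the relevant result there is Proposition~A.1 in the appendix.
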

	\begin{proof}
		Follow the proof of Proposition $A.1$ in \cite{denckertransversaltype}. 
	\end{proof}
Let $H_{*}^{r,s}$ be the Banach space of $u\in \mathcal S'$, satisfying 
\begin{align}
(\lVert u\rVert^*_{r,s})^2=(2\pi)^{-n}\int\lvert \hat {u}(\tau,\xi)\lvert^2\langle\xi \rangle^{2r}\langle \xi'\rangle^{2s} d\tau d\xi<\infty.
\end{align}
If we have $u$ depends only on $x$ then the norm becomes 
\begin{align}
(\lVert u\rVert^*_{r,s})^2=(2\pi)^{-n}\int\lvert \hat {u}(\xi)\lvert^2\langle\xi \rangle^{2r}\langle \xi'\rangle^{2s}  d\xi<\infty.
\end{align}
Hence, $\lVert u\rVert^*_{r,s}=\lVert u\rVert_{r,s}$ when $u$ depends only on $x$.

If $u\in H^*_{r,s}$ then we get $u|_{t=\rho}\in H^{r,s}$ for almost all $\rho$, by Fubini's theorem. If $u\in\mathcal S'$ satisfies $(\ref{3.30})$, then 
\begin{align}
\lVert u\rVert^*_{r-\delta s_{-},s}\leq C_{r,s}(\lVert u \rVert_{r,s}+1)\leq C'_{r,s}(\lVert u\rVert^*_{r+\delta s_{+},s}+1),\ \forall r,s\in\mathbb R,
\end{align}
where $s_{\pm}=\max(\pm s,0)$. Hence, we lose only $O(\delta)$ derivatives when taking restriction to $\{t=r\}$, for almost all $r$.
\begin{defn}
	Let $u\in\mathcal S'(\mathbb R^n)$, and assume $\xi\neq 0$ in $\operatorname{WF}(u)$. We say that $u\in H_*^{r,s}$ at $(t_0,x_0,\xi_0)$, that is, $(t_0,x,_0,\xi_0)\notin \operatorname{WF}_*^{r,s}(u)$, if there exists $\phi(t,x,\xi)\in\mathcal C^\infty(\mathbb R, S^0_{1,0})$ such that $\phi(t,x,D_x)u\in H_*^{r,s}$ and $\underline{\lim}_{\lambda\rightarrow\infty}\lvert \phi(t_0,x_0,\lambda \xi_0)\rvert\neq 0$.
\end{defn}
We have 
\begin{align}\label{3.35}
(t_0,x_0,\xi_0)\notin \operatorname{WF}_*^{r,s}(u)\Rightarrow (x_0,\xi_0)\notin\operatorname{WF}^{r,s}(u_\rho),
\end{align}
for almost all $\rho$ close to $t_0$, where $u_\rho=u|_{t=\rho}$. If $\xi\neq 0$ in $\operatorname{WF}(u)$, then from \cite[Lemma\  2.3]{denckersingprincipaltype}, we get that 
\begin{align}
\pi_0(\operatorname{WF}^{r,0})(u)=\operatorname{WF}_*^{r,0}(u),
\end{align}
where $\pi_0(t,x;\tau,\xi)=(t,x,\xi)$. The following lemma gives the result for the more general wavefront sets:
\begin{lemma}\label{lemma 3.9 uniaxial}
	Assume that $u\in \mathcal S'(\mathbb R^n)$ satisfies $(\ref{3.30})$. Then $Au$ satisfies $(\ref{3.30})$, for any $A\in\mathcal C^\infty(\mathbb R, \Psi^m_{\delta,0})$ $\forall\  m$ and $\forall \ \delta>0$. We also obtain 
	\begin{align}
	\operatorname{WF}_*^{r-\delta s_{-},s}(u)\subseteq \pi_0(\operatorname{WF}^{r,s}(u))\subseteq \operatorname{WF}_*^{r+\delta s_{+},s}(u),
	\end{align}
	where $s_{\pm}=\max(\pm s,0)$ and $\pi_0(t,x;\tau,\xi)=(t,x,\xi)$. Since $u\in\mathcal C^\infty$ in $\pi_0^{-1}(\Sigma_2)\setminus \Sigma_2$ by $(\ref{3.30})$, we find 
	\begin{align}\label{3.37 uniaxial}
	\iota_0 (\operatorname{WF}_*^{r-\delta s_{-},s}(u))\subseteq \operatorname{WF}^{r,s}(u)\subseteq \iota_0(\operatorname{WF}_*^{r+\delta s_{+},s}(u))\ \ \ \ \text{on}\  \Sigma_2,
	\end{align}
	where $\iota_0(t,x,\xi)=(t,x;0,\xi)$.
\end{lemma}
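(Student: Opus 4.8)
The plan is to follow Dencker's argument for the scalar case, \cite[Lemma 2.3]{denckersingprincipaltype}, isolating two ingredients: that the rapid-decay property $(\ref{3.30})$ is stable under the $(\delta,0)$-calculus acting in the $x$-variables, and that, for distributions satisfying $(\ref{3.30})$, the $(\tau,\xi)$-weights defining $H^{r,s}$ and the $\xi$-only weights defining $H_*^{r,s}$ are comparable up to the losses $\langle\xi\rangle^{\pm\delta s_{\pm}}$. I work under the standing assumption that $\xi\neq 0$ on $\operatorname{WF}(u)$, which is what makes $\operatorname{WF}_*^{r,s}$ meaningful; recall also that by $(\ref{3.30})$ itself $u$ is already smooth on $\pi_0^{-1}(\Sigma_2)\setminus\Sigma_2$.

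First I would prove that $Au$ satisfies $(\ref{3.30})$. Since this is microlocal, I may assume the symbol $a(t,x,\xi)$ of $A$ is compactly supported in $(t,x)$, so that its partial Fourier transform $\mathcal F_{(t,x)}a(\theta,\zeta,\eta)$ is rapidly decreasing in $\theta$ and satisfies, at worst, $|\mathcal F_{(t,x)}a(\theta,\zeta,\eta)|\le C_N\langle\theta\rangle^{-N}\langle\zeta\rangle^{-N}\langle\eta\rangle^{m+\delta N}$ for every $N$, the $\langle\eta\rangle$-loss coming from integrating by parts $N$ times in $x$. Then
\begin{align}
\widehat{Au}(\tau,\xi)=(2\pi)^{-n}\int \mathcal F_{(t,x)}a(\tau-\sigma,\xi-\eta,\eta)\,\hat u(\sigma,\eta)\,d\sigma\,d\eta,
\end{align}
and, modulo a Schwartz summand, $\hat u$ is tempered and supported in $\{|\sigma|\le c_\delta(\langle\eta\rangle^\delta+\langle\eta'\rangle)\}$. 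I would split the $\eta$-integral into $|\eta|\le|\xi|/2$ and $|\eta|>|\xi|/2$: in the first region $\langle\xi-\eta\rangle\gtrsim\langle\xi\rangle$ absorbs all powers of $\langle\xi\rangle$ and $\langle\eta\rangle$, while $|\tau-\sigma|\gtrsim|\tau|$ once $|\tau|>c_\delta'(\langle\xi\rangle^\delta+\langle\xi'\rangle)$ with $c_\delta'$ slightly larger than $c_\delta$; in the second region $\langle\eta\rangle\approx\langle\xi\rangle$ and $\langle\eta'\rangle\le\langle\xi'\rangle+\langle\xi-\eta\rangle$, so either $\langle\xi-\eta\rangle$ is large and its negative powers dominate, or $|\sigma|\le c_\delta(\langle\xi\rangle^\delta+\langle\xi'\rangle)+O(\langle\xi-\eta\rangle)$ and again $|\tau-\sigma|\gtrsim|\tau|$, using $\delta<1$ so that $\langle\xi\rangle^\delta$ stays negligible against $|\tau|$. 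In every case the integrand is $O(\langle(\tau,\xi)\rangle^{-N})$, so $Au$ satisfies $(\ref{3.30})$ with the constant $c_\delta'$, which is of the required form.

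Next I would record the weight comparison. From $(\ref{3.30})$, modulo a Schwartz term $\operatorname{supp}\hat u\subseteq\{|\tau|\le c_\delta(\langle\xi\rangle^\delta+\langle\xi'\rangle)\}$, and on that set (for $\delta<1$) one checks $\langle\xi\rangle\le\langle(\tau,\xi)\rangle\le C_\delta\langle\xi\rangle$ and $\langle\xi'\rangle\le\langle(\tau,\xi')\rangle\le C_\delta\langle\xi\rangle^\delta\langle\xi'\rangle$, hence
\begin{align}
c\,\langle\xi\rangle^{2(r-\delta s_-)}\langle\xi'\rangle^{2s}\le\langle(\tau,\xi)\rangle^{2r}\langle(\tau,\xi')\rangle^{2s}\le C\,\langle\xi\rangle^{2(r+\delta s_+)}\langle\xi'\rangle^{2s}.
\end{align}
Integrating $|\hat u|^2$ against these weights gives exactly the displayed inequality between $\|\cdot\|_{r,s}$ and $\|\cdot\|^*_{r,s}$ recorded before the definition of $\operatorname{WF}_*^{r,s}$; together with the first step this shows that for $B\in\mathcal C^\infty(\mathbb R,\Psi^0_{\delta,0})$ one has $Bu\in H^{r,s}$ iff $Bu\in H_*^{r-\delta s_-,s}$, up to $O(\delta)$ in the first index — globally, and hence also microlocally, since microlocal membership is tested against such $B$.

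Finally, for the microlocal inclusions: given $(t_0,x_0,\xi_0)\notin\pi_0(\operatorname{WF}^{r,s}(u))$, by $(\ref{3.30})$ the part of $\operatorname{WF}^{r,s}(u)$ over a small conic neighborhood of $\xi_0$ lies in the cone $\{|\tau|\le c_\delta(\langle\xi\rangle^\delta+\langle\xi'\rangle)\}$, whose set of $\tau$-directions over $(t_0,x_0,\xi_0)$ is compact after conic rescaling; covering it by finitely many $\tau$-localizers, all based at points outside $\operatorname{WF}^{r,s}(u)$, I would build $\Phi\in\Psi^0$ elliptic on $\pi_0^{-1}(t_0,x_0,\xi_0)$ intersected with that cone and with $\Phi u\in H^{r,s}$. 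Setting $\phi(t,x,\xi)=\Phi(t,x;0,\xi)$ (viewed as $\tau$-independent), $(\ref{3.30})$ shows that removing the $\tau$-cutoff changes nothing on $\operatorname{supp}\hat u$ near $\xi_0$, so $\phi(t,x,D_x)u-\Phi(t,x,D_t,D_x)u$ has rapidly decreasing Fourier transform there; hence $\phi(t,x,D_x)u\in H^{r,s}$ microlocally at $(t_0,x_0,\xi_0)$, and by the previous two steps $\phi(t,x,D_x)u\in H_*^{r-\delta s_-,s}$ there, i.e.\ $(t_0,x_0,\xi_0)\notin\operatorname{WF}_*^{r-\delta s_-,s}(u)$. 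The reverse inclusion is symmetric, using the other half of the weight comparison and that a $\tau$-independent localizer composed with a $\tau$-cutoff adapted to $(\ref{3.30})$ still captures all of $\operatorname{WF}(u)$; and $(\ref{3.37 uniaxial})$ follows by restricting both inclusions to $\Sigma_2=\{\tau=0,\ \xi'=0\}$, on which $\pi_0$ and $\iota_0$ are mutually inverse. The main obstacle will be the frequency-space bookkeeping in the first step — fixing the enlarged constant $c_\delta'$ and taming the region $|\eta|\gtrsim|\xi|$ with $\langle\xi-\eta\rangle$ only moderate, where $\delta<1$ must be used to beat the $\langle\eta\rangle^{m+\delta N}$ loss — together with making precise the ``compactness after rescaling'' of the relevant $\tau$-directions that legitimizes passing from $\Phi$ to the $\tau$-independent $\phi(t,x,D_x)$.
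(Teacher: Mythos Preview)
Your proposal is essentially correct and, in fact, provides far more detail than the paper itself: the paper's proof consists of the single line ``See \cite[Lemma~A.3]{denckertransversaltype}.'' What you have sketched is a faithful reconstruction of Dencker's argument --- the stability of $(\ref{3.30})$ under the $(\delta,0)$-calculus via frequency-space splitting, the weight comparison on the cone $\{|\tau|\le c_\delta(\langle\xi\rangle^\delta+\langle\xi'\rangle)\}$, and the passage from $\tau$-dependent to $\tau$-independent localizers exploiting that the relevant $\tau$-directions are compact after conic rescaling.

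One small correction: for $a\in S^m_{\delta,0}$ in H\"ormander's convention (gain $\delta$ from $\xi$-derivatives, no loss from $x$-derivatives), integrating by parts in $x$ costs nothing in $\eta$, so the bound on $\mathcal F_{(t,x)}a$ is simply $C_N\langle\theta\rangle^{-N}\langle\zeta\rangle^{-N}\langle\eta\rangle^{m}$, not $\langle\eta\rangle^{m+\delta N}$; your pessimistic bound still works, but the argument is cleaner without it. Also note that the paper cites \cite[Lemma~A.3]{denckertransversaltype} rather than \cite[Lemma~2.3]{denckersingprincipaltype}; the latter handles the case $s=0$, while the former is the anisotropic extension you are actually proving.
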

\begin{proof}
	See \cite[Lemma A.3]{denckertransversaltype}.
\end{proof}
Note that $H^{r}=H^{r,0}$ is the usual Sobolev space.
Changing the notation, let $x_1=t$, $x'=(x_2,...,x_{d_0})$, then $x=(x_1,x',x'')\in\mathbb R\times \mathbb R^{d_0-1}\times \mathbb R^{n-d_0}$. Introduce the symbol classes $S^{r,s}=S(\langle \xi \rangle^r h^{-s},g)$ where $h^{-2}=1+\lvert\xi_1\rvert^2+\lvert\xi'\rvert^{2}$ and $\langle\xi \rangle$ are weights for the metric $g$ defined by 
\begin{align}
g_{x,\xi}(dx,d\xi)=\lvert dx\rvert ^2+\lvert d\xi\rvert ^2 h^2.
\end{align}
	Let $\Psi^{r,s}=\operatorname{Op} S^{r,s}$ be the corresponding pseudodifferential operators, which maps $H^{r,s}$ into $L^2$. Returning to the old notation where using $t$ instead of $x_1$, and assume that $P\in \mathcal C^\infty(\mathbb R, \operatorname{Op} (S(\vartheta,g)))$ be as in $(\ref{form normal})$, we get $P\in \Psi^{0,1}$.
	
	In order to prove the main theorem, we need to study the regularity of a Cauchy problem that we will state. Consider the following $N\times N$ system 
	\begin{align}
	Q=q \operatorname{Id}_N +\sum_{i=0}^{r_0-1}Q_i.
	\end{align}
	Here $q$ is a scalar operator with symbol
	\begin{align}
	q(t,x;\tau,\xi)=\tau\prod_{j=2}^{r_0}(\tau+\beta_j),
	\end{align}
	where $\beta_j\in S(\vartheta,g)$ is homogeneous and satisfies $(\ref{betaj})$. We will assume 
	\begin{align}
	Q_i=\sum_{k=0}^{i}A_{i-k}^iD_t^k,
	\end{align}
	with $A_{k'}^i\in\operatorname{Op} S(\vartheta^{k'},g)$. We are going to study the following Cauchy Problem:
	\begin{align}\label{cauchy1}
	\begin{split}
	Qu&=f\\
	D_t^k u|_{t=0}&=u_k,\ \text{for}\ k=0,...,r_0-1.
	\end{split}
	\end{align}
	We are going to assume that the $\xi\neq 0$ in $\operatorname{WF}(u)$. Hence, the restrictions are well defined.
	\begin{prop}\label{prop 7.1}
		Assume that $u\in\mathcal D'(\mathbb R^n,\mathbb C^N)$ satisfies $(\ref{cauchy1})$, and $\xi\neq 0$ in $\operatorname{WF}(u)$. If $u_k\in H^{r,s-k}$ at $(x_0,\xi_0)$ for $k=0,...,r_0-1$, $f\in H_*^{r,s-r_0+1}$ at $(t,x_0,\xi_0)$ for $0\leq t\leq t_0$, and $\xi_0'=0$, then $u\in H_*^{r,s}$ at $(t_0,x_0,\xi_0)$.
	\end{prop}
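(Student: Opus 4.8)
The plan is to reduce the scalar-coefficient Cauchy problem \eqref{cauchy1} to a first-order $r_0\times r_0$ system of the normal form \eqref{form normal} treated in Proposition \ref{prop1 generalized}, and then iterate the energy-type estimate for such systems. First I would factor the principal symbol $q = \tau\prod_{j=2}^{r_0}(\tau+\beta_j)$: since the $\beta_j$ satisfy the separation condition \eqref{betaj}, the factors $\tau$, $\tau+\beta_2,\dots,\tau+\beta_{r_0}$ are, microlocally near $\nu_0$, the symbols of pairwise-"non-characteristic" first-order operators, and $q$ together with the lower-order perturbations $\sum_{i=0}^{r_0-1}Q_i$ (whose coefficients lie in $\operatorname{Op}S(\vartheta^{k'},g)$, hence are lower order in the $\vartheta$-calculus) can be written, after introducing $U=(u,D_tu,\dots,D_t^{r_0-1}u)$, as $(D_t\operatorname{Id}_{r_0}+K(t,x,D_x))U = F$ modulo $\mathcal C^\infty$, with $\sigma(K)$ having eigenvalues $0,\beta_2,\dots,\beta_{r_0}$ and $\sigma(K)\in\mathcal C^\infty(\mathbb R,S(\vartheta,g))$ by construction. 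The Cauchy data $u_k\in H^{r,s-k}$ at $(x_0,\xi_0)$ translate into $U|_{t=0}\in H^{r,s}$ at $(x_0,\xi_0)$ after the usual shift of Sobolev indices induced by the $D_t^k$, and $f\in H_*^{r,s-r_0+1}$ gives $F\in H_*^{r,s}$ at $(t,x_0,\xi_0)$ for $0\le t\le t_0$ (note the componentwise losses cancel because of the $\vartheta$-weights attached to the lower-order blocks).

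Next I would microlocalize: choose $\phi(t,x,\xi)\in\mathcal C^\infty(\mathbb R,S^0_{1,0})$ supported in a small conic neighborhood of $(t,x_0,\xi_0)$ for $0\le t\le t_0$, elliptic at $(x_0,\xi_0)$ along $t\in[0,t_0]$, and commute it through $D_t\operatorname{Id}_{r_0}+K$; the commutator $[\,\phi,D_t\operatorname{Id}_{r_0}+K\,]$ is of order one lower in $\vartheta$ and supported where $\phi$ is not constant, so it contributes a term that is already one step better in the $s$-index, to be absorbed by induction on $s$ (the base case $s$ very negative being trivial since $U\in\mathcal D'$). Applying Proposition \ref{prop 3.7 generalized} to $(D_t\operatorname{Id}_{r_0}+K)(\phi U)$ produces, for each $\delta>0$, a decomposition $\phi U = v_\delta + w_\delta$ with $v_\delta\in H^{r,s+1}$ at $(x_0,\xi_0)$ and $w_\delta$ rapidly decaying in the region $|\tau|>c_\delta(\langle\xi\rangle^\delta+\langle\xi'\rangle)$; this confines all the remaining content of $\phi U$ to the region $|\tau|\lesssim\langle\xi'\rangle+\langle\xi\rangle^\delta$, where $\langle(\tau,\xi)\rangle\approx\langle\xi\rangle$ and $\langle(\tau,\xi')\rangle\approx\langle\xi'\rangle$ up to $O(\delta)$ losses, so that $\operatorname{WF}^{r,s}$-membership of $\phi U$ is equivalent to $\operatorname{WF}_*^{r,s}$-membership by Lemma \ref{lemma 3.9 uniaxial}.

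The heart of the argument is then the energy estimate for the first-order system $D_t\operatorname{Id}_{r_0}+K$ with $K\in\operatorname{Op}S(\vartheta,g)$: writing it in the $S^{r,s}$-calculus where $P\in\Psi^{0,1}$, one has an a priori estimate of the form
\begin{align}
\|(\phi U)(t_0)\|^*_{r,s} \;\le\; C\Big(\|(\phi U)(0)\|^*_{r,s} + \int_0^{t_0}\|(D_t\operatorname{Id}_{r_0}+K)(\phi U)(t)\|^*_{r,s}\,dt + (\text{lower order})\Big),
\end{align}
obtained by differentiating $\|\Lambda^{r,s}(\phi U)(t)\|_{L^2}^2$ in $t$, using that the commutator $[\Lambda^{r,s},K]\Lambda^{-r,-s}$ and the skew part of $K$ are bounded on $L^2$ (here the weight $\vartheta$ is exactly tuned so that $K$ maps $H^{r,s}$ to $H^{r,s}$ with no loss — this is the role of Proposition \ref{prop1 generalized}), and Grönwall. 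Feeding in $(\phi U)(0)\in H^{r,s}$ from the Cauchy data and the right-hand side $F\in H_*^{r,s}$, and handling the restriction-to-slices loss of $O(\delta)$ derivatives via the inequality relating $\|\cdot\|^*_{r,s}$ and $\|\cdot\|_{r,s}$ stated after the definition of $H^{r,s}_*$, gives $\phi U\in H_*^{r,s}$ at $(t_0,x_0,\xi_0)$; reading off the first component yields $u\in H_*^{r,s}$ at $(t_0,x_0,\xi_0)$. I expect the main obstacle to be the bookkeeping of the $O(\delta)$ losses from slice restrictions combined with the induction on the $s$-index: one must check that the commutator terms generated at each stage are genuinely one step better and that the $\delta$-losses accumulated over the finitely many induction steps can be made arbitrarily small, so that the final conclusion holds with the sharp index $s$ rather than $s-O(\delta)$ — this is exactly where the structure of Proposition \ref{prop 3.7 generalized} (valid for every $\delta>0$ simultaneously) is essential.
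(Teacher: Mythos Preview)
Your reduction to a first-order system is the right idea and matches the paper's opening move, but there is a genuine gap in the execution. With the unweighted vector $U=(u,D_tu,\dots,D_t^{r_0-1}u)$ the resulting companion matrix $K$ is \emph{not} in $S(\vartheta,g)$: the bottom-row entries are elementary symmetric functions of $\beta_2,\dots,\beta_{r_0}$, so the $j$th entry lies in $S(\vartheta^{r_0-j+1},g)$, and in particular the $(r_0,1)$-entry is of order $\vartheta^{r_0-1}$. Your claim ``$\sigma(K)\in\mathcal C^\infty(\mathbb R,S(\vartheta,g))$ by construction'' therefore fails, and with it the statement that $K$ acts with no loss in the $H^{r,s}$-scale; consequently neither the energy estimate nor the commutator bookkeeping closes as written. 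The paper fixes this by defining $v_k=\lambda^{k-1}D_t^{k-1}u$ with $\lambda\in\operatorname{Op}S(\vartheta^{-1},g)$ having symbol $\vartheta^{-1}$: the off-diagonal identities become $-\vartheta\operatorname{Id}_N$ and the bottom-row entries pick up compensating factors $\vartheta^{j-r_0}$, so that every entry of $\sigma(K)$ genuinely lies in $S(\vartheta,g)$. Simultaneously the data rescale to $V_0=(u_0,\lambda u_1,\dots,\lambda^{r_0-1}u_{r_0-1})\in H^{r,s}$ and the source to $F=(0,\dots,0,\lambda^{r_0-1}f)\in H_*^{r,s}$, which is the ``componentwise cancellation'' you allude to but do not carry out. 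Also note $u$ is $\mathbb C^N$-valued, so the first-order system is $r_0N\times r_0N$, not $r_0\times r_0$.

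After the (correctly weighted) reduction, the paper's route diverges from yours. It first conjugates by an elliptic scalar operator in $S(\langle\xi\rangle^r\vartheta^s,g)$ to reduce to $r=s=0$, then invokes a separate result (Proposition \ref{regularity proposition}) whose proof is not a direct energy bootstrap with induction on $s$: it combines the symmetrizer-based estimate (Lemma \ref{lemma 5.2}, essentially your Gr\"onwall step) with a parametrix construction and a microlocal uniqueness argument imported from Dencker's earlier work. Your microlocal-cutoff-plus-induction-on-$s$ scheme is a plausible alternative, but you would need to justify that the energy inequality applies to distributional $V$ (not just $V\in\mathcal C^\infty_0$) and that the commutator $[\phi,K]$ really gains a full unit in the $\vartheta$-index uniformly over the iteration; the paper sidesteps both issues via the parametrix.
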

	\begin{proof}
		By conjugating with an elliptic, scalar operator with symbol in $S(\langle\xi \rangle^r \vartheta^s,g)$, we may assume that $r=s=0$. We will reduce to a first order symmetric system. Let $v_k=\lambda^{k-1} D_t^{k-1} u$ for $k=1,...,r_0$, where $\lambda\in\operatorname{Op} S(\vartheta^{-1},g)$ has symbol $\vartheta^{-1}$. Hence, $v_k=\lambda D_t v_{k-1}$ for $k=2,...,r_0$. Then $V=\ ^t(v_1,...,v_{r_0})\in\mathcal D'(\mathbb R^n,\mathbb C^{r_0N} )$, $\xi\neq 0$ in $\operatorname{WF}(V)$, and $V$ satisfies 
		\begin{align}\label{0.23}
		\begin{split}
		PV&=F\\
		V|_{t=0}&=V_0.
		\end{split}
		\end{align}
		Here $P=\operatorname{Id}_{r_0N} D_t+K$, $F=(0,...,0,\lambda^{r_0-1} f)$, $V_0=(u_0,\lambda u_1,...,\lambda^{r_0-1}u_{r_0-1})$, and $K\in\operatorname{Op} S(\vartheta,g)$ has  principal symbol $k=(k_{ij})_{1\leq i,j\leq r_0}$ such that 
		\begin{align}
		&k_{ii+1}=-\vartheta\operatorname{Id}_N\ \text{for}\ 1\leq i\leq r_0-1,\\
		&k_{r_0j}=\sum \label{kr0j} \beta_{i_1}...\beta_{i_{r_0-j+1}}\vartheta^{j-r_0}\ \text{for}\ 2\leq j\leq r_0,\\
		&k_{ij}=0 \ \text{elsewhere}
		\end{align}  
		where the sum in $(\ref{kr0j})$ is such that $2\leq i_1\leq j$ and $ i_{k-1}+1\leq i_k\leq j+k-1$ for $2\leq k\leq r_0-j+1$.
		
		We find $V_0\in H^{0,0}=L^2$ at $(x_0,\xi_0)$ and $F\in H_*^{0,0}$ at $(t,x_0,\xi_0)$ for $0\leq t\leq t_0$. Thus the result follows from the next proposition, which we will state after the following Lemma.
	\end{proof}
	\begin{lemma}\label{lemma 5.2}
		When the above assumptions are satisfied we get 
		\begin{align}
		\int_{0}^{\varepsilon}\lVert V\rVert(t) dt\leq C\varepsilon^{1/2}\bigg(\int_{0}^{\varepsilon}\lVert D_tV+KV\rVert (t) dt+\lVert V\rVert(0)\bigg)
		\end{align}
		for $V\in \mathcal C^{\infty}_0(\mathbb R^{n+1}, \mathbb C^{r_0N})$, if $\varepsilon>0$ is sufficiently small. Here $\lVert .\rVert(t)$ denotes the $L^2$ norm in the $x$ variables, depending on $t$.
	\end{lemma}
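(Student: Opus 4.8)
I would prove the lemma by first reducing it to the \emph{uniform} energy estimate
\[
\sup_{0\le t\le\varepsilon}\lVert V\rVert(t)\ \le\ C\Big(\lVert V\rVert(0)+\int_0^{\varepsilon}\lVert D_tV+KV\rVert(t)\,dt\Big),
\]
with $C$ independent of $\varepsilon$ for $\varepsilon$ small: once this is known, $\int_0^{\varepsilon}\lVert V\rVert(t)\,dt\le\varepsilon\sup_{[0,\varepsilon]}\lVert V\rVert\le\varepsilon^{1/2}\sup_{[0,\varepsilon]}\lVert V\rVert$ as soon as $\varepsilon\le1$, which is exactly the claimed inequality. The obstruction to a naive $L^2$ energy identity is that the principal symbol $k\in S(\vartheta,g)$ of $K$ has order $\vartheta\approx\langle\xi'\rangle$ and is not skew-adjoint, so that $\tfrac{d}{dt}\lVert V\rVert^2$ carries the term $-i\big((K-K^{*})V,V\big)$ with $K-K^{*}\in\operatorname{Op}S(\vartheta,g)$, which is not $L^{2}$-bounded and cannot be absorbed into $\lVert V\rVert^2$. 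The plan is to remove this term by conjugating $K$ to an operator that is self-adjoint modulo $\operatorname{Op}S(1,g)$ (equivalently, by a microlocal symmetrizer).

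Write $1=\psi_0(\xi')+\psi_1(\xi')$ with $\psi_0$ supported in $\{\lvert\xi'\rvert\le2C_0\}$ and $\psi_1$ in $\{\lvert\xi'\rvert\ge C_0\}$, where $C_0$ is a large constant to be fixed. On $\{\lvert\xi'\rvert\le2C_0\}$ one has $\vartheta\approx1$, so $k\in S(1,g)$ there and the corresponding part of $K$ is $L^2$-bounded, contributing only an admissible $O(\lVert V\rVert^2)$-term to the energy identity. On $\{\lvert\xi'\rvert\ge C_0\}$ I would diagonalize: by $(\ref{betaj})$ the eigenvalues $0,\beta_2,\dots,\beta_{r_0}$ of $k$ are real and pairwise separated by $\ge c\lvert\xi'\rvert\approx\vartheta$, hence $k/\vartheta\in S(1,g)$ has real, uniformly bounded and uniformly separated eigenvalues $\beta_j/\vartheta$ there; the associated Riesz eigenprojections, and an invertible symbol $M$ with $M^{-1}kM=\kappa:=\operatorname{diag}(0,\beta_2,\dots,\beta_{r_0})\otimes\operatorname{Id}_N$, lie in $S(1,g)$ together with $M^{-1}$. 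Patching $M$ to equal $\operatorname{Id}_{r_0N}$ on $\{\lvert\xi'\rvert\le C_0\}$ yields a global $M\in S(1,g)$ with $M^{-1}\in S(1,g)$, and — choosing $C_0$ large enough that $M-\operatorname{Id}$ is supported where $h=\vartheta^{-1}$ is small — the quantization $\mathcal M=\operatorname{Op}M$ is invertible on $L^2$ with bounded inverse. The key point that makes the conjugation work is the calculus of the classes $S(\vartheta,g)$: every Poisson bracket gains a factor $h\approx\vartheta^{-1}$, which exactly cancels the order $\vartheta$ of $K$, so $M^{-1}\#k\#M-M^{-1}kM\in S(\vartheta h,g)=S(1,g)$ and likewise $\mathcal M^{-1}[D_t,\mathcal M]\in\operatorname{Op}S(1,g)$. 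Consequently, with $\tilde V:=\mathcal M^{-1}V$ and $\tilde W:=\mathcal M^{-1}(D_tV+KV)$,
\[
(D_t+\Lambda+B)\tilde V=\tilde W,\qquad \Lambda:=\operatorname{Op}\kappa,\quad B\in\operatorname{Op}S(1,g),
\]
where $\Lambda$ is self-adjoint because $\kappa$ is a real diagonal symbol and $\operatorname{Op}$ is the Weyl quantization (with another quantization, self-adjoint up to a further $\operatorname{Op}S(1,g)$ term absorbed into $B$), and $B$ is $L^2$-bounded.

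For the conjugated system the standard symmetric-hyperbolic argument applies. Using $\partial_t=iD_t$,
\[
\frac{d}{dt}\lVert\tilde V\rVert^2(t)=2\operatorname{Re}\big(i\tilde W,\tilde V\big)-2\operatorname{Re}\big(i\Lambda\tilde V,\tilde V\big)-2\operatorname{Re}\big(iB\tilde V,\tilde V\big),
\]
and the middle term vanishes since $\Lambda=\Lambda^{*}$ makes $(i\Lambda\tilde V,\tilde V)$ purely imaginary; hence $\tfrac{d}{dt}\lVert\tilde V\rVert^2\le2\lVert\tilde W\rVert\,\lVert\tilde V\rVert+C\lVert\tilde V\rVert^2$, i.e.\ $\tfrac{d}{dt}\lVert\tilde V\rVert\le\tfrac C2\lVert\tilde V\rVert+\lVert\tilde W\rVert$ (differentiability of $\lVert\tilde V\rVert$ handled by using $(\lVert\tilde V\rVert^2+\delta)^{1/2}$ and letting $\delta\downarrow0$). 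Grönwall's inequality on $[0,\varepsilon]$ gives $\sup_{[0,\varepsilon]}\lVert\tilde V\rVert\le e^{C\varepsilon/2}\big(\lVert\tilde V\rVert(0)+\int_0^{\varepsilon}\lVert\tilde W\rVert\,dt\big)$; transferring this back through the $L^2$-boundedness of $\mathcal M^{\pm1}$, and taking $\varepsilon$ small enough that $e^{C\varepsilon/2}\le2$ and $\varepsilon\le1$, produces the uniform energy estimate, and with it the lemma.

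The step I expect to be the main obstacle is precisely the conjugation near $\Sigma_2$: there the eigenvalues $\beta_j/\vartheta$ of $k/\vartheta$ all tend to $0$, so $k$ is not uniformly diagonalizable and no global symmetrizer built from its eigenstructure alone exists. The resolution is structural rather than technical — on $\{\lvert\xi'\rvert\lesssim1\}$ one has $\vartheta\approx1$, so there $K\in\operatorname{Op}S(1,g)$ is already a bounded perturbation needing no diagonalization — but matching the two regimes into a single $M\in S(1,g)$ that is the exact diagonalizer for large $\lvert\xi'\rvert$, the identity for bounded $\lvert\xi'\rvert$, invertible throughout, and with all calculus errors landing in $\operatorname{Op}S(1,g)$, is what requires the patching and the choice of cutoff radius described above.
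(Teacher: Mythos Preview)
Your proof is correct and follows the same overall strategy as the paper: build from the real-eigenvalue structure of $k$ a device that renders $K$ self-adjoint modulo $\operatorname{Op}S(1,g)$, then run the symmetric-hyperbolic energy identity and Gr\"onwall. The paper's implementation differs in one respect: instead of conjugating by a diagonalizer $\mathcal M$, it constructs a \emph{symmetrizer} $M=\sum_j\pi_j^*\pi_j\in S(1,g)$ (with $\pi_j$ the eigenprojections of $k$), so that $Mk$ is Hermitian at the symbol level and hence $MK-K^*M\in\operatorname{Op}S(1,g)$, and then differentiates the modified energy $\langle M V,V\rangle$ directly. Both routes exploit the same fact --- that the eigenprojections of $k/\vartheta$ lie in $S(1,g)$ where the eigenvalues are separated --- and both land on the same Gr\"onwall bound. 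Your cutoff in $\lvert\xi'\rvert$ makes explicit a point the paper leaves implicit (the projections $\pi_j$ are only defined for $\xi'\ne0$, but on $\{\lvert\xi'\rvert\lesssim1\}$ one has $\vartheta\approx1$ and $k\in S(1,g)$ anyway, so no symmetrization is needed there); and your $L^1$-in-time Gr\"onwall yields $\int_0^\varepsilon\lVert F\rVert\,dt$ on the right-hand side directly, whereas the paper's displayed inequality $\partial_t\lVert V\rVert_M^2\le C(\lVert V\rVert_M^2+\lVert F\rVert_M^2)$ formally produces an $L^2$-in-time norm of $F$ and tacitly relies on the sharper estimate $\langle MF,V\rangle+\langle MV,F\rangle\le C\lVert F\rVert_M\lVert V\rVert_M$ to match the stated lemma.
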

	\begin{proof}
		We are going to prove it in a similar way as the proof of Lemma $5.2$ in \cite{denckerconicalrefraction}. As $k$ is diagonalizable in $S(1,g)$, we get 
		that $k=\sum_{j=1}^{r_0}\beta_j \pi_j$, with $\beta_1=0$, 
		where $\pi_j(t,x,\xi)\in S(1,g)$ is the projection on the eigenvectors corresponding to the eigenvalues $\beta_j$ along the others when $\xi'\neq 0$. $k$ is symmetrizable with symmetrizer $M=\sum \pi_j^*\pi_j$, that is $M>0$ amd $Mk$ is symmetric. 
		Note that $k$ is symmetrizable means there exists symmetric $N\times N$ system $M(t,x,\xi)\in S(1,g)$ such that $0<c\leq M$ and $Mk-(Mk)^*\in S(1,g)$. If we put $\lVert V\rVert(t)$ to be the $L^2$ norm in the $x$-variables, depending on $t$, and we put 
		\begin{align}
		\lVert V\rVert_M^2(t)=\langle MV,V\rangle=\int \langle MV(t,x),\overline{V(t,x)}\rangle dx
		\end{align}
		then 
		\begin{align}\label{5.7}
		c\leq \frac{\lVert V\rVert_M^2(t)}{\lVert V\rVert^2(t)}\leq C
		\end{align}
		If $D_t V+KV=F$, we obtain 
		\begin{align}
		\begin{split}
		\partial_t\lVert V\rVert_M^2(t)&=\langle(\partial_t M-i(MK-K^*M))V,V\rangle(t)+\langle MF,V\rangle(t)
		+\langle MV,F\rangle(t)\\ &\leq C(\lVert V\rVert _M^2(t)+\lVert F\rVert_M^2(t)).
		\end{split}
		\end{align}
		By Gr\"{o}nwall's inequality we get, for bounded $t$,
		\begin{align}
		\lVert V\rVert_M^2(t)\leq C\bigg(\lVert V\rVert_M^2(0)+\int_{0}^{t}\lVert F\rVert_M^2(s) ds\bigg),
		\end{align}
		so $(\ref{5.7})$ and integration gives the result.
	\end{proof}
	\begin{prop}\label{regularity proposition}
		Let $P=D_t\operatorname{Id}_N+K$ where $K\in \operatorname{Op}S(\vartheta,g)$ has symbol which is diagonalizable in $S(1,g)$ with eigenvalues $0,\beta_2,...,\beta_{r_0}$ $\operatorname{mod}$ $S(1,g)$, and $\beta_j\in\mathcal C^\infty$ is homogeneous, satisfying $(\ref{betaj})$ for $j=2,...,r_0$. Assume that $V\in \mathcal D'(\mathbb R^n,\mathbb C^N)$, $\xi\neq 0$ in $\operatorname{WF}(V)$ and $V$ satisfies $(\ref{0.23})$. If $V_0\in L^2$ at $(x_0,\xi_0)$, $F\in H_*^{0,0}$ at $(t,x_0,\xi_0)$  for $0\leq  t\leq t_0$, and $\xi_0'=0$, then $V\in H_*^{0,0}$ at $(t_0,x_0,\xi_0)$. 	
		
		The condition on $k$ means that there exists a basis of eigenvectors $\{v_j\}\in S(1,g)$, with eigenvalues $0$, $\beta_2$,...,$\beta_{r_0}$ $\operatorname{mod}$ $S(1,g)$.
	\end{prop}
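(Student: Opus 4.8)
The statement is a microlocal regularity result for a first-order symmetric(izable) hyperbolic system; the natural route is energy estimates combined with a propagation-of-singularities argument along the characteristic flow. First I would microlocalize: since $\xi\neq0$ on $\operatorname{WF}(V)$, fix a conic neighborhood $\Gamma$ of $(x_0,\xi_0)$ and a cutoff $\chi(x,D_x)\in\operatorname{Op}S^0$ equal to $1$ near $(x_0,\xi_0)$, supported in $\Gamma$. The goal is then to propagate the $H^{0,0}_*$ regularity from $t=0$ (where $V_0\in L^2$ microlocally) forward in $t$ to $t_0$, using that $F\in H^{0,0}_*$ along the segment $\{(t,x_0,\xi_0):0\le t\le t_0\}$. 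Because $\xi_0'=0$, the relevant bicharacteristics of the factors $\tau+\beta_j$ all pass through $\Sigma_2$, and the weight $\vartheta=\langle\xi'\rangle$ stays bounded along them near $\nu_0$, so $H^{0,0}_*$ is the right scale.

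**Key steps.** The main tool is Lemma \ref{lemma 5.2}, the basic energy inequality $\int_0^\varepsilon\lVert V\rVert(t)\,dt\le C\varepsilon^{1/2}\big(\int_0^\varepsilon\lVert D_tV+KV\rVert(t)\,dt+\lVert V\rVert(0)\big)$, which comes from symmetrizability of $k$ via $M=\sum\pi_j^*\pi_j$ and Gr\"onwall, exactly as proved above. I would proceed as follows. (1) Reduce to the case where $V$ is compactly supported and the estimate applies: commute $\chi(x,D_x)$ past $P=D_t\operatorname{Id}_N+K$; since $K\in\operatorname{Op}S(\vartheta,g)$ and $\chi\in\operatorname{Op}S^0$, the commutator $[\chi,K]$ is lower order (in $S(1,g)$ after the reduction $r=s=0$, the weight $\vartheta$ is absorbed because differentiating in $\xi'$ gains $\langle\xi'\rangle^{-1}$), so $\chi V$ satisfies $P(\chi V)=\chi F+[\chi,K]V$ with a controllable error term. (2) Apply the energy estimate of Lemma \ref{lemma 5.2} (or rather its $L^2$-in-$x$, $L^1$-in-$t$ form) to $\chi V$, interpreted in the $H^{0,0}_*$ norm: the norm $(\lVert\cdot\rVert^*_{0,0})^2=(2\pi)^{-n}\int|\hat u|^2\,d\tau\,d\xi$ is just the $L^2(\mathbb R^{n})$ norm in $(t,x)$, and restriction to $\{t=\rho\}$ lies in $L^2_x$ for a.e. $\rho$ by Fubini, which is why one works with the $H^{0,0}_*$-scale and loses only $O(\delta)$ derivatives in $\xi'$ when restricting (cf. the discussion around \eqref{3.35} and Lemma \ref{lemma 3.9 uniaxial}). (3) Bootstrap: the term $[\chi,K]V$ involves $V$ only through a strictly smoother symbol, so a standard iteration (first gaining regularity from the a priori distributional order of $V$, then upgrading in integer or $\delta$-steps) closes the estimate and yields $\chi V\in H^{0,0}_*$, hence $(t_0,x_0,\xi_0)\notin\operatorname{WF}^{0,0}_*(V)$. (4) Finally undo the conjugation by the elliptic operator with symbol in $S(\langle\xi\rangle^r\vartheta^s,g)$ to recover the general $(r,s)$ statement.

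**Main obstacle.** The delicate point is that $P$ is not symmetric but only symmetrizable, and the symmetrizer $M=\sum\pi_j^*\pi_j$ degenerates as $\xi'\to0$ (the spectral projections $\pi_j$ are only defined, and only lie in $S(1,g)$, away from $\xi'=0$, since the eigenvalues $0,\beta_2,\dots,\beta_{r_0}$ collide on $\Sigma_2$). So the energy identity $\partial_t\lVert V\rVert_M^2=\langle(\partial_tM-i(MK-K^*M))V,V\rangle+\cdots$ must be justified with $M$ replaced by a genuine bounded, boundedly-invertible operator: one uses that $k\in S(1,g)$ (this is exactly where $k\in\mathcal C^\infty(\mathbb R,S(\vartheta,g))$ from Proposition \ref{prop1 generalized} enters, after scaling), that the symbol $Mk$ is symmetric modulo $S(1,g)\cdot h$, and sharp G\aa rding / Calder\'on--Vaillancourt to pass from symbol-level symmetry to operator-level positivity with controlled error. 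The subtlety is making the commutator $[\chi,K]V$ genuinely lower order on the correct scale despite the $\vartheta$-weight; this is handled by the structure of the metric $g$ in \eqref{metric generalized uniaxial}, for which $\vartheta=\langle\xi'\rangle$ is a $g$-weight, so that $\operatorname{Op}S(\vartheta,g)$ forms an algebra and commutators with $\operatorname{Op}S^0$ do not gain the weight back. Once these symbol-calculus bookkeeping points are in place, the rest is the standard hyperbolic energy-plus-bootstrap argument.
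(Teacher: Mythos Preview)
Your approach differs from the paper's. The paper does not run a direct energy-plus-commutator argument; it invokes a parametrix construction for $P=D_t\operatorname{Id}_N+K$ together with a microlocal uniqueness theorem (both adapted from Dencker's earlier papers, with the symbol classes adjusted to the present $(\vartheta,g)$), and Lemma~\ref{lemma 5.2} enters only to guarantee that the parametrix solution lies in $\mathcal C^\infty(\mathbb R,\mathcal D')\cap L^2$. In that route one represents $V$ explicitly as (parametrix applied to $F$) plus (Cauchy solution operator applied to $V_0$) modulo $\mathcal C^\infty$, and the microlocal $H^{0,0}_*$ regularity is read off from the mapping properties of these operators; no commutator bootstrap is needed. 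Your direct energy method is a reasonable alternative in principle, but two points in the sketch need correction.

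First, your ``main obstacle'' is misdiagnosed. You say $M=\sum\pi_j^*\pi_j$ degenerates as $\xi'\to0$ because the spectral projections blow up where the eigenvalues collide, but the hypothesis ``diagonalizable in $S(1,g)$'' --- as the statement itself spells out --- supplies a \emph{global} basis of eigenvectors $v_j\in S(1,g)$, so the $\pi_j$ (projections onto $\mathbb C v_j$ along the others) lie in $S(1,g)$ everywhere and $M$ is uniformly positive; Lemma~\ref{lemma 5.2} uses precisely this $M$ without difficulty. Second, the commutator step is not closed. You correctly note $[\chi,K]\in\operatorname{Op}S(1,g)$, but that is an order-zero operator: $[\chi,K]V$ then requires $L^2$ control of $V$ itself on the set where $d\chi\ne0$, i.e.\ on the boundary of your cutoff cone, where the hypotheses say nothing. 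A genuine bootstrap would need a nested family of cutoffs, an induction on the regularity index, and a finite-propagation-speed argument confining the bicharacteristics of each $\tau+\beta_j$ (which are transversal to one another, not tangent) to a slightly larger cone on $[0,t_0]$; none of this is set up in your outline. The parametrix-plus-uniqueness route in the paper sidesteps this issue entirely.
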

	The above proposition is similar to Proposition $7.2$ in \cite{denckerdoublerefraction}. To prove Proposition $7.2$ in \cite{denckerdoublerefraction}, Dencker used the parametrix constructed in \cite{denckerpropagationofsing} for $P=D_t\operatorname{Id}_N+K$, where $K\in \operatorname{Op}S(\vartheta,g)$ has principal symbol $k$ satisfying the conditions in Proposition $7.2$, with $\vartheta$, and $g$ are given as $(3.8)$, and $(3.9)$ in \cite{denckerdoublerefraction} respectively, and he used the microlocal uniqueness; see \cite{denckerpropagationofsing}. In our case; case of generalized transverse type, we are using different weight and metric, but we can still construct a parametrix for the $N_0\times N_0$ matrix, where $K\in \operatorname{Op}S(\vartheta,g)$ has principal symbol $k$ satisfying the conditions in Proposition $\ref{regularity proposition}$, and we can prove microlocal uniqueness as in \cite{denckerpropagationofsing}. The steps are similar to that in \cite{denckerpropagationofsing}, except some details are changed. Thus, we will not write about the construction of the parametrix and the microlocal uniqueness and we invite the readers to check \cite{denckerpropagationofsing}.
	\begin{proof}[Proof of Proposition \ref{regularity proposition}]
		The argument is all the same as in the proof of Proposition $7.2$ in \cite{denckerdoublerefraction}, except that to get $U\in \mathcal C^\infty(\mathbb R,\mathcal D'(\mathbb R^{n-1},\mathbb C^N))\cap L^2$, we need to prove $(5.6)$ in \cite{denckerconicalrefraction} for our case, which we proved in Lemma $\ref{lemma 5.2}$. For the parametrix and the microlocal uniqueness part we have mentioned above that it can be proven in our case in a similar way.	\end{proof}
Note that here $S_j$ are transverse at $\Sigma_2$, so their Hamiton fields are non-parallel on $\Sigma_2$. That is why we considered in the main theorem, Theorem $\ref{main theorem}$, that $M_A$ is a hypersurface near $\nu_0$, so one can consider $M_A=S_j$ for some $j$. Now, we will prove Theorem $\ref{main theorem}$ for the case of systems of generalized transverse type.
	\begin{proof}
		By multiplication and conjugation with elliptic, scalar pseudodifferential operators, we may assume  that $m=1$ and $r=0$, and using the normal form we can assume that $N=r_0$, and $P$ is of the form in Proposition $\ref{prop1 generalized}$. By using Theorem A.4 in \cite{denckerdoublerefraction} for all the terms in the expansion of $A$, we obtain that $A\in \mathcal C^\infty(\mathbb R, \Psi^0_{phg})$. As the dimension of the fiber of $\mathcal N_A\cap\mathcal N_{P}$ is $1$ at $\nu_0\in \Sigma _2$, and the dimension of the fiber of $\mathcal N_{P}$ is $r_0$ at $\Sigma_2$, we get $\operatorname{rank} \sigma(A)=r_0-1$ at $\Sigma_2$. Hence, we can conjugate by suitable elliptic systems in $\mathcal C^\infty(\mathbb R,\Psi^0_{phg})$ to get that $Au\cong (u_1,...u_{r_0-1},0)\in H^{\epsilon}$ in a conical neighborhood $U$ of $\nu_0$, for some $\epsilon>0$. Then, we find $\pi_1(\operatorname{Pol}^0(u))=\operatorname{WF}^{0}(u_{r_0})$ in $U$. By shrinking  $U$ and decreasing $\epsilon$, we may assume $Pu\in H^{\epsilon}$ in $U$. Remember that we have $P\in \Psi^{0,1}$, hence we get $Qu=\ ^tP^{\operatorname{co}}Pu\in H^{\epsilon,-r_0+1}$ there, where $\ ^tP^{\operatorname{co}}$ is the adjugate matrix of $P$. Let $Q=(q_{ij})_{i,j=1}^{r_0}$, and $P=(P_{ij})_{i,j=1}^{r_0}$. Since $q_{r_0i}$ are in $\mathcal C^\infty(\mathbb R,\operatorname{Op} S(\vartheta^{r_0-1},g))$ for $i=1,...r_0-1$, we find that $q_{r_0 r_0}u_{r_0+1}\in H^{\epsilon,-r_0+1}$ in $U$. Similarly, we find that $P_{r_0 r_0}u_{r_0}\in H^{\epsilon,-1}$, which in turns gives $D^{k-1}_tP_{r_0 r_0}u_{r_0}\in H^{\epsilon,-k}$ for $k=1,...,r_0-1$. We want to prove that $u_{r_0}\in H^{0}$ at $(t,x_0;0,\xi_0)\in U\cap\Sigma_2$ for $t<t_0$, implies $u_{r_0}\in H^{0}$ at $(t_0,x_0;0,\xi_0)=\nu_0$.
		
		Thus assume that $u_{r_0}\in H^{0}$ at $(t,x_0;0,\xi_0)\in U\cap \Sigma _2$ when $t<t_0$. We may assume that $\delta\leq 1$, then Lemma $\ref{lemma 3.9 uniaxial}$ gives that $u_{r_0}$, $P_{r_0 r_0}u_{r_0}$, $D^{k-1}_tP_{r_0 r_0}u_{r_0}$ for $k=2,...,r_0-1$  and $q_{r_0 r_0}u_{r_0}$ satisfies $(\ref{3.30})$. Then $\xi\neq 0$ in $\operatorname{WF}(u_{r_0})$, and assuming that $\delta\leq\epsilon$ in $(\ref{3.30})$ we find that $P_{r_0 r_0}u_{r_0}\in H_*^{0,-1}$, $D^{k-1}_tP_{r_0 r_0}u_{r_0}\in H_*^{0,-k}$ for $k=1,...,r_0-1$  and  $q_{r_0 r_0}u_{r_0}\in H_*^{0,-r_0+1}$ in $\pi_{0}(U\cap \Sigma_2)$, and $u_{r_0}\in H_*^{0}$ at $(t,x_0,\xi_0)\in \pi_0(U\cap \Sigma_2)$ for $t<t_0$. Since $P_{r_0 r_0}\cong D_t$ $\operatorname{mod}$ $\mathcal C^\infty (\mathbb R,\operatorname{Op} S(\vartheta,g))$, we get $P_{r_0 r_0}u_{r_0}\cong D_t u_{r_0}\in H_*^{0,-1}$ which implies, using $D^{k-1}_tP_{r_0 r_0}u_{r_0}\in H_*^{0,-k}$, that $D_t^ku_{r_0}\in H_*^{0,-k}$ for $k=1,...,r_0-1$. This gives 
		\begin{align}
		(D_t^ku_{r_0})|_{t=r}\in  H_*^{0,-k}\ \ \ \text{at} \ \ (x_0,\xi_0),\ \text{for}\ k=0,...,r_0-1,
		\end{align}
		for almost all $r<t_0$, close to $t_0$. Proposition $\ref{prop 7.1}$ (with $N=1$ and $Q=q_{r_0 r_0}$) gives $u_{r_0}\in H_*^{0}$ at $(t_0,x_0,\xi_0)$, and Lemma $\ref{lemma 3.9 uniaxial}$ gives $u_{r_0}\in H^{0}$ at $(t_0,x_0;0,\xi_0)$. 
	\end{proof}
	\section{Propagation of polarization sets for systems of MHD type}\label{section propagation of polarization sets for systems of MHD type}
	In this section, we define systems of MHD type which are also systems having their characteristic sets are union of $r_0$ hypersufaces intersecting transversally at an involutive manifold of codimension $d_0\geq 2$ and  $r_0\geq2$. However, they satisfy some assumptions different than that in case of systems of generalized transverse type. We named them systems of MHD type because we first noticed these types of systems when we considered studying the propagation of polarization sets of the linearized ideal MHD equations. Let $P\in\Psi^{m}_{phg}(X)$ be an $N\times N$ system of classical pseudodifferential operators of order $m$ on a smooth manifold $X$. Let $p=\sigma(P)$ be the principal symbol of $P$, $\det p$ the determinant of $p$ and $\Sigma=(\det p)^{-1}(0)$ the characteristic set of $P$. Assume microlocally near $\nu_0=(x_0,\xi_0)\in\Sigma$, that 
	\begin{align}\label{sigma and sigma2}
	\begin{split}
	\Sigma=&\bigcup_{j=1}^{r_0} S_j,\  r_0\geq 2, \ \text{where}\  S_j\  \text{are non-radial hypersurfaces intersecting transversally at}\\ \  &\Sigma_2=\bigcap_{j=1}^{r_0} S_j.
	\end{split}
	\end{align}
	We are interested in finding the propagation of polarization set at $\Sigma_2$, as in our application; see Section $\ref{section application}$, we know the result on $\Sigma\setminus \Sigma_2$. We assume that  
	\begin{align}\label{sigma2 manifold}
	\Sigma_2\ \text{is an involutive manifold of codimension}\ d_0\geq 2,
	\end{align}
	\begin{align}\label{detp at sigm2}
	d^j(\det p)=0\ \ \text{for}\  j\leq r_0\ \text{and}\  d^{r_0+1}(\det p)\neq 0\ \ \ \ \text{at}\ \Sigma _2.
	\end{align}
	Moreover, assume that 
	\begin{align}\label{dim fiber Np at sigma2}
	\text{the dimension of the fiber of}\ \mathcal N_P\ \text{is equal to}\ r_0+1\ \text{at}\ \Sigma_2,
	\end{align}
	\begin{align}\label{detp at Si0}
	d (\det p)= 0 \ \text{and}\ d^2 (\det p)\neq 0\ \ \text{at}\ S_{i_0}\setminus \Sigma_2,\ \text{for only one}\ i_0\in\{1,...r_0\},
	\end{align}
	and
	\begin{align}\label{detp at Sj}
	d(\det p)\neq 0 \ \text{at}\ S_j\setminus \Sigma _2\ \text{for each}\ j\in\{1,...,r_0\}, \ \text{such that} \ j\neq i_0.
	\end{align}
	Moreover, assume that $\ ^tP^{\operatorname{co}}$ the adjugate matrix of $P$ can be written as 
	\begin{align}\label{adjugate matrix}
	^tP^{\operatorname{co}}=RL_1+L_2,
	\end{align}
	with $R$ being a scalar pseudodifferential operator of order $m$, with $\sigma(R)$ vanishing on $S_{i_0}$ to the first order. $L_1$, and $L_2$ are $N\times N$ system of pseudodifferential operators of order $m(N-2)$, and $m(N-3)$ respectively. Assume also that 
	\begin{align}\label{principal symbol of L1}
	\sigma(L_1)p=f\operatorname{Id}_N,
	\end{align}
	with $\Sigma=\{f=0\}$, and $d f\neq 0$ at $S_i\setminus \Sigma_2$ for $i=\{1,...,r_0\}$.
	We are using same notation as the previous section. 
	\begin{defn}
		The system $P$ is of MHD type at $\nu_0\in\Sigma_2$, if $(\ref{sigma and sigma2})$-$(\ref{principal symbol of L1})$, and $(\ref{2.7 double refraction})$ hold microlocally near $\nu_0$.
	\end{defn}
	We want to write systems of MHD in a normal form.
	\begin{prop}\label{prop1 MHD}
		Let $P\in \Psi ^1_{phg}$ be an $N\times N$ system of MHD type at $\nu_0\in \Sigma _2$. Then by choosing suitable symplectic coordinates, we may assume that $X=\mathbb R\times \mathbb R^{n-1}$, $\nu_0=(0;(0,...,1))$, and 
		\begin{align}\label{2.3 singularities MHD}
		S_j=\{(t,x;\tau,\xi)\in T^*(\mathbb R\times \mathbb R^{n-1}): \tau+\beta_j(t,x,\xi)=0\},\ \ j=1,...,r_0,
		\end{align}
		microlocally near $\nu_0$. Here $\beta_j$ are real and homogeneous of degree $1$ in $\xi$; with
		$\beta_1\equiv 0$, satisfies in a conical neighborhood of $\nu_0$
		\begin{align}\label{betaj MHD}
		c\lvert \xi '\rvert\leq \lvert \beta_j\rvert\leq C\lvert \xi '\rvert\ \ \ j=2,...,r_0, \ \ 0<c<C,
		\end{align}
		where $(\tau,\xi',\xi'')\in\mathbb R\times\mathbb R^{d_0-1}\times \mathbb R^{n-d_0}$, which gives $\Sigma _2=\{\tau=0, \xi'=0\}$. By conjugating $P$ with elliptic, scalar Fourier integral operators, and multiplying with elliptic $N\times N$ systems of order $0$, we may assume that
		\begin{align}
		P\cong\begin{pmatrix}
		F & 0\\
		0 & E
		\end{pmatrix}\ \ \operatorname{mod}\ \mathcal C^\infty,
		\end{align}
		microlocally near $\nu_0$, where $E\in \Psi ^1_{phg}$ is an elliptic $(N-r_0-1)\times (N-r_0-1)$ system, and 
		\begin{align}\label{3.4 MHD}
		F\cong \operatorname{Id}_{r_0+1} D_t+K(t,x,D_x)\ \ \operatorname{mod}\ \mathcal C^\infty.
		\end{align}
		Here $K(t,x,D_x)\in\mathcal C^\infty(\mathbb R,\Psi ^1_{phg})$ is an $(r_0+1)\times (r_0+1)$ system, and the eigenvalues of $k(t,x,\xi)$; the principal symbol of $K(t,x,D_x)$, are $0$ (double), $\beta_2$,...$\beta_{r_0}$. 
	\end{prop}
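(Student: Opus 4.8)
The plan is to mimic the proof of Proposition~\ref{prop1} line by line, replacing the block size $r_0$ by $r_0+1$ wherever $\dim\mathcal N_P|_{\Sigma_2}$ enters, while keeping careful track of the fact that the root $\tau=0$ of $\det\sigma(F)$ is now double. Since the statement is microlocal I would first take $X=\mathbb R^n$ and, using that $\Sigma_2$ is involutive, choose symplectic homogeneous coordinates with $\nu_0=(0;(0,\dots,1))$ and $\Sigma_2=\{\xi'=0\}$, where $\xi=(\xi',\xi'')\in\mathbb R^{d_0}\times\mathbb R^{n-d_0}$. A small but essential point is that, before fixing the reference surface, one must relabel the $S_j$ so that the distinguished surface $S_{i_0}$ of \eqref{detp at Si0} becomes $S_1$; then, exactly as in Proposition~\ref{prop1}, one may arrange $S_1=\{\xi_1=0\}$, rename $x_1=t$, $\tau=\xi_1$, $x'=(x_2,\dots,x_{d_0})$, and use transversality of each $S_j$ to $S_1$ at $\Sigma_2$ to write $S_j=\{\tau+\beta_j(t,x,\xi)=0\}$ with $\beta_j$ real, homogeneous of degree $1$, $\beta_1\equiv0$, and $c\lvert\xi'\rvert\le\lvert\beta_j-\beta_k\rvert\le C\lvert\xi'\rvert$ for $j\ne k$; taking $k=1$ gives \eqref{betaj MHD} and $\Sigma_2=\{\tau=0,\ \xi'=0\}$.

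Next I would run the block reduction: by \eqref{dim fiber Np at sigma2} choose elliptic homogeneous $0$-symbols $a,b$ so that $b$ maps $\operatorname{Im}p$ into $\{z:z_j=0,\ j\le r_0+1\}$ and $a^{-1}$ maps $\ker p$ onto $\{z:z_j=0,\ j>r_0+1\}$ over $\Sigma_2$ near $\nu_0$, so that $bpa$ has the block form \eqref{matrix principal symbol} with $e$ now an elliptic $(N-r_0-1)\times(N-r_0-1)$ block and $s_{11},s_{12},s_{21}$ vanishing on $\Sigma_2$. Quantizing $a,b$ to $A,B$, forming $BPA$, and conjugating on the left and right by the upper- and lower-triangular elliptic systems built from $S_{12}$, $S_{21}$ and a parametrix $J$ of $E$, precisely as in Proposition~\ref{prop1}, yields $P\cong\operatorname{diag}(F,E)\ \operatorname{mod}\ \mathcal C^\infty$ with $E$ elliptic and $F$ an $(r_0+1)\times(r_0+1)$ system of order $1$.

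It then remains to normalize $F$. Writing $f=\sigma(F)$, combining \eqref{2.3 singularities MHD} with the vanishing prescriptions \eqref{detp at sigm2}, \eqref{detp at Si0} and \eqref{detp at Sj} --- second-order vanishing along $S_1=S_{i_0}$ and first-order vanishing along the remaining $S_j$ off $\Sigma_2$ --- should give $\det f=c\,\tau^2\prod_{i=2}^{r_0}(\tau+\beta_i)$ with $c$ nonvanishing, hence $\det f$ vanishes to order exactly $r_0+1$ in $\tau$ at $\Sigma_2$, i.e.\ $\partial_\tau^{r_0+1}(\det f)=\det(\partial_\tau f)\ne0$ there. Theorem~A.3 in \cite{denckerdoublerefraction} together with homogeneity then yields a homogeneous $C_0\in S^0$ with $\det C_0\ne0$ at $\Sigma_2$ and $f=C_0(\tau\operatorname{Id}_{r_0+1}+k)$; after multiplication by an elliptic system I take $C_0=\operatorname{Id}_{r_0+1}$, so $\det(\tau\operatorname{Id}_{r_0+1}+k)=\tau^2\prod_{i=2}^{r_0}(\tau+\beta_i)$ and the eigenvalues of $k$ are $0$ (double), $\beta_2,\dots,\beta_{r_0}$. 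Finally, Theorem~A.4 in \cite{denckerdoublerefraction} lets me write the subleading term $f_0$ of $F$ as $f_0=B_{-1}f+B_0$ with $B_0$ independent of $\tau$ and $B_{-1}\in S^{-1}$; multiplying $F$ by $\operatorname{Id}_{r_0+1}-B_{-1}$ removes $B_{-1}$, and an induction over the lower-order terms of the expansion of $F$ produces \eqref{3.4 MHD}.

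I expect the main obstacle to be exactly the only place where the MHD hypotheses enter differently from Proposition~\ref{prop1}: verifying that \eqref{detp at Si0}--\eqref{detp at Sj} really force $\det f$ to have a double root at $\tau=0$ and simple roots at $\tau=-\beta_j$ for $j\ge2$, so that its $\tau$-degree is exactly $r_0+1=\dim\mathcal N_P|_{\Sigma_2}$ --- this being precisely the nondegeneracy under which Dencker's matrix preparation theorem applies. One should note that the proposition makes no diagonalizability claim about $k$ at the double eigenvalue (unlike Proposition~\ref{prop1 generalized}), so once the degree count is in place there is no further obstruction, and conditions \eqref{adjugate matrix}--\eqref{principal symbol of L1} are not needed here --- they enter only in the subsequent propagation argument.
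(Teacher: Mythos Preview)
Your proposal is correct and follows essentially the same route as the paper's proof, which simply refers back to Proposition~\ref{prop1} with $r_0$ replaced by $r_0+1$ and then invokes \eqref{detp at Si0}, \eqref{detp at Sj}, and \eqref{2.3 singularities MHD} to obtain $\det f=c\,\tau^2\prod_{i=2}^{r_0}(\tau+\beta_i)$ before applying Dencker's preparation theorems. Your write-up is in fact more explicit than the paper's --- in particular, your observation that one must first relabel so that the distinguished surface $S_{i_0}$ of \eqref{detp at Si0} becomes $S_1$ (so that the double root lands at $\tau=0$) is a point the paper leaves implicit.
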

	\begin{proof}
		In a similar way as in the proof of Proposition $\ref{prop1}$, we get $(\ref{2.3 singularities MHD})$, $(\ref{betaj MHD})$, and 
		\begin{align}
		P\cong\begin{pmatrix}
		F & 0\\
		0 & E
		\end{pmatrix}\ \ \operatorname{mod}\ \mathcal C^\infty,
		\end{align}
		microlocally near $\nu_0$, where $E\in \Psi ^1_{phg}$ is an elliptic $(N-r_0-1)\times (N-r_0-1)$ system. If $f$ is the principal symbol for $F$, then conditions $(\ref{detp at Si0})$, $(\ref{detp at Sj})$, and $(\ref{2.3 singularities MHD})$ imply 
		\begin{align}
		\det f =c\tau^2\prod_{i=2}^{r_0}(\tau+\beta_i), \ 0\neq c\in  S^{-1},
		\end{align}
		thus $\partial_{\tau}^{r_0+1}(\det f)=\det(\partial_{\tau} f)\neq 0$ at $\Sigma_2$. Same as before we get $(\ref{3.4 MHD})$.
	\end{proof}
We use the same weight and metric introduced in the previous section; see $(\ref{w})$, and $(\ref{metric generalized uniaxial})$. Thus, we have $\beta_j\in S(\vartheta,g)$.
 \begin{prop}\label{MHD vs weyl symbol}
 	Let 
 	\begin{align}\label{3.10}
 	P=\operatorname{Id}_{r_0+1} D_t+K(t,x,D_x)
 	\end{align}
 	be an $(r_0+1)\times (r_0+1)$ system with $K\in\mathcal C^\infty (\mathbb R,\Psi^1_{phg})$, such that the eigenvalues of $k=\sigma(K)$ are $0$ (double), $\beta_2$,...,$\beta_{r_0}$. Then $P$ satisfies $(\ref{2.7 double refraction})$ if and only if $k\in\mathcal C^\infty(\mathbb R, S(\vartheta,g))$.
 \end{prop}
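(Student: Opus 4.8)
The statement is, mutatis mutandis, Proposition~\ref{prop1 generalized} with $r_0$ replaced by $r_0+1$, the only structural difference being that the eigenvalue $0$ of $k$ now has multiplicity two instead of one; so my plan is simply to rerun that argument. Writing $k=(k_{ij})_{1\le i,j\le r_0+1}$ and collecting its rows into $\alpha\in\mathcal C^\infty(\mathbb R,S^1)$, the first step is the homogeneity reduction: since the $k_{ij}$ and the $\beta_s$ are homogeneous of degree~$1$ in $\xi$, independent of $\tau$, and $\vartheta\approx 1+|\beta_s|$, one has $k\in\mathcal C^\infty(\mathbb R,S(\vartheta,g))$ if and only if $\alpha=O(\beta_s)$ for every $s=2,\dots,r_0$; moreover, by $(\ref{betaj MHD})$, $\beta_s$ is comparable to $|\xi'|$, hence does not vanish on $\Sigma_1$, so the quotients $k_{ij}/\beta_s$ and $\beta_m/\beta_s$ stay bounded there.

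For the implication $\Leftarrow$ I would fix $(\nu,\rho)\in\partial\Sigma_1$ with $\rho\neq 0$, choose $\Sigma_1\ni\nu_l\to\nu$ with $(\nu-\nu_l)/|\nu-\nu_l|\to\rho/|\rho|$, and set $\gamma_s^{ij}(\nu,\rho)=\lim_l(k_{ij}/\beta_s)(\nu_l)$ for $s=2,\dots,r_0$; this limit exists because $\alpha=O(\beta_s)$, and is independent of the sequence because $k_{ij}/\beta_s$ is homogeneous of degree~$0$ and $\tau$-independent. Since $\tau=0$ on $S_1$ while $\tau=-\beta_s$ on $S_s$, passing to the limit in $k(\nu_l)z_l=0$, resp. in $\bigl(-\beta_s(\nu_l)\operatorname{Id}_{r_0+1}+k(\nu_l)\bigr)z_l=0$, gives
\[
\partial\mathcal N_P^1(\nu,\rho)\subseteq\ker\bigl((\gamma_s^{ij}(\nu,\rho))\bigr),\qquad
\partial\mathcal N_P^s(\nu,\rho)\subseteq\ker\bigl(-\operatorname{Id}_{r_0+1}+(\gamma_s^{ij}(\nu,\rho))\bigr),
\]
i.e. the limit polarizations lie in the eigenspaces of $(\gamma_s^{ij})$ for the distinct eigenvalues $0$ and $1$; hence $\partial\mathcal N_P^1\cap\partial\mathcal N_P^s=\{0\}$, and a fortiori $\partial\mathcal N_P^1\cap\dots\cap\partial\mathcal N_P^{r_0}=\{0\}$ over $\partial\Sigma_1\setminus(\Sigma_2\times 0)$, which is $(\ref{2.7 double refraction})$. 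For $\Rightarrow$ I would argue by contraposition: if $\alpha\neq O(\beta_s)$ at some $\nu\in\Sigma_2$, pick $\nu_l=(t_l,x_l;0,\xi_l)\to\nu$ with $|\alpha(\nu_l)|>l\,|\beta_s(\nu_l)|$, pass to a subsequence so that $(\nu-\nu_l)/|\nu-\nu_l|\to\rho\neq 0$ and $\varepsilon^{ij}:=\lim_l k_{ij}(\nu_l)/|\alpha(\nu_l)|$ exists; then $\beta_m(\nu_l)/|\alpha(\nu_l)|\to 0$ for all $m$, so $k(\nu_l)/|\alpha(\nu_l)|\to(\varepsilon^{ij})\neq 0$, and, as in Proposition~\ref{prop1 generalized}, $\partial\mathcal N_P^s(\nu,\rho)\supseteq\ker((\varepsilon^{ij}))$ for every $s=1,\dots,r_0$. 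Because $0$ is an eigenvalue of $k$, $\det k\equiv 0$, so $\det(\varepsilon^{ij})=\lim_l\det\bigl(k(\nu_l)/|\alpha(\nu_l)|\bigr)=0$, whence $\ker((\varepsilon^{ij}))\neq\{0\}$ and $(\ref{2.7 double refraction})$ fails.

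The only genuinely new feature relative to Proposition~\ref{prop1 generalized} is the double zero eigenvalue: $0$ is now an eigenvalue of $(\gamma_s^{ij})$, resp. of $(\varepsilon^{ij})$, of multiplicity at least two, so $\ker((\gamma_s^{ij}))$ and $\ker((\varepsilon^{ij}))$ may be of complex dimension two rather than one. This changes nothing, since in the argument above I only use that $0$ and $1$ are distinct eigenvalues and that $\det k\equiv 0$, and $(\ref{2.7 double refraction})$ only constrains the common intersection of all the limit polarizations. I expect the one mildly delicate point to be --- exactly as already in the transverse case --- the inclusion $\partial\mathcal N_P^s(\nu,\rho)\supseteq\ker((\varepsilon^{ij}))$ for every $s$: one must exhibit, for each $s$ and each $z\in\ker((\varepsilon^{ij}))$, polarization vectors on $S_s\setminus\Sigma_2$ tending to $z$ along the prescribed direction, which one does as in the proof of Proposition~\ref{prop1 generalized}, using once more that $k/\beta_s$ does not depend on $\tau$ so that the rescaled limit is insensitive to which of the sheets of $\Sigma$ one approaches $\nu$ along.
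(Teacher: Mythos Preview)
Your proposal is correct and follows exactly the paper's approach: the paper's proof of this proposition is literally ``Same proof as in Proposition~\ref{prop1 generalized}, just we replace $r_0$ by $r_0+1$ when needed,'' and you have faithfully rerun that argument, including the homogeneity reduction $k\in S(\vartheta,g)\Leftrightarrow\alpha=O(\beta_s)$, the limit matrices $(\gamma_s^{ij})$ and $(\varepsilon^{ij})$, and the use of $\det k\equiv 0$ to force $\ker((\varepsilon^{ij}))\neq\{0\}$. Your remark that the double zero eigenvalue may make some kernels two-dimensional but does not affect the argument is apt, and your flagging of the inclusion $\partial\mathcal N_P^s\supseteq\ker((\varepsilon^{ij}))$ as the one step needing care is more scrupulous than the paper itself.
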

 \begin{proof}
Same proof as in Proposition $\ref{prop1 generalized}$, just we replace $r_0$ by $r_0+1$ when needed. 	
 \end{proof}
	\begin{prop}\label{prop 3.7 mhd}
		Assume that $P$ is an $(r_0+1)\times (r_0+1)$ system of pseudodifferential operators of order $1$ on $\mathbb R^n$, on the form $(\ref{3.10})$, with $K$ is in $ \mathcal C^\infty(\mathbb R, \operatorname{Op} (S(\vartheta,g)))$  near $\nu_0\in \Sigma _2$, and $k(t,x,\xi)$ has the eigenvalues $0$ (double), $\beta_1$,..., $\beta_{r_0}$. Let $u\in\mathcal S'(\mathbb R^n,\mathbb C^{r_0+1})$ and assume $Pu\in H^{r,s}$ at $\nu_0$. Then, for every $\delta>0$ we can find $c_{\delta}$ and $C_{\delta,N}>0$ and $v_\delta\in H^{r,s+1}$ at $\nu_0$, such that $u_\delta=u-v_\delta$ satisfies
		\begin{align}\label{3.30 MHD}
		\lvert \hat {u}_\delta(\tau,\xi)\rvert \leq C_{\delta,N}\langle(\tau,\xi)\rangle^{-N}, \ \ \ \forall N,
		\end{align}
		when $\lvert\tau\rvert >c_\delta(\langle\xi \rangle^\delta+\langle \xi' \rangle)$.
	\end{prop}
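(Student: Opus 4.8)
The plan is to copy, with the obvious adjustments, the argument for Proposition~$\ref{prop 3.7 generalized}$, i.e.\ the argument of Proposition~A.1 in \cite{denckertransversaltype}: the point is that $P$ is microlocally \emph{elliptic} in the frequency region where $|\tau|$ is large compared with $\langle\xi'\rangle$. Indeed, since $K\in\mathcal C^\infty(\mathbb R,\operatorname{Op}S(\vartheta,g))$ with $\vartheta=\langle\xi'\rangle$ as in $(\ref{w})$ and $g$ as in $(\ref{metric generalized uniaxial})$, the full symbol of $K$ is $O(\langle\xi'\rangle)$ together with all its $g$-derivatives, so the symbol $p=\tau\operatorname{Id}_{r_0+1}+k$ of $P$ satisfies $|p|\ge|\tau|/2$ and is invertible with $|p^{-1}|\le 2|\tau|^{-1}\le C\vartheta^{-1}$ on every region $\{|\tau|>c(\langle\xi\rangle^\delta+\langle\xi'\rangle)\}$; carrying along the lower order terms of $P$ (which contribute only $S(1,g)$-corrections) and treating $\tau$ as a variable co-weighted with $\langle\xi'\rangle$, one gets $p^{-1}$ in the corresponding class, uniformly on such regions and with smooth dependence on $t$. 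Thus $P\in\Psi^{0,1}$ is elliptic there, with a parametrix that gains exactly one order in the $s$-index, i.e.\ maps $H^{r,s}$ into $H^{r,s+1}$.

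Concretely: fix $\delta>0$, choose $0<c_\delta'<c_\delta$ and a cutoff $\chi_\delta\in\mathcal C^\infty(\mathbb R,S(1,g))$ equal to $1$ on $\{|\tau|>c_\delta(\langle\xi\rangle^\delta+\langle\xi'\rangle)\}$ and supported in $\{|\tau|>c_\delta'(\langle\xi\rangle^\delta+\langle\xi'\rangle)\}$. On $\operatorname{supp}\chi_\delta$ the symbol $p$ is elliptic, so a Neumann series in the $g$-calculus produces an operator $B\in\mathcal C^\infty(\mathbb R,\Psi^{0,-1})$ with $BP=\operatorname{Op}(\chi_\delta)+\mathcal R$, where $\mathcal R$ is smoothing and the $t$-dependence is preserved at each step. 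Put $v_\delta=B(Pu)$. Writing $Pu=w_1+w_2$ with $w_1\in H^{r,s}$ and $\nu_0\notin\operatorname{WF}(w_2)$ (possible since $Pu\in H^{r,s}$ at $\nu_0$), the one-order gain gives $Bw_1\in H^{r,s+1}$, while pseudolocality gives $Bw_2\in\mathcal C^\infty$ near $\nu_0$, so $v_\delta\in H^{r,s+1}$ at $\nu_0$. Finally $u_\delta=u-v_\delta=\operatorname{Op}(1-\chi_\delta)u-\mathcal Ru$, with $\mathcal Ru$ smooth; since $1-\chi_\delta$ is supported in $\{|\tau|\le c_\delta(\langle\xi\rangle^\delta+\langle\xi'\rangle)\}$, composing $\operatorname{Op}(1-\chi_\delta)$ on the left with any frequency localizer supported where $|\tau|>c_\delta(\langle\xi\rangle^\delta+\langle\xi'\rangle)$ yields a smoothing operator, and the standard oscillatory-integral estimate then shows that $\widehat{u_\delta}(\tau,\xi)$ is rapidly decreasing on that region, which is $(\ref{3.30 MHD})$.

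The only genuine work is the calculus bookkeeping already carried out in Proposition~A.1 of \cite{denckertransversaltype}: making the parametrix construction precise in the anisotropic Weyl calculus attached to $g$ from $(\ref{metric generalized uniaxial})$ with weight $\vartheta=\langle\xi'\rangle$; verifying that $p^{-1}$ lies in the appropriate symbol class uniformly over the regions $\{|\tau|>c(\langle\xi\rangle^\delta+\langle\xi'\rangle)\}$ (this is where the $\langle\xi\rangle^\delta$ term, and hence the unavoidable $O(\delta)$ loss, enters); checking that these regions admit cutoffs in $S(1,g)$; and maintaining smooth dependence on $t$ throughout. I expect this, rather than any new idea, to be the main obstacle. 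It is completely insensitive to the eigenvalue structure of $k$: only the bound $\|k\|\lesssim\vartheta$ is used, so the fact that $0$ is now a double eigenvalue (making $P$ of size $r_0+1$) changes nothing, and the proof is literally that of Proposition~$\ref{prop 3.7 generalized}$ with $r_0$ replaced by $r_0+1$.
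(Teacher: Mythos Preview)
Your proposal is correct and is exactly the approach taken in the paper: the paper's proof of this proposition reads in its entirety ``Same proof as for Proposition~\ref{prop 3.7 generalized}, with replacing $r_0$ by $r_0+1$ when needed,'' and Proposition~\ref{prop 3.7 generalized} in turn just refers to Proposition~A.1 in \cite{denckertransversaltype}. Your closing observation that the eigenvalue structure of $k$ is irrelevant and only the bound $\|k\|\lesssim\vartheta$ matters is precisely why the argument transfers verbatim.
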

	\begin{proof}
		Same proof as for Proposition $\ref{prop 3.7 generalized}$, with replacing $r_0$ by $r_0+1$ when needed. 
	\end{proof} 
	
	Now, we are ready to prove Theorem $\ref{main theorem}$ for systems of MHD type.
	\begin{proof}[Proof of Theorem $\ref{main theorem}$]
 Note that as $M_A=\pi_1(\mathcal N_A\cap\mathcal N_P\setminus 0)$ is a hypersurface near $\nu_0$, we have $M_A=S_j$ for some $j$.
		By multiplication and conjugation with elliptic, scalar pseudodifferential operators we may assume  that $m=1$ and $r=0$, and using the normal form we can assume that $N=r_0+1$, and $P$ is of the form $(\ref{3.10})$, with $k\in \mathcal C^\infty(\mathbb R, S(\vartheta,g))$. By using Theorem A.4 in \cite{denckerdoublerefraction} for all the terms in the expansion of $A$, we obtain that $A\in \mathcal C^\infty(\mathbb R, \Psi^0_{phg})$. As the dimension of the fiber of $\mathcal N_A\cap\mathcal N_{P}$ is $1$ at $\nu_0\in \Sigma _2$, and the dimension of the fiber of $\mathcal N_{P}$ is $r_0+1$ at $\Sigma_2$, we get $\operatorname{rank} \sigma(A)=r_0$ at $\Sigma_2$. Hence, we can conjugate by suitable elliptic systems in $\mathcal C^\infty(\mathbb R,\Psi^0_{phg})$ to get that $Au\cong\  ^t(u_1,...u_{r_0},0)\in H^{\epsilon}$ in a conical neighborhood $U$ of $\nu_0$, for some $\epsilon>0$. Then, we find $\pi_1(\operatorname{Pol}^0(u))=\operatorname{WF}^{0}(u_{r_0+1})$ in $U$. By shrinking  $U$ and decreasing $\epsilon$, we may assume $Pu\in H^{\epsilon}$ in $U$. Remember that we have $P\in \Psi^{0,1}$. Let $L=L_1+L_2$, where $L_1$ and $L_2$ are given in $(\ref{adjugate matrix})$, and let $Q=LP$. We have  $Qu=LPu\in H^{\epsilon,-r_0+1}$ there. Let $Q=(q_{ij})_{i,j=1}^{r_0+1}$, and $P=(P_{ij})_{i,j=1}^{r_0+1}$. By $(\ref{adjugate matrix})$ and $(\ref{principal symbol of L1})$ we have $q_{ij}$ are in $\mathcal C^\infty(\mathbb R,\operatorname{Op} S(\vartheta^{r_0-1},g))$ for $i\neq j$, we find that $q_{r_0+1 r_0+1}u_{r_0+1}\in H^{\epsilon,-r_0+1}$ in $U$. Similarly, we find that $P_{r_0+1 r_0+1}u_{r_0+1}\in H^{\epsilon,-1}$, which in turns gives $D^{k-1}_tP_{r_0+1 r_0+1}u_{r_0+1}\in H^{\epsilon,-k}$ for $k=1,...,r_0-1$. We want to prove that $u_{r_0+1}\in H^{0}$ at $(t,x_0;0,\xi_0)\in U\cap\Sigma_2$ for $t<t_0$, implies $u_{r_0+1}\in H^{0}$ at $(t_0,x_0;0,\xi_0)=\nu_0$.
		
		Thus assume that $u_{r_0+1}\in H^{0}$ at $(t,x_0;0,\xi_0)\in U\cap \Sigma _2$ when $t<t_0$. We may assume that $\delta\leq 1$, then Lemma $\ref{lemma 3.9 uniaxial}$ gives that $u_{r_0+1}$, $P_{r_0+1 r_0+1}u_{r_0+1}$, $D^{k-1}_tP_{r_0+1 r_0+1}u_{r_0+1}$ for $k=2,...,r_0-1$  and $q_{r_0+1 r_0+1}u_{r_0+1}$ satisfies $(\ref{3.30 MHD})$. Then $\xi\neq 0$ in $\operatorname{WF}(u_{r_0+1})$, and assuming that $\delta\leq\epsilon$ in $(\ref{3.30})$ we find that $P_{r_0+1 r_0+1}u_{r_0+1}\in H_*^{0,-1}$, $D^{k-1}_tP_{r_0+1 r_0+1}u_{r_0+1}\in H_*^{0,-k}$ for $k=1,...,r_0-1$  and  $q_{r_0+1 r_0+1}u_{r_0+1}\in H_*^{0,-r_0+1}$ in $\pi_{0}(U\cap \Sigma_2)$, and $u_{r_0+1}\in H_*^{0}$ at $(t,x_0,\xi_0)\in \pi_0(U\cap \Sigma _2)$ for $t<t_0$. Since $P_{r_0+1 r_0+1}\cong D_t$ $\operatorname{mod}$ $\mathcal C^\infty (\mathbb R,\operatorname{Op} S(\vartheta,g))$, we get $P_{r_0+1 r_0+1}u_{r_0+1}\cong D_t u_{r_0+1}\in H_*^{0,-1}$ which implies using $D^{k-1}_tP_{r_0+1 r_0+1}u_{r_0+1}\in H_*^{0,-k}$ that $D_t^ku_{r_0+1}\in H_*^{0,-k}$ for $k=1,...,r_0-1$. This gives 
		\begin{align}
		(D_t^ku_{r_0+1})|_{t=r}\in  H_*^{0,-k}\ \ \ \text{at} \ \ (x_0,\xi_0),\ \text{for}\ k=0,...,r_0-1
		\end{align}
		for almost all $r<t_0$, close to $t_0$. Proposition $\ref{prop 7.1}$ (with $N=1$ and $Q=q_{r_0+1 r_0+1}$) gives $u_{r_0+1}\in H_*^{0}$ at $(t_0,x_0,\xi_0)$, and Lemma $\ref{lemma 3.9 uniaxial}$ gives $u_{r_0+1}\in H^{0}$ at $(t_0,x_0;0,\xi_0)$. 
	\end{proof}
	\section{Application}\label{section application}
	Magnetohydrodynamics, or MHD couples Maxwell's equations with hydrodynamics to describe the behavior of electrically conducting fluids under the influence of electromagnetic fields. In this section, we want to consider the simplest form of MHD, which is the Ideal MHD to study the propagation of polarization set of the solution of the linearized ideal MHD equations. Ideal MHD, assumes that the fluid has so little resistivity that it can be treated as a perfect conductor. See \cite{lecturesmhd} to know more about MHD equations.
	
	We will show, under some assumptions, that the linearized ideal MHD equations is of real principal type. As we mentioned before, systems of real principal type were defined by Dencker; see \cite{denckerprincipaltype}, who studied the propagation of their solutions, and showed that the propagation of polarization sets is governed by a certain connection on sections of the kernel subbundle, $\ker p_2$ where $p_2$ is the principal symbol of the system. In \cite{hansenlagrangiansolutions}, Hansen and R\"{o}hrig merged the theory of real principal type systems with the calculus of Fourier integral operators and constructed a Fourier integral solution for system of real principal type, and derived a transport equation for the principal symbol of this solution (note that disregarding half densities this transport equation is the connection introduced by Dencker).  
	
	We will divide this section into two subsections: first, we introduce the ideal MHD equations and its linearization. In Section $\ref{section application transport eq}$, we write the linearized ideal MHD equations in the form of a wave equation $P\beta=0$ where $\beta$ is the displacement vector and $P$ is a second order  $3\times3$ system; see \cite[Lecture 20]{lecturesmhd}, and we show that under some assumptions, the characteristic variety of $P$ is disjoint union of the Shear Alfv\'{e}n wave, the slow magnetosonic wave and the fast magnetosonic wave; see \cite[Lecture 24]{lecturesmhd}. Moreover, we show that, under the considered assumptions, $P$ is of real principal type and we calculate the transport equation on $\operatorname{Char}P$, as an application to the result in \cite{hansenlagrangiansolutions}. In Section $\ref{section application MHD type system}$, we return to the linearized ideal MHD equations, and we study the propagation of polarization sets. It turns out that we can consider different cases, some in which we have our system is of real principal type, some in which our system is of uniaxial type, and we have a case where our system is of MHD type.
	
	The set of equations describing the ideal MHD are 
	\begin{align}\label{1}
	\begin{cases}
	\partial_t\rho +u\cdot\nabla \rho +\rho\operatorname{div}u=0,\\
	\rho(\partial_t u+u\cdot\nabla u)+\nabla p+ H\times\operatorname{curl} H=0,
	\\\partial_tH-\nabla\times(u\times H)=0,\\
	\partial_tp+u\cdot\nabla p+\gamma p\operatorname{div} u=0,
	\end{cases}
	\end{align}
	where $\rho$, $p\in \mathbb R$ denotes the density and the pressure respectively. $u\in\mathbb R^3$ is the fluid velocity, $H\in\mathbb R^3$ is the magnetic field, and $\gamma $ is the adiabatic index, see \cite[Lecture \ 20]{lecturesmhd}.
	\\Assuming a stationary equilibrium the linearized equations of $(\ref{1})$ about $(\rho,H,p)$ is:
	\begin{subequations}
		\begin{align}
		&\partial_t\dot{\rho} =-\rho\operatorname{div}\dot{u}-\dot{u}\cdot\nabla\rho,\label{2a}\\
		&\rho\partial_t \dot{u}=-\nabla\dot{p}-H\times\operatorname{curl}\dot{H}-\dot{H}\times\operatorname{curl}H,\label{2b}
		\\&\partial_t\dot{H}=\nabla\times(\dot{u}\times H),\label{2c}\\
		&\partial_t\dot{p}=-\gamma p\operatorname{div}\dot{u}-\dot{u}\cdot\nabla p,\label{2d} 
		\end{align}
		\label{2}
	\end{subequations}
	where $(\rho,H,p)$ are the values in the equilibrium state (that is the solutions of the Ideal MHD equations when $\partial/\partial t=0$, and as we assumed stationary equilibrium we have $u=0$). Note that we used that 
	\begin{align}\label{condition equilibrium}
	\nabla p+H\times \operatorname{curl}H=0,
	\end{align}
	which we get from the stationary equilibrium assumption.
	\subsection{The ideal MHD wave equation and the transport equation}\label{section application transport eq}
	We can write the linearized ideal MHD equations (\ref{2}) in the form of a wave equation; see \cite[Lecture \ 20]{lecturesmhd}. Consider the displacement $\beta$ to be defined by 
	\begin{align}\label{4}
	\dot{u}=\frac{\partial \beta}{\partial t}.
	\end{align}
	Substituting (\ref{4}) in (\ref{2a}), (\ref{2c}), and (\ref{2d}) respectively, and integrating with respect to $t$ we get
	\begin{subequations}
		\begin{align}
		&\dot{\rho}=-\rho \operatorname{div} \beta-\beta\cdot\nabla \rho,\label{5a}
		\\
		& \dot{H}=\nabla \times(\beta\times H).\label{5b}\\
		& \dot{p}=-\gamma p \operatorname{div} \beta-\beta\cdot\nabla p,\label{5c}
		\end{align}
		\label{5}
	\end{subequations}
	Replace now (\ref{5b}), (\ref{5c}), and $(\ref{4})$ in (\ref{2b}) to get
	\begin{align}
	\rho\frac{\partial^2\beta}{\partial t^2}=&\gamma\nabla( p \operatorname{div} \beta)+\nabla(\beta\cdot\nabla p)+(\nabla\times(\nabla \times(\beta\times H)))\times H+(\nabla\times H)\times(\nabla\times(\beta\times H)).\label{6}
	\end{align}
	Equation (\ref{6}) is the ideal MHD wave equation.
	Consider from now on $P$ where 
	\begin{align}
	\begin{split}
	P\beta &=-\rho\frac{\partial^2\beta}{\partial t^2}+\gamma\nabla( p \operatorname{div} \beta)+\nabla(\beta\cdot\nabla p)+(\nabla\times(\nabla \times(\beta\times H)))\times H\\&+(\nabla\times H)\times(\nabla\times(\beta\times H))\\
	&=0.
	\end{split}
	\end{align}
	Now, we want to give the Characteristic variety of $P$ under some assumptions; for this part we refer to \cite[Lectures\ 23 and 24]{lecturesmhd}.
	\begin{lemma}\label{lemma char disjoint} Assume $c^2=\gamma p/\rho>0$, $0<\lvert H\rvert ^2\neq \rho c^2$, $\xi\cdot H\neq 0$, and $\xi\times H\neq 0$ . The characteristic variety of $P$ is disjoint union of the Shear Alfv\'{e}n wave, the slow magnetosonic wave, and the fast magnetosonic wave characteristic varieties $\{q_1=0\}$, $\{q_2=0\}$, and  $\{q_3=0\}$ respectively, where 
		\begin{align}
		\begin{cases}
		&q_1=\rho \tau^2-(H.\xi)^2,
		\\&q_2=\rho(\tau^2-c_s^2(x,\xi))
		\\&q_3=\rho(\tau^2-c_f^2(x,\xi)),
		\end{cases}
		\end{align}
		with
		\begin{align*}
		c_f^2(x,\xi):=\frac{1}{2}\big((c^2+h^2)\xi^2+\sqrt{(c^2-h^2)^2\xi^4+4b^2c^2\xi^2}\big),
		\end{align*}
		\begin{align*}
		c_s^2(x,\xi):=\frac{1}{2}\big((c^2+h^2)\xi^2-\sqrt{(c^2-h^2)^2\xi^4+4b^2c^2\xi^2}\big),
		\end{align*}
		where $c^2=\gamma p/\rho>0$, and $h^2=\lvert H\rvert^2/\rho$ are considered to be the sound speed and the Alfv\'{e}n speed respectively, and $b^2=\lvert \xi\times H\rvert^2/\rho.$\end{lemma}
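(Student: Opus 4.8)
The plan is to compute the principal symbol $p_2(x;\tau,\xi)$ of the second order operator $P$, factor $\det p_2$, and then read off the disjointness from the algebra (recall that the characteristic variety depends only on $p_2$). First I would isolate the genuinely second order part of $P\beta$: the summands $\nabla(\beta\cdot\nabla p)$ and $(\nabla\times H)\times(\nabla\times(\beta\times H))$ carry at most one derivative of $\beta$ and so drop out, leaving
\[
p_2(x;\tau,\xi)\beta=\rho\tau^2\beta-\gamma p\,(\xi\cdot\beta)\,\xi-\bigl(\xi\times(\xi\times(\beta\times H))\bigr)\times H .
\]
Expanding the last term by applying the identity $a\times(b\times c)=b(a\cdot c)-c(a\cdot b)$ three times shows it lies in $\operatorname{span}\{\beta,\xi,H\}$, and collecting coefficients gives
\[
p_2\beta=\bigl(\rho\tau^2-(\xi\cdot H)^2\bigr)\beta+\bigl((\xi\cdot H)(H\cdot\beta)-(\gamma p+|H|^2)(\xi\cdot\beta)\bigr)\xi+(\xi\cdot\beta)(\xi\cdot H)\,H .
\]

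Next I would use the hypothesis $\xi\times H\neq0$, so that $\{\xi,\,H,\,\xi\times H\}$ is a basis of $\mathbb R^3$ and $\mathbb R^3=\operatorname{span}\{\xi,H\}\oplus\operatorname{span}\{\xi\times H\}$. From the displayed formula both subspaces are $p_2$-invariant, hence $\det p_2$ is the product of the two corresponding block determinants. On $\operatorname{span}\{\xi\times H\}$ the symbol acts by the scalar $\rho\tau^2-(\xi\cdot H)^2=q_1$. On $\operatorname{span}\{\xi,H\}$ I would write $\beta=a\xi+bH$, use $\xi\cdot\beta=a|\xi|^2+b(\xi\cdot H)$ and $H\cdot\beta=a(\xi\cdot H)+b|H|^2$ to obtain, in the basis $\{\xi,H\}$, the matrix
\[
\begin{pmatrix}\rho\tau^2-(\gamma p+|H|^2)|\xi|^2 & -\gamma p\,(\xi\cdot H)\\ |\xi|^2(\xi\cdot H) & \rho\tau^2\end{pmatrix},
\]
whose determinant, after dividing by $\rho^2$ and inserting $c^2=\gamma p/\rho$, $h^2=|H|^2/\rho$, $b^2=|\xi\times H|^2/\rho$ together with $|\xi\times H|^2=|\xi|^2|H|^2-(\xi\cdot H)^2$, equals $\tau^4-(c^2+h^2)|\xi|^2\tau^2+c^2|\xi|^2\bigl(h^2|\xi|^2-b^2\bigr)$. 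Since the definitions give $c_s^2+c_f^2=(c^2+h^2)|\xi|^2$ and $c_s^2c_f^2=c^2|\xi|^2(h^2|\xi|^2-b^2)$, this block determinant is $\rho^2(\tau^2-c_s^2)(\tau^2-c_f^2)=q_2q_3$. Therefore $\det p_2=q_1q_2q_3$ and $\operatorname{Char}P=\{q_1=0\}\cup\{q_2=0\}\cup\{q_3=0\}$.

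For the disjointness I would set $c_A^2:=(\xi\cdot H)^2/\rho$ and note the key identity $h^2|\xi|^2-b^2=c_A^2$, so that $\{q_1=0\}=\{\tau^2=c_A^2\}$ while the slow/fast values are the roots of $m(X)=X^2-(c^2+h^2)|\xi|^2X+c^2|\xi|^2c_A^2$. Evaluating, $m(c_A^2)=c_A^2\bigl(c_A^2-h^2|\xi|^2\bigr)=-c_A^2b^2<0$, because $\xi\cdot H\neq0$ forces $c_A^2>0$ and $\xi\times H\neq0$ forces $b^2>0$. A monic quadratic that is negative at $c_A^2$ has two distinct real roots straddling it, i.e. $c_s^2<c_A^2<c_f^2$ pointwise (all three positive here); hence on $\{q_2=0\}$, $\{q_1=0\}$, $\{q_3=0\}$ one has respectively $\tau^2=c_s^2,\ c_A^2,\ c_f^2$, which are pairwise distinct, so the three varieties are pairwise disjoint in $T^*X\setminus0$, giving the asserted decomposition.

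I expect the main obstacle to be purely computational: reducing the double curl term $(\nabla\times(\nabla\times(\beta\times H)))\times H$ to its symbol without sign errors, and then matching the $2\times2$ block determinant with $(\tau^2-c_s^2)(\tau^2-c_f^2)$ — this is exactly where the explicit formulas for $c_s^2$ and $c_f^2$ are pinned down, and it is routine but error prone. Once the identity $h^2|\xi|^2-b^2=(\xi\cdot H)^2/\rho$ is noticed, the disjointness step is immediate, and this is also where the hypotheses $\xi\cdot H\neq0$ and $\xi\times H\neq0$ enter decisively.
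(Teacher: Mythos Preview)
Your proof is correct and follows essentially the same route as the paper: both split $\mathbb R^3$ into the Alfv\'en direction $\operatorname{span}\{\xi\times H\}$ and the magnetosonic plane $\operatorname{span}\{\xi,H\}$, compute the $1\times1$ and $2\times2$ block determinants to obtain $\det p_2=q_1q_2q_3$, and then prove disjointness by evaluating the magnetosonic quadratic at the Alfv\'en value to get a strictly negative sign. The only cosmetic difference is that the paper first rotates to an orthonormal frame with $H$ along $\hat e_z$ (so the blocks are the $y$-component versus the $x$-$z$ components), whereas you work directly in the non-orthogonal basis $\{\xi,H,\xi\times H\}$; your formulation is coordinate-free but the underlying argument is identical.
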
 
	\begin{proof}	We have
		\begin{align}
		p_2\beta=\rho\tau^2 \beta-\gamma p\xi(\xi\cdot \beta)-(\xi\times(\xi\times(\beta\times H)))\times H,\label{9}
		\end{align}
		with $p_2$ is the principal symbol of $P$.
		Considering $v=H/{\sqrt{\rho}}$, equation (\ref{9}) can be written as 
		\begin{align}
		\rho(\tau^2- (\xi\cdot v)^2)\beta=\rho((c^2+h^2)(\xi\cdot\beta)-(v\cdot\beta)(\xi\cdot v))\xi-\rho v(\xi\cdot\beta)(\xi\cdot v).\label{10}
		\end{align}
		Without loss of generality, let $v=\lvert H\rvert/\sqrt{\rho} \ \hat{e}_z$, $\xi=\xi_{\perp}\hat{e}_x+\xi_{\parallel}\hat{e}_z$ with $\xi^2=\xi_{\perp}^2+\xi_{\parallel}^2$, and $\beta=\beta_x\hat{e}_x+\beta_y\hat{e}_y+\beta_z\hat{e}_z$ with $\hat{e}_x$, $\hat{e}_y$, and $\hat{e}_z$ are unit vectors that points in the direction of the x-axis, y-axis, and z-axis respectively. Substituting this in equation (\ref{10}), we find 
		\begin{subequations}
			\begin{align}
			&\text{x-component:}\ \  \rho\tau^2\beta_x=\rho c^2\xi_{\parallel}\xi_{\perp}\beta_z+\rho (h^2\xi^2+c^2\xi_{\perp}^2)\beta_x,\label{11a}
			\\&\text{y-component:}\ \ 
			\rho\tau^2\beta_y=\rho h^2\xi_{\parallel}^2\beta_y,\label{11b}
			\\& \text{z-component:}\ \ \rho\tau^2\beta_z=\rho(c^2\xi_{\parallel}\xi_{\perp})\beta_x+\rho c^2\xi_{\parallel}^2\beta_z.\label{11c}	
			\end{align}
		\end{subequations}
		Notice that the y-component decouples from the x- and z-components. This immediately gives 
		\begin{align}
		\rho\tau^2=\rho h^2\xi_{\parallel}^2,
		\end{align}
		This is the shear Alfv\'{e}n wave.
		The characteristic equation for the coupled x-and z-component is 
		\begin{align}\label{8}
		\rho^2\tau^4-\rho^2\tau^2\xi^2(c^2+h^2)+\rho^2 c^2h^2\xi_{\parallel}^2\xi^2=0.
		\end{align}
		Hence, we get 
		\begin{align}
		\rho\tau^2=\frac{\rho}{2}\bigg((c^2+h^2)\xi^2\pm\sqrt{(c^2-h^2)\xi^4+4c^2h^2\xi_{\perp}^2\xi^2}\bigg).
		\end{align}
		Still we want to prove that $\{q_1=0\}$, $\{q_2=0\}$, and $\{q_3=0\}$ are disjoint. Dividing $(\ref{8})$ by $\rho^2$, it can be written as
		\begin{align}
		(\tau^2- h^2 \xi_{\parallel}^2)( \tau^2- c^2\xi^2)-\tau^2h^2\xi_{\perp}^2.
		\end{align}
		Consider $R(X)=(X^2- h^2 \xi_{\parallel}^2)(X^2- c^2\xi^2)-X^2  h^2\xi_{\perp}^2
		$, $\{R\leq 0\}=[c_s^2,c_f^2]$ and $R(X)\leq 0$ for $X\in[\min (h^2\xi_{\parallel}^2,c^2\xi^2),$\\$\max(h^2\xi_{\parallel}^2,c^2\xi^2)]$. Thus, 
		\begin{subequations}
			\begin{align}
			&c_f^2\geq\max(h^2\xi_{\parallel}^2,c^2\xi^2)\geq h^2\xi_{\parallel}^2,\\
			&c_s^2\leq\min(h^2\xi_{\parallel}^2,c^2\xi^2)\leq h^2\xi_{\parallel}^2.
			\end{align}
			As $h^2\xi_{\parallel}^2\neq 0$, we have $R(h^2\xi_{\parallel}^2)=-h^2\xi_{\parallel}^2c^2\xi^2\leq 0$. Hence, $c_s^2< h^2\xi_{\parallel}^2< c_f^2$.
		\end{subequations} 
	\end{proof}
	Suppose that the conditions of Lemma $\ref{lemma char disjoint}$ are satisfied. Now, we are interested in calculating the transport equation as in \cite{hansenlagrangiansolutions}. 
	The full symbol of $P$ is $p_2+p_1+p_0$, with $p_2$ is the principal symbol of $P$ homogeneous of degree $2$, and $p_1$ and $p_0$ are homogeneous terms of degree $1$ and $0$ respectively. One can check that the principal symbol of $P$ is   
	\begin{align}\label{principal symbol mathematika}
	p_2=(\rho\tau^2-(H.\xi)^2)\operatorname{Id}_3-(\gamma p+\lvert H\rvert^2)\xi\otimes\xi+(H.\xi)(\xi\otimes H+H\otimes \xi),
	\end{align}
	and 
	\begin{align*}
	p_1=&i\gamma (\nabla p)\otimes \xi+i\xi\otimes(\nabla p)+i(\nabla (H\cdot \xi)\cdot H+(H\cdot\xi)\operatorname {div} H)\operatorname{Id}_3+\frac{i}{2}(\xi\otimes\nabla \lvert H\rvert^2)+i(\nabla \lvert H\rvert^2\otimes \xi)\\
	&-2i(H\cdot\nabla)(H\otimes\xi)-i(\nabla(H\cdot\xi))\otimes H-i(H\cdot\xi)(\nabla\otimes H)-i(\nabla\cdot H)(\xi\otimes H).
	\end{align*} 
	Using $(\ref{condition equilibrium})$, we get 
	\begin{align}\label{p1 mathematika}
	\begin{split}
	p_1&=i\gamma (\nabla p)\otimes \xi+i\xi\otimes(\nabla p)-i(\nabla p)\otimes\xi+i(\nabla (H\cdot \xi)\cdot H+(H\cdot\xi)\operatorname {div} H)\operatorname{Id}_3+\frac{i}{2}(\xi\otimes\nabla \lvert H\rvert^2)\\
	&+\frac{i}{2}(\nabla \lvert H\rvert^2\otimes \xi)-i(H\cdot\nabla)(H\otimes\xi)-i(\nabla(H\cdot\xi))\otimes H-i(H\cdot\xi)(\nabla\otimes H)-i(\nabla\cdot H)(\xi\otimes H).
	\end{split}
	\end{align}
	One can check $(\ref{principal symbol mathematika})$, $(\ref{p1 mathematika})$, and the calculations given below by using "mathematica" for example.
	
	Let $\Gamma_1$, $\Gamma_2$, and $\Gamma_3$ be disjoint conic neighborhoods of $\{q_1=0\}$, $\{q_2=0\}$, and $\{q_3=0\}$ respectively. Set $q=q_1$ in $\Gamma_1$, $q=q_2$ in $\Gamma_2$, and $q=q_3$ in $\Gamma_3$.
	\begin{prop}
		$P$ is of real principal type with respect to the Hamilton field $H_q$ of $\operatorname{Char} P=\{q=0\}$.
	\end{prop}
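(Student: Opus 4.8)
The plan is to verify Definition \ref{def systems of real principal type} directly, by producing near every point of $\operatorname{Char}P$ a symbol $\tilde p_2$ with $\tilde p_2\,p_2=q\cdot\operatorname{Id}_3$, where $q$ is one of $q_1,q_2,q_3$ and is scalar of real principal type. The key preliminary observation is that the block reduction performed in the proof of Lemma \ref{lemma char disjoint} simultaneously computes $\det p_2$: after the orthogonal change of frame used there, $p_2$ becomes block diagonal with a $1\times1$ block equal to $q_1$ and a $2\times2$ block whose determinant was shown to be $q_2q_3$, and since the determinant is invariant under conjugation,
\begin{align*}
\det p_2=q_1\,q_2\,q_3 .
\end{align*}
By Lemma \ref{lemma char disjoint} the three factors vanish on pairwise disjoint conic sets, so on the neighbourhood $\Gamma_j$ of $\{q_j=0\}$ the product $e_j:=\prod_{k\ne j}q_k$ is elliptic and $\operatorname{Char}P\cap\Gamma_j=\{q_j=0\}$.

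Next I would check that each $q_j$ is a scalar symbol of real principal type in the sense of Definition \ref{def real principal type scalar}. All three are real and homogeneous of degree $2$. Under the standing hypotheses $c^2=\gamma p/\rho>0$, $|H|^2\ne\rho c^2$, $\xi\cdot H\ne0$ and $\xi\times H\ne0$, the discriminant $(c^2-h^2)^2\xi^4+4b^2c^2\xi^2$ is strictly positive for $\xi\ne0$, so $c_s^2$ and $c_f^2$ are smooth and distinct; moreover their sum $(c^2+h^2)\xi^2$ and their product $c^2(\xi\cdot H)^2|\xi|^2/\rho$ are both positive, hence $c_s^2>0$ and $c_f^2>0$. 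Consequently on each sheet $\{q_j=0\}$ one has $\tau^2$ equal to $(H\cdot\xi)^2/\rho$, $c_s^2$ or $c_f^2$, all positive, so $\tau\ne0$ there and $\partial_\tau q_j=2\rho\tau\ne0$. This shows at once that $\{q_j=0\}$ is a smooth hypersurface on which $H_{q_j}$ is nonzero (its $\partial_t$-component equals $2\rho\tau$) and is not proportional to the radial direction $\langle\xi,\partial_\xi\rangle$.

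It then remains to assemble the factorization: on $\Gamma_j$ put $\tilde p_2:=e_j^{-1}\,{}^{t}p_2^{\operatorname{co}}$, where ${}^{t}p_2^{\operatorname{co}}$ is the adjugate of $p_2$ and $e_j$ is elliptic there, so that $\tilde p_2$ is a smooth symbol, homogeneous of degree $0$. Since ${}^{t}p_2^{\operatorname{co}}\,p_2=(\det p_2)\operatorname{Id}_3=q_1q_2q_3\operatorname{Id}_3$, we get $\tilde p_2\,p_2=q_j\operatorname{Id}_3$, with $q:=q_j$ of real principal type by the previous paragraph. Taking $q=q_1$ on $\Gamma_1$, $q=q_2$ on $\Gamma_2$ and $q=q_3$ on $\Gamma_3$, Definition \ref{def systems of real principal type} is satisfied at every point of $\operatorname{Char}P$, so $P$ is of real principal type with respect to $H_q$, as claimed.

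The only obstacle, such as it is, lies in the bookkeeping: one must make sure that the two block determinants coming out of the proof of Lemma \ref{lemma char disjoint} are exactly $q_1$ and $q_2q_3$ (matching every power of $\rho$), rather than merely proportional to them, and one must check positivity of $c_s^2$ and $c_f^2$ together with nonvanishing of the discriminant — this is precisely where the hypotheses $|H|^2\ne\rho c^2$ and $\xi\cdot H\ne0$ enter. Once these points are settled, the statement is a routine application of Definitions \ref{def real principal type scalar} and \ref{def systems of real principal type}.
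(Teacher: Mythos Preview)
Your proof is correct and follows essentially the same strategy as the paper: construct $\tilde p_2$ on each $\Gamma_j$ so that $\tilde p_2 p_2 = q_j\operatorname{Id}_3$. The difference is one of presentation. You argue abstractly via the adjugate identity ${}^t p_2^{\operatorname{co}}\,p_2=(\det p_2)\operatorname{Id}_3$ together with the factorization $\det p_2=q_1q_2q_3$, whereas the paper writes down $\tilde p_2$ explicitly in terms of $\xi\otimes\xi$, $\xi\otimes H+H\otimes\xi$, and $w^2=|H|^2\xi\otimes\xi+|\xi|^2H\otimes H-(H\cdot\xi)(H\otimes\xi+\xi\otimes H)$. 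These are the same object: once one notes (as the paper does in the subsequent Remark) that $p_2=q_1\pi_1+q_2\pi_2+q_3\pi_3$ with rank-one projectors, the adjugate is $q_2q_3\pi_1+q_1q_3\pi_2+q_1q_2\pi_3$, and dividing by $e_j$ recovers exactly the paper's formulas. Your route is cleaner and avoids any computation, while the paper's explicit expressions are what is actually needed in the transport-equation calculations that follow. You also supply the verification that each $q_j$ is scalar of real principal type (via $\partial_\tau q_j=2\rho\tau\neq 0$ on $\{q_j=0\}$), which the paper simply asserts; one small remark is that the hypothesis $|H|^2\neq\rho c^2$ is not needed for the positivity of the discriminant once $\xi\times H\neq 0$ is assumed, so your closing comment slightly overstates its role here.
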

	\begin{proof}
		We have $q_1$, $q_2$, and $q_3$ are scalar real principal type.  Let
		\begin{align}
		w^2:= \lvert H\rvert^2\xi\otimes\xi+\lvert \xi\rvert^2H\otimes H-(H\cdot \xi)(H\otimes \xi+\xi\otimes H)
		\end{align}
		In $\Gamma_1$  we take 
		\begin{align}
		\tilde{p}_2=\operatorname{Id}_3+\frac{q_1}{q_2 q_3}(\gamma p+\lvert H\rvert^2)\xi\otimes \xi-\frac{q_1}{q_2 q_3}(&(H\cdot\xi)(\xi\otimes H+H\otimes\xi))+\frac{(H\cdot \xi)^2 w^2}{q_2 q_3}
		\end{align}
		to get $\tilde{p}_2p_2=q_1\operatorname{Id}_3$.
		In $\Gamma_2$ we take 
		\begin{align}
		\tilde{p}_2=\frac{q_2}{q_1}\operatorname{Id}_3+\frac{1}{ q_3}(\gamma p+\lvert H\rvert^2)\xi\otimes \xi-\frac{1}{ q_3}(&(H\cdot\xi)(\xi\otimes H+H\otimes\xi))+\frac{(H\cdot \xi)^2 w^2}{q_1 q_3}
		\end{align}
		to get $\tilde{p}_2p_2=q_2\operatorname{Id}_3$. In $\Gamma_3$ we take 
		\begin{align}
		\tilde{p}_2=\frac{q_3}{q_1}\operatorname{Id}_3+\frac{1}{ q_2}(\gamma p+\lvert H\rvert^2)\xi\otimes \xi-\frac{1}{ q_2}(&(H\cdot\xi)(\xi\otimes H+H\otimes\xi))+\frac{(H\cdot \xi)^2 w^2}{q_1 q_2}
		\end{align}
		to get $\tilde{p}_2p_2=q_3\operatorname{Id}_3$.\end{proof} 
	\begin{remark}
		$\bullet$ The principal symbol of $P$ calculated before 
		can be written as 
		\begin{align}
		p_2=q_1 \pi_1+q_2\pi_2+q_3\pi_3,
		\end{align}
		where\begin{align}
		\pi_1=\operatorname{Id}_3+\frac{ w^2}{(H\cdot \xi)^2-\lvert H\rvert^2\lvert\xi\rvert^2},
		\end{align}
		\begin{align}
		\pi_2=\frac{1}{ \rho c_s^2(x,\xi)-\rho c_f^2(x,\xi)}\bigg((\gamma p+\lvert H\rvert^2)\xi\otimes \xi-(H\cdot\xi)(\xi\otimes H+H\otimes\xi)+\frac{(H\cdot \xi)^2 w^2}{\rho c_s^2(x,\xi)-(H\cdot \xi)^2}\bigg),
		\end{align}
		and
		\begin{align}
		\pi_3=\frac{1}{ \rho c_f^2(x,\xi)-\rho c_s^2(x,\xi)}\bigg((\gamma p+\lvert H\rvert^2)\xi\otimes \xi-(H\cdot\xi)(\xi\otimes H+H\otimes\xi)+\frac{(H\cdot \xi)^2 w^2}{\rho c_f^2(x,\xi)-(H\cdot \xi)^2}\bigg),
		\end{align}
		with $\pi_1,\pi_2$ and $\pi_3$ are orthogonal projectors and $\pi_1+\pi_2+\pi_3=\operatorname{Id}_3$\\
		$\bullet$ In $\Gamma_1$,
		\begin{align}
		\tilde{p}_2=\pi_1+\frac{q_1}{q_2}\pi_2+\frac{q_1}{q_3}\pi_3,
		\end{align}
		and set $\pi=\pi_1$ .
		\\In $\Gamma_2$, 
		\begin{align}
		\tilde{p}_2=\pi_2+\frac{q_2}{q_1}\pi_1+\frac{q_2}{q_3}\pi_3,
		\end{align}
		and set $\pi=\pi_2$.\\
		In $\Gamma_3$,
		\begin{align}
		\tilde{p}_2=\pi_3+\frac{q_3}{q_1}\pi_1+\frac{q_3}{q_2}\pi_2,
		\end{align}
		and set $\pi=\pi_3$.\\
		On $\operatorname{Char}P$, $\tilde{p}_2=\pi$, $p_2\pi=0=\pi p_2$, and $p_2a=0$ if and only if $a=\pi a$.
	\end{remark}
	In what follows, let $X=\mathbb R\times\mathbb R^3$, $\Lambda\subset T^*X\setminus 0$ be a closed Lagrangian submanifold of the characteristic set of $P$, and let $\Omega_{\Lambda}^{1/2}$ denote the half-density bundle of $\Lambda$. $S^{\mu+1}(\Lambda, (\Omega_\Lambda^{1/2})^3)$ is the space of symbols of the space of Lagrangian distributions $I^\mu(X,\Lambda;(\Omega_X^{1/2})^3)$; see \cite[Section 25.1]{hormander4} to read more about Lagrangian distributions.\\
	From \cite{hansenlagrangiansolutions}, we know that there is a first order differential operator $\mathcal T_{P,H_q}$ on $\Lambda$, uniquely determined by $P$ and $H_q$ which maps $a$ a  $3$-vector of half densities with $p_2 a=0$ to $3$-vector of half densities where 
	\begin{align}
	\mathcal T_{P,H_q} a=\mathcal L_{H_q}a+\frac{1}{2}\{\tilde{p}_2,p_2\}a+i\tilde{p}_2p^sa.
	\end{align}
 Here
 \begin{align}\label{subprincipal symbol}
p^s=p_1-\frac{1}{2i}\sum_j \frac{\partial^2 p_2}{\partial x_j\partial\xi_j},
\end{align}
is the subprincipal symbol of $P$, $\mathcal  L_{H_q}$ is the Lie derivative with respect to $H_q$, and $\{.\}$ is the Poisson bracket.
	\begin{lemma}
		$\tilde{p}_2 p^s \pi=0$ on $\operatorname{Char}P$.
	\end{lemma}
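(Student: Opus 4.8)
The plan is to show that $\tilde{p}_2 p^s \pi = 0$ on $\operatorname{Char}P$ by reducing it to the single scalar identity $\pi p^s \pi = 0$, using the facts recorded in the Remark that $\tilde{p}_2 = \pi$ and $p_2 \pi = \pi p_2 = 0$ on $\operatorname{Char}P$. Indeed, on $\operatorname{Char}P$ we have $\tilde{p}_2 p^s \pi = \pi p^s \pi$, so it suffices to show $\pi p^s \pi = 0$, i.e. that $p^s$ maps $\ker p_2 = \operatorname{Im}\pi$ into $(\operatorname{Im}\pi)^{\perp} = \operatorname{Im}(\operatorname{Id}_3 - \pi)$. Since $\pi$ is a rank-one orthogonal projector onto a line spanned by some real polarization vector $e(x,\xi)$ (determined up to scaling by $p_2 e = 0$), the claim is equivalent to the scalar statement $\langle p^s e, e\rangle = 0$, or rather that the quadratic form $\langle p^s(x,\xi) v, v\rangle$ vanishes when $v \in \ker p_2(x,\xi)$.

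First I would write out $p^s = p_1 - \tfrac{1}{2i}\sum_j \partial^2_{x_j \xi_j} p_2$ explicitly using the formulas $(\ref{principal symbol mathematika})$ and $(\ref{p1 mathematika})$ for $p_2$ and $p_1$. The key structural observation is that $p_2$ is built from the symmetric tensors $\operatorname{Id}_3$, $\xi \otimes \xi$, and $\xi \otimes H + H \otimes \xi$ with scalar coefficients; consequently its mixed second derivatives $\sum_j \partial_{x_j}\partial_{\xi_j} p_2$, and likewise $p_1$, are (apart from the $i$ factors) again built from $\operatorname{Id}_3$, symmetrized tensor products involving $\xi$, $H$, $\nabla p$, $\nabla|H|^2$, $\nabla(H\cdot\xi)$, and the matrix $\nabla \otimes H$. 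For each such building block $T$ I would compute $\langle T v, v\rangle$ for $v \in \ker p_2$ and show the total vanishes. The two mechanisms that make terms vanish are: (i) any term of the form (symmetrization of) $\xi \otimes b$ contributes $\langle \xi, v\rangle \langle b, v\rangle$ plus its conjugate, and on each sheet one checks from $(\ref{10})$/$(\ref{11a})$--$(\ref{11c})$ how $\langle \xi, v\rangle$ and $\langle H, v\rangle$ are constrained — in particular on the Alfvén sheet $\{q_1 = 0\}$ one has $\langle \xi, v\rangle = \langle H, v\rangle = 0$, killing all such terms outright; (ii) on the magnetosonic sheets one uses the equilibrium relation $(\ref{condition equilibrium})$, $\nabla p = -H \times \operatorname{curl} H$, together with homogeneity (Euler's relation in $\xi$) to collapse the surviving combinations.

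The cleanest way to organize the computation is to treat the three conic regions $\Gamma_1, \Gamma_2, \Gamma_3$ separately, since in each $\pi = \pi_j$ has an explicit closed form and $\ker p_2$ is a known line. In $\Gamma_1$ the polarization is $v \parallel \xi \times H$ (the "shear" direction $\hat e_y$ in the normalized frame of the proof of Lemma $\ref{lemma char disjoint}$), so $\langle \xi, v\rangle = \langle H, v\rangle = 0$ and only the scalar multiple of $\operatorname{Id}_3$ in $p^s$ survives; that coefficient is $i\bigl(\nabla(H\cdot\xi)\cdot H + (H\cdot\xi)\operatorname{div}H\bigr)$ minus a derivative-of-$p_2$ term, and I would check it reduces to something annihilated after pairing, or that it is purely the derivative of $q_1$ along $H_{q_1}$ which vanishes on $\operatorname{Char}$. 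In $\Gamma_2$ and $\Gamma_3$, the polarization lies in $\operatorname{span}\{\xi, H\}$, so one must genuinely use the dispersion relations and the equilibrium condition; here I would exploit that $\tilde{p}_2 = \pi$ and $\pi p_2 = 0$ mean the identity is invariant under adding any multiple of $p_2$ to $p^s$, allowing one to simplify $p^s$ modulo $p_2$ before pairing.

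The main obstacle will be the bookkeeping in $\Gamma_2$ and $\Gamma_3$: there the kernel is two-vectors' worth of constraint rather than an orthogonality that trivially kills tensor terms, so one cannot avoid substituting the explicit $c_s^2, c_f^2$ relations and invoking $(\ref{condition equilibrium})$ to see the cancellation. I expect that after using homogeneity and the equilibrium identity, $p^s$ restricted to $\ker p_2$ becomes (a scalar times) the subprincipal symbol of the scalar factor $q_j$, which is real; but $p^s$ also contains the genuinely matricial pieces $-i(H\cdot\nabla)(H\otimes\xi)$, $-i(H\cdot\xi)(\nabla\otimes H)$, $-i(\nabla\cdot H)(\xi\otimes H)$ and the corresponding mixed derivatives of $p_2$, and verifying that their combined pairing against $v\in\ker p_2$ vanishes is the delicate step — this is where a symbolic computation (as the authors suggest checking "using mathematica") is essentially unavoidable, and I would present the reduction and then assert the cancellation, referencing that explicit check.
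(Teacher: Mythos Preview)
Your reduction is correct and matches the paper's first step: on $\operatorname{Char}P$ one has $\tilde{p}_2 p^s \pi = \pi p^s \pi$, and since $\pi$ is a rank-one orthogonal projector, $\pi p^s \pi = 0$ is equivalent to the scalar identity $\langle p^s e, e\rangle = 0$ for $e$ spanning $\ker p_2$. Where you diverge is in how you propose to verify this scalar identity: you plan a sheet-by-sheet analysis on $\Gamma_1,\Gamma_2,\Gamma_3$, invoking the explicit polarization directions, the equilibrium relation $(\ref{condition equilibrium})$, and ultimately a symbolic check you admit you would only ``assert''.

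The paper bypasses all of this with one structural observation you missed: after computing $\sum_j \partial^2_{x_j\xi_j} p_2$ and combining with $p_1$, the matrix $2ip^s$ is \emph{skew-symmetric} (every term is of the form $a\otimes b - b\otimes a$ or $(\nabla\otimes H) - (\nabla\otimes H)^{\intercal}$; the $\operatorname{Id}_3$ contributions from $p_1$ and from the derivative of $p_2$ cancel exactly). Since $\pi$ is symmetric, $2i\,\pi p^s \pi$ is then skew-symmetric as well; but $\operatorname{rank}\pi = 1$ forces $\operatorname{rank}(\pi p^s \pi)\leq 1$, and a nonzero real skew-symmetric matrix has rank at least $2$. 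Hence $\pi p^s \pi = 0$ on every sheet simultaneously, with no case analysis, no use of the specific dispersion relations for $c_s^2, c_f^2$, and no appeal to the equilibrium condition beyond what already went into the formula for $p_1$. Equivalently, in your scalar language: $\langle (2ip^s)e,e\rangle = 0$ for \emph{any} real vector $e$, not just those in $\ker p_2$. Your route would eventually succeed, but the paper's argument is both shorter and conceptually cleaner, and it explains \emph{why} the cancellation you were going to check symbolically is automatic.
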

	\begin{proof}
		Differentiating $p_2$ we get
		\begin{align*}
		\sum_{j=1}^{3}\frac{\partial^2 p_2}{\partial x_j\partial\xi_j}=&-2(\nabla(H\cdot\xi)\cdot H+(H\cdot\xi)\operatorname{div}H)\operatorname{Id}_3-(\gamma\nabla p+\nabla(\lvert H\rvert^2))\otimes\xi-\xi\otimes(\gamma \nabla p+\nabla \lvert H\rvert^2)
		\\&+(\operatorname{div} H+ H\cdot\nabla)(\xi\otimes H+H\otimes\xi)+(\nabla(H\cdot \xi))\otimes H+(H\cdot \xi)(\nabla\otimes H+(\nabla\otimes H)^\intercal)\\&+H\otimes(\nabla(H\cdot\xi))	
		\end{align*}
		Therefore, the subprincipal is given as follows:
		\begin{align*}
		2ip^s=&\gamma(\xi\otimes\nabla p-\nabla p\otimes\xi)+\operatorname{div}H(\xi\otimes H-H\otimes\xi)+(H\cdot\xi)(-(\nabla\otimes H)^\intercal+(\nabla\otimes H))\\&+(H\cdot\nabla)(H\otimes\xi-\xi\otimes H)+((\nabla(H\cdot\xi))\otimes H-H\otimes\nabla(H\cdot\xi))+2(\nabla p\otimes\xi-\xi\otimes\nabla p).
		\end{align*}
		We have
		\begin{align}
		2i\tilde{p}_2 p^s\pi=2i\pi p^s\pi\ \ \ \ \text{on} \operatorname{Char}P.
		\end{align}
		Using that $2ip^s$ is a $3\times 3$ skew-symmetric matrix with zero entries on the diagonal, and $\pi$ is a symmetric matrix we get that $2i\pi p^s\pi$ is a $3\times 3$ skew-symmetric matrix. Therefore to prove that it vanishes, it suffices to show that its rank is $<2$. Since $\pi$ is projection we have rank $\pi$ =trace $\pi$. Calculating the trace of $\pi$ we get that rank $\pi$=1 and hence we proved the lemma.
	\end{proof}
	\begin{lemma}
		Let $\Lambda\subset \operatorname{Char}P$ be a conic Lagrangian submanifold. Let $a\in S^{\mu+1}(\Lambda,(\Omega_\Lambda^{1/2})^3)$. Then, $\{\tilde{p}_2,p_2\}\pi a=-2(H\pi)a$ on $\operatorname{Char}P$.
	\end{lemma}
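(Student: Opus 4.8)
The plan is to compute the matrix $\{\tilde p_2,p_2\}$ on $\operatorname{Char}P$ from the spectral decomposition $p_2=q_1\pi_1+q_2\pi_2+q_3\pi_3$ of the preceding remark, and then apply it to $\pi a$. It suffices to work in $\Gamma_1$, where $q=q_1$, $\pi=\pi_1$ and $\tilde p_2=\pi_1+\tfrac{q_1}{q_2}\pi_2+\tfrac{q_1}{q_3}\pi_3$; the regions $\Gamma_2,\Gamma_3$ are identical after relabelling. Introduce the reduced resolvent $p_2^{\#}:=\pi_2/q_2+\pi_3/q_3$; since $\pi_i\pi_j=\delta_{ij}\pi_i$, on $\operatorname{Char}P=\{q=0\}$ one has $p_2p_2^{\#}=p_2^{\#}p_2=\operatorname{Id}_3-\pi$, $\pi p_2^{\#}=p_2^{\#}\pi=0$, and $p_2\pi=q\pi$ holds throughout $\Gamma_1$. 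Because $q_1\equiv 0$ on $\operatorname{Char}P$, differentiating the formula for $\tilde p_2$ and evaluating there gives $\partial_{x_j}\tilde p_2=\partial_{x_j}\pi+(\partial_{x_j}q)\,p_2^{\#}$, and likewise in the $\xi_j$ variables.

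\textbf{The computation.} Substitute these into $\{\tilde p_2,p_2\}=\sum_j\bigl(\partial_{\xi_j}\tilde p_2\,\partial_{x_j}p_2-\partial_{x_j}\tilde p_2\,\partial_{\xi_j}p_2\bigr)$, multiply by $\pi$ on the right, and simplify using three elementary identities valid on $\operatorname{Char}P$: differentiating $p_2\pi=q\pi$ gives $(\partial_{x_j}p_2)\pi=(\partial_{x_j}q)\pi-p_2\,\partial_{x_j}\pi$; the derivative of the spectral projector satisfies $\partial_{x_j}\pi=-\pi(\partial_{x_j}p_2)p_2^{\#}-p_2^{\#}(\partial_{x_j}p_2)\pi$ (with the analogous $\xi_j$-formula); and $\pi^2=\pi$ forces $(\partial\pi)\pi+\pi(\partial\pi)=\partial\pi$, hence $(\partial\pi)\pi+(\operatorname{Id}_3-\pi)(\partial\pi)=2(\partial\pi)\pi$. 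The terms carrying a factor $\partial q$ collect, via the last identity, into
\[
2\Bigl(\textstyle\sum_j(\partial_{x_j}q)\,\partial_{\xi_j}\pi-(\partial_{\xi_j}q)\,\partial_{x_j}\pi\Bigr)\pi=-2(H_q\pi)\,\pi ,
\]
where $H_q\pi$ denotes the matrix $\pi$ differentiated along the Hamilton field $H_q$ of $q$. The remaining terms are
\[
X:=\sum_j\bigl[(\partial_{x_j}\pi)\,p_2\,(\partial_{\xi_j}\pi)-(\partial_{\xi_j}\pi)\,p_2\,(\partial_{x_j}\pi)\bigr],
\]
and using the first two identities (namely $(\partial_{x_j}\pi)p_2=-\pi(\partial_{x_j}p_2)(\operatorname{Id}_3-\pi)$ and $(\operatorname{Id}_3-\pi)\partial_{\xi_j}\pi=-p_2^{\#}(\partial_{\xi_j}p_2)\pi$) one rewrites $X=\sum_j\pi\bigl[(\partial_{x_j}p_2)p_2^{\#}(\partial_{\xi_j}p_2)-(\partial_{\xi_j}p_2)p_2^{\#}(\partial_{x_j}p_2)\bigr]\pi$.

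\textbf{Vanishing of $X$ and conclusion.} The crux is that $X=0$. On the real cotangent fibre $p_2$, $p_2^{\#}$ and $\pi=\pi_1$ are real symmetric matrices ($p_2$ manifestly so from its explicit form in the preceding remark, and $\pi_1,\pi_2,\pi_3$ are orthogonal projectors), so transposing each summand of the first expression for $X$ and using $p_2^{\mathsf T}=p_2$, $(\partial\pi)^{\mathsf T}=\partial\pi$ gives $X^{\mathsf T}=-X$: the matrix $X$ is skew-symmetric. On the other hand $X=\pi M\pi$, and $\operatorname{rank}\pi=1$ by the lemma used just above (trace of $\pi$); writing $\pi=vv^{\mathsf T}$ with $|v|=1$ gives $X=(v^{\mathsf T}Mv)\,\pi$, a real multiple of the symmetric matrix $\pi$. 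A matrix that is simultaneously symmetric and skew-symmetric is $0$ — precisely the rank-one device already used in the previous lemma to kill a skew-symmetric term. Hence $\{\tilde p_2,p_2\}\pi=-2(H_q\pi)\pi$ on $\operatorname{Char}P$, and applying this to $a$ (so the right-hand factor $\pi$ acts on $\pi a$, which is all that intervenes since $p_2a=0$) yields $\{\tilde p_2,p_2\}\pi a=-2(H\pi)a$. The only genuine effort is the bookkeeping of the middle step — separating the $\partial q$-terms from those forming $X$ — which uses nothing beyond the projector identities already in play, the symmetry of $p_2$, and $\operatorname{rank}\pi=1$.
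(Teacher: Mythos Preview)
Your proof is correct, and it reaches the same destination as the paper but by a noticeably different route. The paper expands $\{\tilde p_2,p_2\}\pi_1 a$ term by term from the three-projector decomposition, using ad hoc identities such as $\pi_j\{q_1,\pi_j\}\pi_1=\{q_1,\pi_j\}\pi_1$, $\pi_2\{q_1,\pi_3\}\pi_1=0$, and the substitution $\pi_2=\operatorname{Id}_3-\pi_1-\pi_3$, to arrive at the residual $(q_3-q_2)\{\pi_1,\pi_3\}\pi_1 a$; this is then shown to equal $\pi_1\{\pi_1,\pi_3\}\pi_1 a$ and killed by the skew-symmetric/rank-one trick. You instead package $\pi_2,\pi_3$ into the reduced resolvent $p_2^{\#}$ and use the spectral-perturbation identity $\partial\pi=-\pi(\partial p_2)p_2^{\#}-p_2^{\#}(\partial p_2)\pi$ together with $(\partial p_2)\pi=(\partial q)\pi-p_2\,\partial\pi$ to separate the $\partial q$-contributions cleanly; your residual $X=\sum_j[(\partial_{x_j}\pi)p_2(\partial_{\xi_j}\pi)-(\partial_{\xi_j}\pi)p_2(\partial_{x_j}\pi)]=\pi M\pi$ is then dispatched by the same rank-one device.

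What each buys: your argument is more structural and would go through verbatim for any number of simple eigenvalues (only $\operatorname{rank}\pi=1$ and the symmetry of $p_2$ are used), whereas the paper's computation is tied to the explicit three-term splitting and requires tracking several cross-terms between $\pi_1,\pi_2,\pi_3$. One small remark: at the very end you invoke $p_2a=0$ to pass from $-2(H_q\pi)\pi a$ to $-2(H\pi)a$; this hypothesis is not stated in the lemma itself but is the ambient assumption of the surrounding discussion (and the paper uses it too, writing ``$\pi_1 a=a$ on $\operatorname{Char}P$''), so your appeal to it is legitimate.
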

	\begin{proof} We will prove the result for $\pi=\pi_1$ and the same argument applies for $\pi_2$ and $\pi_3$.
		We have in a conic neighborhood of $\{q_1=0\}$, $p_2=q_1\pi_1+q_2\pi_2+q_3\pi_3$, and $\tilde{p}_2=\pi_1+\frac{q_1}{q_2}\pi_2+\frac{q_1}{q_3}\pi_3$. Using that $\pi^2=\pi$, and $\{q_1,q_1\}=0$ we get
		\begin{align*}
		\{\tilde{p}_2,p_2\}\pi_1 a=&\{\pi_1,q_1\}\pi_1 a+q_2\{\pi_1,\pi_2\}\pi_1 a+q_3\{\pi_1,\pi_3\}\pi_1 a+\pi_2\{q_1,\pi_2\}\pi_1 a+\frac{q_3}{q_2}\pi_2\{q_1,\pi_3\}\pi_1 a\\&+\frac{q_2}{q_3}\pi_3\{q_1,\pi_2\}\pi_1 a+\pi_3\{q_1,\pi_3\}\pi_1 a.
		\end{align*}
		Using that $H\pi_j=H\pi_j^2=(H\pi_j)\pi_j+\pi_j(H\pi_j)$ for $j=2,3$, we get $\pi_j\{q_1,\pi_j\}\pi_1 a=\{q_1,\pi_j\}\pi_1 a$ for $j=2,3$, and using $0=H(\pi_2\pi_3)=\pi_2(H\pi_3)+(H\pi_2)\pi_3$ and $0=H(\pi_3\pi_2)=\pi_3(H\pi_2)+(H\pi_3)\pi_2$ we get $\pi_2\{q_1,\pi_3\}\pi_1 a=\pi_3\{q_1,\pi_2\}\pi_1 a=0$.
		\\Using $\pi_2=\operatorname{Id}_3-\pi_1-\pi_3$, $\{\pi_1,\operatorname{Id}_3\}=\{q_1,\operatorname{Id}_3\}=0$ and $\{\pi_1,\pi_1\}=0$ we get
		\begin{align*}
		\{\tilde{p}_2,p_2\}\pi_1 a=-2(H\pi_1)\pi_1 a+(q_3-q_2)\{\pi_1,\pi_3\}\pi_1 a.
		\end{align*}
		Now, we want to prove that $\{\pi_1,\pi_3\}\pi_1a=0$. We have $\partial_{\xi_i}\pi_1=\partial_{\xi_i}\pi_1^2=\pi_1\partial_{\xi_i}\pi_1+\partial_{\xi_i}\pi_1 \pi_1$, and similarly $\partial_{x_i}\pi_1=\partial_{x_i}\pi_1^2=\pi_1\partial_{x_i}\pi_1+\partial_{x_i}\pi_1 \pi_1$. Also, $0=\partial_{\xi_i}(\pi_1\pi_3)=\pi_1\partial_{\xi_i}\pi_3+\partial_{\xi_i}\pi_1\pi_3$. Hence, $\pi_1\partial_{\xi_i}\pi_3=-\partial_{\xi_i}\pi_1\pi_3$. Similarly, we have $\pi_1\partial_{x_i}\pi_3=-\partial_{x_i}\pi_1 \pi_3$. Combining these together we get 
		\begin{align*}
		\{\pi_1,\pi_3\}\pi_1 a=\pi_1\{\pi_1,\pi_3\}\pi_1 a.
		\end{align*}
		We have 
		\begin{align*}
		\pi_1\{\pi_1,\pi_3\}\pi_1 a=\sum_{i=1}^{3}\pi_1 (\partial_{\xi_i}\pi_1\partial_{x_i}\pi_3-\partial_{x_i}\pi_1\partial_{\xi_i}\pi_3)\pi_1 a.
		\end{align*}
		Moreover,
		\begin{align*}
		\pi_1\partial_{\xi_i}\pi_1\partial_{x_i}\pi_3\pi_1 a&=\pi_1\partial_{\xi_i}\pi_1(\pi_3\partial_x\pi_3+\partial_{x_i}\pi_3\pi_3)\pi_1 a=\pi_1(\partial_{\xi_i}\pi_1\pi_3)\partial_x\pi_3\pi_1 a=-\pi_1\partial_{\xi_i}\pi_3(\partial_{x_i}\pi_3\pi_1) a\\&=\pi_1(\partial_{\xi_i}\pi_3\pi_3)\partial_{x_i}\pi_1 a=\pi_1(\partial_{\xi_i}\pi_3-\pi_3\partial_{\xi_i}\pi_3)\partial_{x_i}\pi_1 a=\pi_1\partial_{\xi_i}\pi_3\partial_{x_i}\pi_1 a.
		\end{align*}
		Using that $\pi_1 a=a$ on $\operatorname{Char}P$, we get $\pi_1\partial_{\xi_i}\pi_1\partial_{x_i}\pi_3\pi_1 a=\pi_1\partial_{\xi_i}\pi_3\partial_{x_i}\pi_1 \pi_1 a$. Therefore, 
		\begin{align*}
		\pi_1\{\pi_1,\pi_3\}\pi_1 a=\sum_{i=1}^{3}\pi_1 (\partial_{\xi_i}\pi_3\partial_{x_i}\pi_1-\partial_{x_i}\pi_1\partial_{\xi_i}\pi_3)\pi_1 a.
		\end{align*}
		We have $\partial_{\xi_i}\pi_3\partial_{x_i}\pi_1-\partial_{x_i}\pi_1\partial_{\xi_i}\pi_3$ is a $3\times3$ skew-symmetric matrix with the entries in the diagonal equal to zero. So same as before we get $\pi_1\{\pi_1,\pi_3\}\pi_1 a=0$ as the rank of $\pi_1$ equal to $1$. Hence, the lemma is proved.
	\end{proof}
	\begin{prop}
		Let $\Lambda\subset\operatorname{Char}P$ be a conic Lagrangian submanifold. Let $a\in S^{\mu+1}(\Lambda,(\Omega_\Lambda^{1/2})^3)$ with $p_2 a=0$. Then
		\begin{align}
		\mathcal T_{P,H}a=\mathcal L_H a-(H\pi)a \ \ \ \ on \ \ \Lambda.
		\end{align}
	\end{prop}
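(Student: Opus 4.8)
The plan is to read the transport operator off the formula for $\mathcal T_{P,H}$ from \cite{hansenlagrangiansolutions}, namely
\[
\mathcal T_{P,H}a=\mathcal L_{H}a+\frac{1}{2}\{\tilde p_2,p_2\}a+i\tilde p_2 p^s a,\qquad H=H_q,
\]
and to simplify the two lower-order terms on $\Lambda$ using the hypothesis $p_2a=0$ together with the two lemmas just proved. So everything reduces to evaluating $\frac{1}{2}\{\tilde p_2,p_2\}a$ and $i\tilde p_2 p^s a$ at points of $\Lambda$.

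First I would note that, since $\Lambda\subset\operatorname{Char}P=\{q=0\}$ and the three characteristic sheets are pairwise disjoint by Lemma~\ref{lemma char disjoint}, on $\Lambda$ exactly one factor $q_j$ vanishes while the other two are nonvanishing; hence $\tilde p_2=\pi$ and $p_2\pi=\pi p_2=0$ there, and the equation $p_2a=0$ is equivalent to $a=\pi a$ on $\Lambda$. Substituting $a=\pi a$ into the Poisson-bracket term gives $\frac{1}{2}\{\tilde p_2,p_2\}a=\frac{1}{2}\{\tilde p_2,p_2\}\pi a$, which equals $-(H\pi)a$ on $\operatorname{Char}P$ by the lemma $\{\tilde p_2,p_2\}\pi a=-2(H\pi)a$. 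Substituting $a=\pi a$ into the subprincipal term gives $i\tilde p_2 p^s a=i\tilde p_2 p^s\pi a=0$ by the lemma $\tilde p_2 p^s\pi=0$ on $\operatorname{Char}P$. Adding these two identities to $\mathcal L_{H}a$ leaves $\mathcal T_{P,H}a=\mathcal L_{H}a-(H\pi)a$ on $\Lambda$, which is the claim.

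I do not expect a genuine obstacle here, since the work has already been front-loaded into the two preceding lemmas; the proof is essentially a substitution. The few points deserving a remark are: both lemmas are identities of matrix-valued symbols over $\operatorname{Char}P$, so they restrict freely to $\Lambda$; the replacement $a\mapsto\pi a$ inside $\mathcal T_{P,H}$ is legitimate because this operator is only required to act on symbols with $p_2a=0$, for which $a=\pi a$ holds identically on $\Lambda$; and $\pi$ is to be understood as the projector $\pi_j$ attached to the sheet $\{q_j=0\}\supset\Lambda$, consistently with the choice of $q$ and $\tilde p_2$ in the corresponding conic neighbourhood $\Gamma_j$.
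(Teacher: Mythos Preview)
Your proof is correct and matches the paper's intended approach: the paper states the proposition immediately after the two lemmas without giving a separate proof, precisely because it follows by direct substitution of those lemmas into the formula $\mathcal T_{P,H}a=\mathcal L_{H}a+\tfrac{1}{2}\{\tilde p_2,p_2\}a+i\tilde p_2 p^s a$ together with $a=\pi a$ on $\operatorname{Char}P$.
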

	\subsection{Propagation of polarization sets for the linearized ideal MHD equations}\label{section application MHD type system}
	Note that $(\ref{2})$ is hyperbolic symmetric with symmetrizer 
	\begin{align}
	S=\begin{pmatrix}
	\gamma p&0&0&0&0&0&0&-\rho\\
	0&\rho&0&0&0&0&0&0\\
	0&0&\rho&0&0&0&0&0\\
	0&0&0&\rho&0&0&0&0\\
	0&0&0&0&1&0&0&0\\
	0&0&0&0&0&1&0&0\\
	0&0&0&0&0&0&1&0\\
	-\rho&0&0&0&0&0&0&\frac{1+\rho^2}{\gamma p}
	\end{pmatrix}.
	\end{align} The principal symbol of $(\ref{2})$ is 
	\begin{align}\label{4'}
	\begin{cases}
	\tau\dot{\rho} +\rho(\xi\cdot\dot{u})=0,\\
	\tau \dot{u}+\rho^{-1}\dot{p}\xi+\rho^{-1} H\times(\xi\times\dot{H})=0,
	\\\tau\dot{H}+(\xi\cdot\dot{u})H-(H\cdot\xi) \dot{u}=0,\\
	\tau\dot{p}+\gamma p(\xi\cdot\dot{u})=0.
	\end{cases}
	\end{align}
	We use here the notation $\xi=(\xi_1,\xi_2,\xi_3)$ for the spatial frequencies and $\xi=\lvert\xi\rvert\hat{\xi}$, $\dot{u}_{\parallel}=\hat{\xi}\cdot\dot{u}$, $\dot{u}_{\perp}=\dot{u}-\dot{u}_{\parallel}\hat{\xi}=-\hat{\xi}\times (\hat{\xi}\times \dot{u})$.
	\\ We write $(\ref{4'})$ in the general form $\tau\dot{U}+A(U,\xi)\dot{U}=0$ with parameters $U=(\rho,H,p)$, and $\dot{U}=(\dot{\rho},\dot{u},\dot{H},\dot{p})$.
	
	We have the following result
	\begin{lemma}\label{12}
		Assume that $c^2=\gamma p/\rho>0$. The eigenvalues of $A(U,\xi)$ are 
		\begin{align}
		\begin{cases}
		\lambda_0=\lambda_4=0,\\
		\lambda_{\pm 1}=\pm c_s(\hat{\xi})\lvert\xi\rvert,\\
		\lambda_{\pm 2}=\pm (\xi\cdot H)/\sqrt{\rho},\\
		\lambda_{\pm 3}=\pm c_f(\hat{\xi})\lvert\xi\rvert,\\
		\end{cases}
		\end{align}
		with $\hat{\xi}=\xi/\lvert\xi\rvert$ and 
		\begin{align}\label{cf hat}
		c_f^2(\hat{\xi}):=\frac{1}{2}\big((c^2+h^2)+\sqrt{(c^2-h^2)^2+4b^2c^2}\big),
		\end{align}
		\begin{align}\label{cs hat}
		c_s^2(\hat{\xi}):=\frac{1}{2}\big((c^2+h^2)-\sqrt{(c^2-h^2)^2+4b^2c^2}\big),
		\end{align}
		where $h^2=\lvert H\rvert^2/\rho$, $b^2=\lvert \hat{\xi}\times H\rvert^2/\rho.$
		
		Moreover, if we assume that $0<\lvert H\rvert^2\neq \rho c^2$, then we have\\
		$(i)$ When $\xi\cdot H\neq 0$ and $\xi\times H\neq 0$:\\
		$\lambda_0=\lambda_4$ is double eigenvalue of $A(U,\xi)$, and the eigenvalues $\lambda_{\pm 1},\lambda_{\pm 2}$ and $\lambda_{\pm 3}$ are simple eigenvalues  of $A(U,\xi)$.\\~~\\
		$(ii)$ When $\xi\cdot H=0$, $\xi\neq 0$:
		\\
		$\lambda_{\pm 3}$ are simple eigenvalues, while $\lambda_0=\lambda_{\pm 1}=\lambda_{\pm 2}=\lambda_4$ is a multiple eigenvalue.\\ ~~\\
		$(iii)$ When $\xi\times H=0$, $\xi\neq 0$: 
		\\ when $\lvert H\rvert^2<\rho c^2$ $(\text{resp.}\  \lvert H\rvert^2>\rho c^2)$, $\lambda_{\pm 3}$ $(\text{resp.}\  \lambda_{\pm 1})$ are simple; $\lambda_{+2}\neq \lambda_{-2}$ are double, equal to $\lambda_{\pm 1}$ $($resp. $\lambda_{\pm 3})$ depending on $\xi\cdot H$, $\lambda_0$ is double equal to $\lambda_4$.
	\end{lemma}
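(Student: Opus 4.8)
The plan is to read off the matrix $A(U,\xi)$ from the principal symbol equations $(\ref{4'})$ and to analyse the eigenvalue equation $A(U,\xi)\dot U=\lambda\dot U$ by separating the kernel from the nonzero eigenvalues. Writing $\dot U=(\dot\rho,\dot u,\dot H,\dot p)$, the first and last rows of $(\ref{4'})$ give $\rho(\xi\cdot\dot u)=\lambda\dot\rho$ and $\gamma p(\xi\cdot\dot u)=\lambda\dot p$, hence $\lambda(\gamma p\,\dot\rho-\rho\,\dot p)=0$. For $\lambda\neq 0$ this forces $\dot p=c^2\dot\rho=(\gamma p/\lambda)(\xi\cdot\dot u)$ and $\dot\rho=(\rho/\lambda)(\xi\cdot\dot u)$, while the magnetic row gives $\dot H=\lambda^{-1}\bigl((\xi\cdot\dot u)H-(H\cdot\xi)\dot u\bigr)$. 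Substituting these into the momentum row and expanding $H\times(\xi\times\dot H)$ by the double cross-product identity, the equation collapses (after multiplication by $\lambda$) to exactly $p_2(\lambda,\xi)\dot u=0$, with $p_2$ the principal symbol $(\ref{principal symbol mathematika})$ of the MHD wave operator. Since the map $\dot u\mapsto\dot U$ defined by the above relations is linear and injective (its $\dot u$-block is the identity), it restricts to an isomorphism of $\ker\bigl(A(U,\xi)-\lambda\bigr)$ onto $\ker p_2(\lambda,\xi)$ for every $\lambda\neq 0$; thus the nonzero eigenvalues of $A$, together with their geometric multiplicities, are governed entirely by $p_2$.

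Next I would compute $\ker A(U,\xi)$ directly. The rows of $(\ref{4'})$ at $\tau=0$ give $\xi\cdot\dot u=0$, then $(\dot p+H\cdot\dot H)\xi=(H\cdot\xi)\dot H$ from the momentum row and $(H\cdot\xi)\dot u=0$ from the magnetic row. If $H\cdot\xi\neq 0$ these force $\dot u=0$, $\dot p=0$ and $\dot H\parallel\xi$, so $\ker A$ is spanned by the entropy mode $(\dot\rho,0,0,0)$ and $(0,0,\xi,0)$ and is two-dimensional; this gives $\lambda_0=\lambda_4=0$ in general and the double zero in cases $(i)$ and $(iii)$ (in $(iii)$, $H\parallel\xi\neq 0$ forces $H\cdot\xi\neq 0$). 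If instead $H\cdot\xi=0$ (case $(ii)$) one gets only $\dot p=-H\cdot\dot H$, leaving $\dot\rho$, $\dot H$ and the plane $\{\xi\cdot\dot u=0\}$ free, so $\dim\ker A=6$.

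Finally I would combine the two analyses with Lemma $\ref{lemma char disjoint}$, whose determinant computation shows that $\det p_2(\tau,\xi)$ is a nonzero scalar multiple of $q_1q_2q_3$; hence $\ker p_2(\tau,\xi)\neq 0$ exactly when $\tau\in\{\pm(H\cdot\xi)/\sqrt\rho,\ \pm c_s(\hat\xi)|\xi|,\ \pm c_f(\hat\xi)|\xi|\}$, and $\dim\ker p_2(\tau,\xi)$ equals the number of factors $q_i$ vanishing there (read off from the multiplicities of the $q_i$ as polynomials in $\tau$, which also covers the points where the projectors $\pi_i$ of the Remark degenerate). This yields the eigenvalue list of the general statement, and for the refinements one only has to insert the explicit values of $c_s^2(\hat\xi)$, $c_f^2(\hat\xi)$ and $(H\cdot\xi)^2/\rho$ and check which coincide: in $(i)$ the three are distinct and positive, using $0<|H|^2\neq\rho c^2$, $\xi\cdot H\neq 0$, $\xi\times H\neq 0$ and the strict inequalities between $c_s^2$, $(H\cdot\xi)^2/\rho$ and $c_f^2$ established in the proof of Lemma $\ref{lemma char disjoint}$, so $A$ has six simple nonzero eigenvalues and the double zero; in $(ii)$, $\xi\perp H$ gives $b^2=h^2$, whence $c_s^2(\hat\xi)=0$ and $c_f^2(\hat\xi)=c^2+h^2$, so only $\lambda_{\pm 3}=\pm\sqrt{c^2+h^2}\,|\xi|$ stay nonzero and are simple, the rest collapsing into the sixfold kernel; in $(iii)$, $\xi\times H=0$ gives $b^2=0$, whence $\{c_s^2(\hat\xi),c_f^2(\hat\xi)\}=\{c^2,h^2\}$ ordered by the sign of $|H|^2-\rho c^2$, while $(H\cdot\xi)^2/\rho=h^2|\xi|^2$ coincides with one of them, producing the double pair $\lambda_{\pm 2}$ (equal to $\lambda_{\pm 1}$ or $\lambda_{\pm 3}$ according to the sign of $\xi\cdot H$), a simple pair, and the double zero. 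In each case the geometric multiplicities sum to $8$, which at once shows $A$ is diagonalizable and that no eigenvalue has been missed.

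The main obstacle I anticipate is purely computational: performing the reduction of the momentum row to $p_2(\lambda,\xi)\dot u=0$ with correct signs via the cross-product identities, and the bookkeeping in case $(iii)$, where several characteristic speeds coincide and one must track which eigenvalue of $A$ inherits which multiplicity; care is also needed because the projector formulas of the Remark become singular exactly when $b^2=0$ or $\xi\cdot H=0$, so one should argue from the factorization of $\det p_2$ rather than from the $\pi_i$ there.
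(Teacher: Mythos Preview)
Your argument is correct, and it is genuinely different from the paper's. The paper follows the classical MHD approach (as in M\'etivier--Zumbrun): after peeling off the one-dimensional subspace $\{\dot\rho=\dot u=\dot H_\perp=\dot p=0\}$ on which $A$ vanishes, it writes the remaining $7\times 7$ block explicitly in an adapted orthonormal basis of $\hat\xi^\perp$, with normalized variables $v=H/\sqrt\rho$, $\dot v=\dot H/\sqrt\rho$, $\dot\alpha=\dot p/\rho$, $\dot\sigma=\dot\rho/\rho$, and computes its characteristic polynomial $\lambda(\lambda^2-a^2)\bigl((\lambda^2-a^2)(\lambda^2-c^2)-\lambda^2 b^2\bigr)$ directly. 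The multiplicity statements in $(i)$--$(iii)$ are then read off from this factorization together with the elementary inequalities between $c_s^2$, $a^2$, $c^2$, $c_f^2$.

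Your route instead exploits the structure already built in the paper: for $\lambda\neq 0$ you eliminate $\dot\rho,\dot H,\dot p$ and land exactly on $p_2(\lambda,\xi)\dot u=0$, so the nonzero spectrum of $A$ and its geometric multiplicities are read off from the $3\times 3$ symmetric matrix $p_2$ and the factorization $\det p_2 \propto q_1q_2q_3$ obtained in the proof of Lemma~\ref{lemma char disjoint}; the symmetry of $p_2$ is what lets you equate algebraic and geometric multiplicity without further work. The kernel of $A$ you compute separately. This is cleaner conceptually and recycles the earlier analysis, whereas the paper's approach is self-contained and gives the explicit $7\times 7$ matrix, which is useful later. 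Both reach the same conclusions; your closing dimension count $\sum=8$ is a good sanity check that the paper leaves implicit.

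Two small points worth tightening in a final write-up: make explicit that the factorization $\det p_2\propto q_1q_2q_3$ holds as a polynomial identity in $\tau$ (the hypotheses $\xi\cdot H\neq 0$, $\xi\times H\neq 0$ of Lemma~\ref{lemma char disjoint} are only used there for disjointness, not for the factorization itself), and state once that $p_2$ is real symmetric from \eqref{principal symbol mathematika}, since that is the hinge of your multiplicity argument.
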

	The proof of this lemma is very similar to the explanation given in \cite[Appendix \ A]{metivierhypboundary} except here we have the additional eigenvalue $\lambda_4=0$. Also, here we will not state all the eigenspaces as in \cite{metivierhypboundary}. 
	\begin{proof}
		Let $\dot{U}=(\dot{\rho},\dot{u},\dot{H},\dot{p})$. The eigenvalue equation $A(U,\xi)\dot{U}=\lambda\dot{U}$ reads 
		\begin{align}
		\begin{cases}
		\lambda\dot{\rho}=\rho\dot{u}_{\parallel},\\
		\rho\lambda\dot{u}_{\parallel}=\dot{p}+H_{\perp}\cdot \dot{H}_{\perp},\\
		\lambda\rho\dot{u}_{\perp}=-H_{\parallel} \dot{H}_{\perp},\\
		\lambda\dot{H}_{\perp}=\dot{u}_{\parallel}H_{\perp}-H_{\parallel}\dot{u}_{\perp},
		\\\lambda\dot{H}_{\parallel}=0,
		\\\lambda\dot{p}=\gamma p \dot{u}_{\parallel}.
		\end{cases}
		\end{align}	
		On $\{\dot{\rho}=0,\dot{u}=0,\dot{H}_{\perp}=0,\dot{p}=0\}=\mathbb E_0(\xi)$, $A$ is equal to $\lambda=0$. From now on we work on $\mathbb E_0^{\perp}=\{\dot{H}_{\parallel}=0\}$ which is invariant by $A(p,\xi)$.
		
		Consider $v=H/\sqrt{\rho}$, $\dot{v}=\dot{H}/\sqrt{\rho}$, $\dot{\alpha}=\dot{p}/\rho$, $\alpha=p/\rho$, and $\dot{\sigma}=\dot{\rho}/\rho$. The characteristic system reads: 
		\begin{align}
		\begin{cases}
		\lambda\dot{\sigma}=\dot{u}_{\parallel},\\
		\lambda\dot{u}_{\parallel}=\dot{\alpha}+v_{\perp}\cdot \dot{v}_{\perp},\\
		\lambda\dot{u}_{\perp}=-v_{\parallel}\dot{v}_{\perp},
		\\\lambda\dot{v}_{\perp}=\dot{u}_{\parallel}v_{\perp}-v_{\parallel}\dot{u}_{\perp},\\
		\lambda\dot{\alpha}=\gamma \alpha \dot{u}_{\parallel}.
		\end{cases}
		\end{align}
		Take a basis of $\xi^{\perp}$ such that $v_{\perp}=(b,0)$ and let $a=v_{\parallel}$. In such a basis, the matrix of the system is
		\begin{align}\label{matrix}
		\lambda-\tilde{A}:=\begin{pmatrix}
		\lambda & -1& 0 & 0&0&0&0\\
		0& \lambda& 0&0&-b&0&-1\\
		0&0&\lambda&0&a&0&0\\
		0&0&0&\lambda&0&a&0\\
		0&-b&a&0&\lambda&0&0\\
		0&0&0&a&0&\lambda&0\\
		0&-\gamma \alpha &0&0&0&0&\lambda
		\end{pmatrix}.
		\end{align}
		The characteristic roots satisfy
		\begin{align}
		\lambda(\lambda^2-a^2)((\lambda^2-a^2)(\lambda^2-c^2)-\lambda^2b^2)=0
		\end{align}
		Thus either 
		\begin{align}
		&\lambda=0\ \text{or}\\ 
		&\lambda^2=a^2\ \text{or}\\
		&\lambda^2=c_f^2(\hat{\xi})=\frac{1}{2}(c^2+h^2+\sqrt{(c^2-h^2)^2+4 b^2c^2})\ \text{or}\\
		&\lambda^2=c_s^2(\hat{\xi})=\frac{1}{2}(c^2+h^2-\sqrt{(c^2-h^2)^2+4 b^2c^2}),
		\end{align}
		with $h^2=a^2+b^2=\lvert H\rvert^2/\rho$.
		
		As in Lemma $\ref{lemma char disjoint}$, if we consider $R(X)=(X-a^2)(X-c^2)-b^2X$, $\{R\leq 0\}=[c_s^2(\hat{\xi}),c_f^2(\hat{\xi})]$, and $R(X)\leq 0$ for $X\in[\min(a^2,c^2),\max(a^2,c^2)]$. Thus,
		\begin{align}
		c_f^2(\hat{\xi})\geq\max(a^2,c^2)\geq a^2,
		\end{align}
		\begin{align}
		c_s^2(\hat{\xi})\leq\min(a^2,c^2)\leq a^2.
		\end{align} 
		At the case $v_{\perp}\neq 0$ that is $w=\hat{\xi}\times v\neq 0$: we have the basis such that (\ref{matrix}) holds is smooth in $\xi$. In this basis, $w=(0,b)$, $b=\lvert v_{\perp}\rvert>0$.
		Since $R(c^2)=-b^2 c^2<0$ there holds $c_s^2(\hat{\xi})<c^2<c_f^2(\hat{\xi})$. Suppose that $a\neq 0$. Then $R(a^2)=-a^2 c^2<0$ and $c_s^2(\hat{\xi})<a^2<c_f^2(\hat{\xi})$. Moreover, $c_s^2(\hat{\xi})c_f^2(\hat{\xi})=a^2 c^2$ and $c_s^2(\hat{\xi})>0$. However, when $a=0$, we get $c_s^2(\hat{\xi})=0$, but $c_f^2(\hat{\xi})>c^2>0$. 
		
		When $a\neq 0$, and $b=0$, the eigenvalues of $\tilde{A}$ are $\pm c$ (simple), $0$ (simple), and $\pm h$ (double). Assume that $c^2\neq h^2$. Note that when $b=0$, then $\lvert a\rvert =h$ and 
		\begin{align*}
		\text{when}\ c^2>h^2: c_f(\hat{\xi})=c, \ c_s(\hat{\xi})=h,
		\end{align*}
		\begin{align*}
		\text{when}\ c^2<h^2: c_f(\hat{\xi})=h,\  c_s(\hat{\xi})=c.
		\end{align*}
	\end{proof}
	Let $Q$ be a pseudodifferential operator of order $1$, such that $Q\dot{U}=0$ be the system of the linearized ideal MHD equations, and $q=\sigma(Q)$ be its principal symbol.
	We have $\operatorname{det}q=\tau^2(\tau^2-c_s^2(\hat{\xi})\lvert\xi\rvert^2)(\tau^2-c_f^2(\hat{\xi})\lvert\xi\rvert^2)(\tau^2-(\xi\cdot H)^2/\rho)$. 
	\begin{prop}
		When we have $\Sigma$ is disjoint union of the hypersurfaces $S_1=\{q_1=\tau=0\}$, $S_2=\{q_2=\tau-c_s(\hat{\xi})\lvert\xi\rvert=0\}$, $S_3=\{q_3=\tau+c_s(\hat{\xi})\lvert\xi\rvert=0\}$, $S_4=\{q_4=\tau-(\xi\cdot H)/\sqrt{\rho}=0\}$, $S_5=\{q_5=\tau+(\xi\cdot H)/\sqrt{\rho}=0\}$, $S_6=\{q_6=\tau-c_f(\hat{\xi})\lvert\xi\rvert=0\}$, and  $S_7=\{q_7=\tau+c_f(\hat{\xi})\lvert\xi\rvert=0\}$; that is when we are outside the intersection of any of these hypersurfaces then $Q$ is of real principal type. Note that we have this case when $\xi\cdot H\neq 0$, and $\xi\times H\neq 0$.
	\end{prop}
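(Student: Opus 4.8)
The plan is to verify Definition \ref{def systems of real principal type} directly: for each $\nu_0\in\Sigma$ lying on exactly one of the sheets $S_{m_0}$ (off $\Sigma$ the system $Q$ is elliptic), I will produce an $8\times8$ symbol $\tilde q$, homogeneous of degree $0$, with $\tilde q\,q=q_{m_0}\,\operatorname{Id}_8$ near $\nu_0$, where $q=\sigma(Q)=\tau\operatorname{Id}_8+A(U,\xi)$ and $q_{m_0}$ is the scalar factor of $\det q$ cutting out $S_{m_0}$ (so $q_1=\tau$, $q_2=\tau-c_s(\hat\xi)|\xi|$, $\dots$, $q_7=\tau+c_f(\hat\xi)|\xi|$). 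It then only remains to check that $q_{m_0}$ is a scalar symbol of real principal type in the sense of Definition \ref{def real principal type scalar}.

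First I would read off the spectral picture from Lemma \ref{12}. Under the standing hypotheses $c^2>0$, $0<|H|^2\neq\rho c^2$ and with $\xi\cdot H\neq0$, $\xi\times H\neq0$, part $(i)$ of Lemma \ref{12} gives that $A(U,\xi)$ has the six simple eigenvalues $\lambda_{\pm1},\lambda_{\pm2},\lambda_{\pm3}$, which near $\nu_0$ are pairwise distinct and nonzero (the strict inequalities $0<c_s^2<a^2<c_f^2$ from the proof of Lemma \ref{12}, together with $c_s^2>0$, give $0<|\lambda_{\pm1}|<|\lambda_{\pm2}|<|\lambda_{\pm3}|$), together with the eigenvalue $0$ of algebraic multiplicity $2$; moreover the computation in the proof of Lemma \ref{12} exhibits a $2$-dimensional null space for $A$ (one dimension from $\mathbb E_0(\xi)$, one from $\ker A$ on the complementary invariant subspace $\{\dot H_\parallel=0\}$, where $0$ is a simple root of the $7\times7$ characteristic polynomial), so $0$ is semisimple. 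Consequently $A(U,\xi)$ is diagonalizable with a uniform spectral gap near $\nu_0$, so the Riesz projections $\pi_1,\dots,\pi_7$ onto the eigenspaces are smooth symbols, homogeneous of degree $0$ in $\xi$, with $\operatorname{rank}\pi_1=2$, $\operatorname{rank}\pi_m=1$ for $m\geq2$, $\pi_m\pi_{m'}=\delta_{mm'}\pi_m$, and $\sum_m\pi_m=\operatorname{Id}_8$; and $q=\tau\operatorname{Id}_8+A=\sum_{m=1}^7 q_m\pi_m$. Since $\nu_0\notin S_m$ for $m\neq m_0$, each such $q_m$ is elliptic near $\nu_0$, so
\[
\tilde q:=\pi_{m_0}+\sum_{m\neq m_0}\frac{q_{m_0}}{q_m}\,\pi_m
\]
is a well-defined degree-$0$ symbol near $\nu_0$, and the projection relations give $\tilde q\,q=q_{m_0}\sum_m\pi_m=q_{m_0}\operatorname{Id}_8$, as required.

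Finally I would check that $q_{m_0}$ is of real principal type. Each $q_m$ is real and homogeneous of degree $1$ in $(\tau,\xi)$, and is smooth near $\nu_0$: the radicand $(c^2-h^2)^2+4b^2c^2\geq 4b^2c^2>0$ since $b\neq0$ (as $\xi\times H\neq0$) and $c>0$, so $c_s^2,c_f^2$ are smooth, and being $>0$ so are $c_s,c_f$. Moreover $\partial_\tau q_m\equiv1$, hence $dq_m\neq0$ on $\{q_m=0\}$, the Hamilton field $H_{q_m}$ has a nonzero $\partial_t$-component and is therefore nonvanishing and never proportional to the radial field $\tau\partial_\tau+\xi\cdot\partial_\xi$ (which has no base component), so $q_m$ is of real principal type. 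Thus $Q$ is of real principal type at $\nu_0$, with respect to the Hamilton field $H_{q_{m_0}}$ of the sheet through $\nu_0$. The only delicate point — more a point to be careful about than a real obstacle — is the doubled sheet $S_1=\{\tau=0\}$, where $\det q$ vanishes to second order: a priori this is where real principal type can fail (it is exactly the uniaxial/MHD phenomenon at $\Sigma_2$). What rescues the argument away from the intersections is precisely the semisimplicity of the zero eigenvalue from Lemma \ref{12}$(i)$, which makes $\pi_1$ a smooth rank-$2$ projection and lets us extract the first-order factor $q_1=\tau$ rather than $\tau^2$; everywhere else the eigenvalues are simple and the construction is routine.
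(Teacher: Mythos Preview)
Your proof is correct and proceeds by a genuinely different construction than the paper's. Both arguments verify Definition~\ref{def systems of real principal type} by exhibiting an $8\times8$ symbol $\tilde q$ with $\tilde q\,q=q_{m_0}\operatorname{Id}_8$ near a point of $S_{m_0}$, but the paper builds $\tilde q$ from the adjugate: it observes (via a computer-algebra check) that $^tq^{\operatorname{co}}=\tau M$ for some matrix symbol $M$, and then sets $\tilde q=\bigl(\prod_{i\geq2}q_i\bigr)^{-1}M$ on $\Gamma_1$ and $\tilde q=\bigl(q_1^2\prod_{i\neq j,\,i\geq2}q_i\bigr)^{-1}\,{}^tq^{\operatorname{co}}$ on $\Gamma_j$, $j\geq2$. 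You instead use the spectral decomposition $q=\sum_m q_m\pi_m$ and take $\tilde q=\pi_{m_0}+\sum_{m\neq m_0}(q_{m_0}/q_m)\pi_m$, exactly as the paper does for the $3\times3$ wave operator in Section~\ref{section application transport eq}. What your route buys is conceptual transparency: the one nontrivial point---that the double sheet $S_1$ does not obstruct real principal type---is isolated as the semisimplicity of the zero eigenvalue of $A(U,\xi)$, which you extract cleanly from Lemma~\ref{12}(i). In the paper's argument the very same fact is what forces $^tq^{\operatorname{co}}$ to vanish at $\tau=0$ (since $\operatorname{rank}A=6$ there, all $7\times7$ minors vanish), but this is left to the Mathematica verification rather than argued. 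Conversely, the adjugate approach avoids having to discuss smoothness and homogeneity of Riesz projections, and it produces an explicit global formula for $\tilde q$ that the paper later reuses to verify the structural hypotheses $(\ref{adjugate matrix})$--$(\ref{principal symbol of L1})$ in the MHD-type proposition.
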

	\begin{proof}
		Let $\Gamma_1$,....$\Gamma_7$ be the disjoint conic neighborhoods of $S_1$,...,$S_7$ respectively. 
		
		Let $^t q^{\operatorname{co}}$ be the adjugate matrix (transpose of the cofactor matrix) of $q$. We can check by using "Mathematica" for example that $^t q^{\operatorname{co}}$ can be written as 
		\begin{align}\label{adjugate app}
		^t q^{\operatorname{co}}=\tau M,
		\end{align}
		with $M$ being an $8\times 8$ matrix. 
		
		In $\Gamma_1$, we choose
		\begin{align}
		\tilde{q}=\bigg(1/\prod_{i=2}^{7}q_j\bigg)M,
		\end{align}
		so we get 
		\begin{align}
		\tilde{q}q=\tau \operatorname{Id}_8.
		\end{align}
		In $\Gamma_j$, for $j=2,...,7$ we choose
		\begin{align}
  \tilde{q}=\bigg(1/\bigg(q_1^2\prod_{\substack{i=2\\i\neq j}}^{7}q_i\bigg)\bigg) \ ^tq^{\operatorname{co}},
		\end{align}
		so we get 
		\begin{align}
		\tilde{q}q=q_j \operatorname{Id}_8.
		\end{align}
		As $q_j$ for $j=1,...7$ are of real principal type we get the result.
		
	\end{proof}
	
	Remember that from lemma \ref{12}, we know that $c_f^2(\hat{\xi})\neq c_s^2(\hat{\xi})$ and that $c_f^2(\hat{\xi})\neq 0$.
	\begin{prop}\label{prop uniaxial type application}
		If $\tau\neq 0$, and $\tau^2\neq c_f^2(\hat{\xi})\lvert\xi\rvert^2$, or if $\tau\neq 0$ and $\tau^2\neq c_s^2(\hat{\xi})\lvert\xi\rvert^2$, then our system is of uniaxial type at $\Sigma_2$. 
	\end{prop}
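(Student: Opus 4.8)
The plan is to verify, one by one, the five conditions $(\ref{2.1})$--$(\ref{2.4})$ and $(\ref{2.7})$ defining a system of uniaxial type at a point $\nu_0\in\Sigma_2$. I treat the first hypothesis, $\tau\neq 0$ and $\tau^2\neq c_f^2(\hat\xi)\lvert\xi\rvert^2$; the case $\tau\neq 0$, $\tau^2\neq c_s^2(\hat\xi)\lvert\xi\rvert^2$ is identical after interchanging $c_s$ and $c_f$ and using Lemma~\ref{12}$(iii)$ with $\lvert H\rvert^2>\rho c^2$ instead of $\lvert H\rvert^2<\rho c^2$. First I would pin down the local geometry: combining $\det q=\tau^2(\tau^2-c_s^2(\hat\xi)\lvert\xi\rvert^2)(\tau^2-c_f^2(\hat\xi)\lvert\xi\rvert^2)(\tau^2-(\xi\cdot H)^2/\rho)$ with Lemma~\ref{12}$(iii)$, near such $\nu_0$ --- which lies off $S_1=\{\tau=0\}$ and off $S_6\cup S_7=\{\tau^2=c_f^2(\hat\xi)\lvert\xi\rvert^2\}$ --- the characteristic set reduces to $S_2\cup S_4$ (or $S_3\cup S_5$), where $S_2=\{q_2=\tau-c_s(\hat\xi)\lvert\xi\rvert=0\}$ is the slow magnetosonic sheet and $S_4=\{q_4=\tau-(\xi\cdot H)/\sqrt\rho=0\}$ the Alfv\'en sheet, and $\Sigma_2=S_2\cap S_4$ equals the conic set $\{\xi\parallel H,\ \xi\cdot H>0\}$ (with $\tau$ then determined). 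This last identification uses $c_s^2(\hat\xi)\le(\hat\xi\cdot H)^2/\rho$ from the proof of Lemma~\ref{12}, an equality exactly when $\hat\xi\times H=0$. On $\Sigma_1=\Sigma\setminus\Sigma_2$ one has $d(\det q)\neq 0$, so $P$ is of real principal type there, and $(\ref{2.3})$, $\dim\ker q=2$ on $\Sigma_2$, follows from the double eigenvalue of Lemma~\ref{12}$(iii)$, which is semisimple since the hyperbolic symmetrizer $S$ makes $A(U,\xi)$ diagonalisable.

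Next I would settle $(\ref{2.1})$, $(\ref{2.4})$ and $(\ref{2.2})$. The sheets $S_2,S_4$ are non-radial because $q_2,q_4$ are real of real principal type. For the tangency, write $g_2=c_s(\hat\xi)\lvert\xi\rvert$, $g_4=(\xi\cdot H)/\sqrt\rho$ and Taylor-expand $g_2^2=c_s^2(\hat\xi)\lvert\xi\rvert^2=h^2\lvert\xi\rvert^2-\frac{c^2}{\rho(c^2-h^2)}\lvert\xi\times H\rvert^2+O(\lvert\xi\times H\rvert^4)$, which is legitimate because $c^2\neq h^2$ keeps the radical in $c_s^2$ smooth near $\Sigma_2$; since $\lvert\xi\times H\rvert^2$ vanishes to second order on $\Sigma_2$ in all variables $(x,\xi)$, one gets $d(g_2^2)=d(g_4^2)$ at $\nu_0$, hence $dg_2=dg_4$ and $T_{\nu_0}S_2=T_{\nu_0}S_4$. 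Comparing the $\xi$-Hessians at $\nu_0$ --- that of $g_4^2$ being $2H\otimes H/\rho$, that of $g_2^2$ carrying in addition the non-zero term $\frac{-2h^4}{c^2-h^2}\operatorname{Id}$ --- shows they can never coincide, so the tangency is of exactly order $k_0=1$, giving $(\ref{2.1})$. Then $(\ref{2.4})$ is immediate: near $\nu_0$, $\det q=e\,q_2q_4$ with $e\neq0$, and since $q_2,q_4$ vanish on $\Sigma_2$ with $dq_2\parallel dq_4\neq0$ there, $d^2(\det q)$ is a non-zero multiple of the symmetric square of $dq_2$ on $\Sigma_2$. For $(\ref{2.2})$ I would note that $\Sigma_2$ has codimension $d_0=3$ (the two independent equations cutting $\xi$ down to the line $\mathbb R H$, plus $\tau$ fixed) and verify its involutivity by a direct Poisson-bracket computation using the equilibrium relation $(\ref{condition equilibrium})$.

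The remaining condition $(\ref{2.7})$, $\partial\mathcal N_P^2\cap\partial\mathcal N_P^4=\{0\}$ over $\partial\Sigma_1\setminus(\Sigma_2\times 0)$, is the main obstacle: no polarization vector can be the limit, along one fixed direction, of polarizations on both the slow and the Alfv\'en sheets. The limit polarizations are the directional limits of $\ker q(\nu_k)$, i.e.\ of the eigenspace of $A(U,\xi_k)$ for the eigenvalue $-\tau_k$, and each such limit is a single line inside the two-dimensional $\ker q(\nu_0)$. The plan is to read these lines off from the eigenvectors of the matrix $\tilde A$ in the proof of Lemma~\ref{12}, for $\xi$ close to but not parallel to $H$: the Alfv\'en eigenvector stays polarized along the decoupled ``$\hat e_y$'' variable, transverse to the $\xi$--$H$ plane, whereas the slow magnetosonic eigenvector lives in that plane, so the two limiting directions remain distinct for every $\rho$ having a component transverse to $\Sigma_2$. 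Equivalently --- and this is probably the cleanest route --- after passing to the normal form of Proposition~\ref{prop1} one checks that the relevant entry of $k$ belongs to $S(\vartheta,g)$, which by the argument of Proposition~\ref{prop1 generalized} is precisely $(\ref{2.7})$. I expect this verification, together with the involutivity in $(\ref{2.2})$, to be the only places where the fine algebraic structure of the MHD symbol --- rather than just the shape of its characteristic variety --- genuinely enters; with those in hand, $P$ satisfies all of $(\ref{2.1})$--$(\ref{2.4})$ and $(\ref{2.7})$ microlocally near $\nu_0$ and is therefore of uniaxial type at $\Sigma_2$.
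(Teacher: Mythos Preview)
Your overall strategy is sound and in places more detailed than the paper itself, which simply asserts that $(\ref{2.1})$--$(\ref{2.4})$ hold. Your Taylor expansion of $c_s^2(\hat\xi)\lvert\xi\rvert^2$ near $\xi\parallel H$ is a clean way to see the order-one tangency in $(\ref{2.1})$; the paper just states it. (One minor imprecision: the difference of the $\xi$-Hessians of $g_2^2$ and $g_4^2$ also carries an $H\otimes H$ term, not only the $\operatorname{Id}$ term you wrote, but your conclusion that the Hessians differ is unaffected.) Your treatment of $(\ref{2.2})$, $(\ref{2.3})$ and $(\ref{2.4})$ agrees with the paper.

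The substantive divergence is in the verification of $(\ref{2.7})$. The paper neither computes limit polarizations directly nor passes to a normal form. Instead it invokes a criterion from Dencker's uniaxial paper (Proposition~3.2 of \cite{denckerdoublerefraction}): because the tangency has order exactly $1$, condition $(\ref{2.7})$ is equivalent to $\partial_\rho q:\ker q\to\operatorname{Im} q$ at $\Sigma_2$ for every $\rho\in T_{\Sigma_2}\Sigma$. The paper then lists an explicit spanning set $D_1,\ldots,D_{13}$ of $T_{\Sigma_2}\Sigma$ and checks, using the defining equations of $\ker q$ on $\Sigma_2$, that in fact $D_j q\,\nu=0$ for every $\nu\in\ker q$; so $\partial_\rho q$ sends $\ker q$ to $\{0\}\subset\operatorname{Im} q$. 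This is a concrete linear-algebra computation and is what actually closes the argument.

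Your eigenvector route (Alfv\'en polarization transverse to the $\xi$--$H$ plane, slow magnetosonic polarization inside it) is physically correct and could be made rigorous, but as written it is only a heuristic. Your alternative via the normal form cites Proposition~\ref{prop1 generalized}, which is for systems of \emph{generalized transverse} type; the uniaxial analogue (with the appropriate weight and metric) lives in \cite{denckerdoublerefraction}, not in this paper, so that reference is off. Either of your approaches could be completed, but the paper's criterion-based check is shorter and uses no additional machinery beyond what Dencker already proved.
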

	\begin{proof}
		First case: If $\tau\neq 0$, and $\tau^2\neq c_f^2(\hat{\xi})\lvert\xi\rvert^2$, we have $\Sigma$ is union of two hypersurfaces $S_1=\{\tau-c_s(\hat{\xi})\lvert\xi\rvert=0\}\sqcup\{\tau+c_s(\hat{\xi})\lvert\xi\rvert=0\}$, and $S_2=\{\tau-(\xi\cdot H)/\sqrt{\rho}=0\}\sqcup\{\tau+(\xi\cdot H)/\sqrt{\rho}=0\}$ intersecting at $\Sigma_2=\{\tau=\lvert \xi\rvert\lvert H\rvert/\sqrt{\rho},\  \xi\times H=0, \lvert H\rvert^2<\rho c^2, \xi\neq 0\}\sqcup\{\tau=-\lvert \xi\rvert\lvert H\rvert/\sqrt{\rho},\  \xi\times H=0, \lvert H\rvert^2<\rho c^2, \xi\neq 0\}$.\\
		Second case: If $\tau\neq 0$, and $\tau^2\neq c_s^2(\hat{\xi})\lvert\xi\rvert^2$, we have $\Sigma$ is union of two hypersurfaces  $S_1=\{\tau-c_f(\hat{\xi})\lvert\xi\rvert=0\}\sqcup\{\tau+c_f(\hat{\xi})\lvert\xi\rvert=0\}$ and $S_2=\{\tau-(\xi\cdot H)/\sqrt{\rho}=0\}\sqcup\{\tau+(\xi\cdot H)/\sqrt{\rho}=0\}$ intersecting at $\Sigma_2=\{\tau=\lvert\xi\rvert\lvert H\rvert/\sqrt{\rho}, \xi\times H=0, \lvert H\rvert^2>\rho c^2, \xi\neq0\}\sqcup\{\tau=-\lvert\xi\rvert\lvert H\rvert/\sqrt{\rho}, \xi\times H=0, \lvert H\rvert^2>\rho c^2, \xi\neq0\}$.
		
		In the first and in the second case we have: $S_1$ and $S_2$ are tangent of order $1$ at $\Sigma_2$, 
		the codimension of $\Sigma_2$ is three, the (complex) dimension of $\mathcal N_Q$ is equal to $2$ at $\Sigma_2$, $d^2(\det q)\neq 0$ at $\Sigma_2$, and $d^i(\det q)=0$ at $\Sigma_2$ for $i<2$. Hence, the conditions $(\ref{2.1})$-$(\ref{2.4})$ are satisfied.
		It remains only to prove $(\ref{2.7})$. In \cite{denckerdoublerefraction}, Dencker mentioned that by proposition $3.2$ in \cite{denckerdoublerefraction}, we only have to verify
		\begin{align}
		\partial_\rho q:\ker q\mapsto \operatorname{Im} q\ \text{at}\ \Sigma_2
		\end{align}
		when $\rho\in T_{\Sigma_2}\Sigma$, since the order of tangency of $S_1$ and $S_2$ is $1$. 
		$T_{\Sigma_2}\Sigma$ is characterized as those $\rho\in T_{\Sigma_2}X$ such that $\partial_\rho^2(\operatorname{det}q)=0$. Thus $T_{\Sigma_2}\Sigma$ is spanned by $D_1=\xi_2\partial_{\xi_1}-\xi_1\partial_{\xi_2}$, $D_2=\xi_3\partial_{\xi_1}-\xi_1\partial_{\xi_3}$, $D_3=\xi_2\partial_{\xi_3}-\xi_3\partial_{\xi_2}$, $D_4=H_2\partial_{\xi_1}-H_1\partial_{\xi_2}$,
		$D_5=H_1\partial_{\xi_3}-H_3\partial_{\xi_1}$,
		$D_6=H_2\partial_{\xi_3}-H_3\partial_{\xi_2}$,
		$D_7=\partial_t$, $D_8=\xi_1\tau\partial_{\tau}+\lvert\xi\rvert^2\partial_{\xi_1}$, $D_9=\xi_2\tau\partial_{\tau}+\lvert\xi\rvert^2\partial_{\xi_2}$, $D_{10}=\xi_3\tau\partial_{\tau}+\lvert\xi\rvert^2\partial_{\xi_3}$, $D_{11}=\tau H_1\partial_{\tau}+(\xi\cdot H)\partial_{\xi_1}$, $D_{12}=\tau H_2\partial_{\tau}+(\xi\cdot H)\partial_{\xi_2}$, $D_{13}=\tau H_3\partial_{\tau}+(\xi\cdot H)\partial_{\xi_3}$. We can check that if $^t(\nu_1,...,\nu_8)\in\ker q$ at $\Sigma_2$, then we find 
		\begin{align}\label{Dj for the first and third case}
		D_j q^t(\nu_1,...,\nu_8)=\ ^t(0,...,0),\ \ \ \ j=1,...,13,
		\end{align}
		so, $(\ref{2.7})$ is satisfied. For $D_7$ we clearly have $D_7 q ^t\nu=0$. We will show how one can get $(\ref{Dj for the first and third case})$ for the other $D_j$, in particular we show the proof for $D_1$, and one can prove it for the other $D_j$ in a similar way. Let $\nu\in\ker q$ at $\Sigma_2$ so we have 
		\begin{align}\label{ker p2}
		\begin{cases}
		\pm \frac{\lvert H\rvert\lvert\xi\rvert}{\sqrt{\rho}} \nu_1+\rho \xi_1 \nu_2+\rho \xi_2 \nu_3+\rho \xi_3 \nu_4=0\\
		\pm \frac{\lvert H\rvert\lvert\xi\rvert}{\sqrt{\rho}} \nu_2-\rho^{-1}(H_3\xi_3+H_2\xi_2)\nu_5+\rho^{-1}\xi_1 H_2\nu_6+\rho^{-1}\xi_1H_3 \nu_7+\rho^{-1}\xi_1\nu_8=0\\
		\pm \frac{\lvert H\rvert\lvert\xi\rvert}{\sqrt{\rho}} \nu_3+\rho^{-1}\xi_2 H_1\nu_5-\rho^{-1}(H_3\xi_3+H_1\xi_1)\nu_6+\rho^{-1}\xi_2H_3 \nu_7+\rho^{-1}\xi_2\nu_8=0\\
		\pm \frac{\lvert H\rvert\lvert\xi\rvert}{\sqrt{\rho}} \nu_4+\rho^{-1}\xi_3 H_1\nu_5+\rho^{-1}\xi_3H_2 \nu_6-\rho^{-1}(H_2\xi_2+H_1\xi_1)\nu_7+\rho^{-1}\xi_3\nu_8=0\\
		-(H_2\xi_2+H_3\xi_3)\nu_2+\xi_2H_1\nu_3+\xi_3H_1\nu_4\pm\frac{\lvert H\rvert\lvert\xi\rvert}{\sqrt{\rho}}\nu_5=0\\
		\xi_1H_2\nu_2-(H_1\xi_1+H_3\xi_3)\nu_3+\xi_3H_2\nu_4\pm\frac{\lvert H\rvert\lvert\xi\rvert}{\sqrt{\rho}}\nu_6=0\\
		\xi_1H_3\nu_2+\xi_2H_3\nu_3-(H_1\xi_1+H_2\xi_2)\nu_4\pm\frac{\lvert H\rvert\lvert\xi\rvert}{\sqrt{\rho}}\nu_7=0\\
		\gamma p\xi_1\nu_2+	\gamma p\xi_2\nu_3+	\gamma p\xi_3\nu_4\pm \frac{\lvert H\rvert\lvert\xi\rvert}{\sqrt{\rho}}\nu_8=0.
		\end{cases}	\end{align}
		We have 
		\begin{align}
		\begin{split}
		D_1 q \nu &=D_1\begin{pmatrix}
		\tau \nu_1+\rho \xi_1 \nu_2+\rho \xi_2 \nu_3+\rho \xi_3 \nu_4\\ \tau \nu_2-\rho^{-1}(H_3\xi_3+H_2\xi_2)\nu_5+\rho^{-1}\xi_1 H_2\nu_6+\rho^{-1}\xi_1H_3 \nu_7+\rho^{-1}\xi_1\nu_8\\
		\tau \nu_3+\rho^{-1}\xi_2 H_1\nu_5-\rho^{-1}(H_3\xi_3+H_1\xi_1)\nu_6+\rho^{-1}\xi_2H_3 \nu_7+\rho^{-1}\xi_2\nu_8\\
		\tau \nu_4+\rho^{-1}\xi_3 H_1\nu_5+\rho^{-1}\xi_3H_2 \nu_6-\rho^{-1}(H_2\xi_2+H_1\xi_1)\nu_7+\rho^{-1}\xi_3\nu_8\\
		-(H_2\xi_2+H_3\xi_3)\nu_2+\xi_2H_1\nu_3+\xi_3H_1\nu_4+\tau\nu_5\\
		\xi_1H_2\nu_2-(H_1\xi_1+H_3\xi_3)\nu_3+\xi_3H_2\nu_4+\tau\nu_6\\
		\xi_1H_3\nu_2+\xi_2H_3\nu_3-(H_1\xi_1+H_2\xi_2)\nu_4+\tau\nu_7\\
		\gamma p\xi_1\nu_2+	\gamma p\xi_2\nu_3+	\gamma p\xi_3\nu_4+\tau\nu_8
		\end{pmatrix}\\
		&=D_1\begin{pmatrix}
		\tau \nu_1\mp \frac{\lvert H\rvert\lvert\xi\rvert}{\sqrt{\rho}} \nu_1\\ \tau \nu_2\mp \frac{\lvert H\rvert\lvert\xi\rvert}{\sqrt{\rho}} \nu_2\\
		\tau \nu_3\mp \frac{\lvert H\rvert\lvert\xi\rvert}{\sqrt{\rho}} \nu_3\\
		\tau \nu_4\mp \frac{\lvert H\rvert\lvert\xi\rvert}{\sqrt{\rho}} \nu_4\\
		\tau\nu_5\mp \frac{\lvert H\rvert\lvert\xi\rvert}{\sqrt{\rho}} \nu_5\\
		\tau\nu_6\mp \frac{\lvert H\rvert\lvert\xi\rvert}{\sqrt{\rho}} \nu_6\\
		\tau\nu_7\mp \frac{\lvert H\rvert\lvert\xi\rvert}{\sqrt{\rho}} \nu_7\\
		\tau\nu_8\mp \frac{\lvert H\rvert\lvert\xi\rvert}{\sqrt{\rho}} \nu_8
		\end{pmatrix}= \ ^t(0,...,0),
		\end{split}
		\end{align}
		for $\nu\in\ker q$ at $\Sigma_2$.	
	\end{proof}
	\begin{prop}
		If $\tau^2\neq c_f^2(\hat{\xi})\lvert\xi\rvert^2$ then  our system is of MHD type at $\Sigma_2$.
	\end{prop}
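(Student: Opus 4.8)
The plan is to check the defining conditions $(\ref{sigma and sigma2})$-$(\ref{principal symbol of L1})$ and $(\ref{2.7 double refraction})$ of a system of MHD type directly, for $N=8$, $m=1$, and -- as the first step shows -- $r_0=5$. In the conic region $\{\tau^2\neq c_f^2(\hat\xi)\lvert\xi\rvert^2\}$ the fast factor of $\det q$ is invertible, so $\Sigma$ is cut out by $\tau^2\,(\tau^2-c_s^2(\hat\xi)\lvert\xi\rvert^2)\,(\tau^2-(\xi\cdot H)^2/\rho)=0$. The slow sheet is not smooth as it stands but splits near the crossing locus: the product of the two $\tau^2$-roots of the quartic $(\ref{8})$ yields $c_s^2(\hat\xi)\lvert\xi\rvert^2=\dfrac{c^2}{\rho\,c_f^2(\hat\xi)}(\xi\cdot H)^2=\kappa_0(\hat\xi)^2(\xi\cdot H)^2$, with $\kappa_0$ smooth, positive and homogeneous of degree $0$ near $\Sigma_2$ (this uses $0<\lvert H\rvert^2\neq\rho c^2$). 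Hence near $\Sigma_2:=\{\tau=0,\ \xi\cdot H=0\}$ we have $\Sigma=S_1\cup\dots\cup S_5$ with $S_1=\{\tau=0\}$ (the sheet carrying the permanent double eigenvalue $\lambda_0=\lambda_4=0$), $S_{2,3}=\{\tau=\pm\kappa_0(\hat\xi)\,\xi\cdot H\}$ and $S_{4,5}=\{\tau=\pm(\xi\cdot H)/\sqrt\rho\}$ -- five smooth non-radial hypersurfaces, pairwise distinct there since $\kappa_0\notin\{0,\rho^{-1/2}\}$ near $\Sigma_2$, with $\bigcap_{j=1}^5 S_j=\Sigma_2$. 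So $r_0=5$.

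Next one verifies the remaining structural conditions. Since $H$, $\rho$, $p$ depend on $x$ only, $\{\tau,\xi\cdot H\}=0$, so $\Sigma_2$ is involutive of codimension $d_0=2$; the conormals $d\tau$, $d\tau\mp\kappa_0\,d(\xi\cdot H)$, $d\tau\mp\rho^{-1/2}d(\xi\cdot H)$ of the $S_j$ at $\Sigma_2$ are five distinct lines in the plane spanned by $d\tau$ and $d(\xi\cdot H)$, so the $S_j$ intersect transversally; this gives $(\ref{sigma and sigma2})$ and $(\ref{sigma2 manifold})$. By case (ii) of Lemma $\ref{12}$ the eigenvalue $0$ of $A(U,\xi)$ has algebraic multiplicity $6$ on $\{\xi\cdot H=0\}$, and since $(\ref{2})$ is symmetric hyperbolic this eigenvalue is semisimple, so $\dim\mathcal N_Q=\dim\ker A(\xi)=6=r_0+1$ on $\Sigma_2$, which is $(\ref{dim fiber Np at sigma2})$. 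Writing $\sigma=\xi\cdot H$, near $\Sigma_2$ one has $\det q=\tau^2(\tau^2-\kappa_0^2\sigma^2)(\tau^2-c_f^2(\hat\xi)\lvert\xi\rvert^2)(\tau^2-\sigma^2/\rho)$ with the third factor bounded away from $0$; its lowest-order part in $(\tau,\sigma)$ is a nonzero homogeneous polynomial of degree $6$, so $d^j(\det q)=0$ for $j\le5$ and $d^6(\det q)\neq0$ at $\Sigma_2$, giving $(\ref{detp at sigm2})$; on $S_1\setminus\Sigma_2$ one has $\det q=\tau^2\cdot(\text{nonvanishing})$, so $d(\det q)=0$ and $d^2(\det q)\neq0$ there, while on $S_j\setminus\Sigma_2$, $j\neq1$, exactly one simple factor vanishes, giving $(\ref{detp at Si0})$ and $(\ref{detp at Sj})$ with $i_0=1$.

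For the adjugate decomposition, note that $\lambda_0=\lambda_4=0$ are eigenvalues of $A(U,\xi)$ at every $(x,\xi)$, so $\operatorname{rank}A\le6$ and $({}^{t}q^{\operatorname{co}})|_{\tau=0}={}^{t}A^{\operatorname{co}}=0$; thus ${}^{t}q^{\operatorname{co}}=\tau M$ -- this is $(\ref{adjugate app})$ -- and at the operator level one takes $R=D_t$ (symbol $\tau$, vanishing to first order on $S_1=S_{i_0}$), $L_1=\operatorname{Op}(M)$ of order $m(N-2)=6$, and $L_2={}^{t}Q^{\operatorname{co}}-D_tL_1$ of order $m(N-3)=5$ after absorbing the subprincipal terms, which is $(\ref{adjugate matrix})$. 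Finally ${}^{t}q^{\operatorname{co}}q=(\det q)\operatorname{Id}_8$ gives $\sigma(L_1)q=Mq=(\det q/\tau)\operatorname{Id}_8=f\operatorname{Id}_8$ with $f=\tau(\tau^2-c_s^2(\hat\xi)\lvert\xi\rvert^2)(\tau^2-c_f^2(\hat\xi)\lvert\xi\rvert^2)(\tau^2-(\xi\cdot H)^2/\rho)$, and $\{f=0\}=\{\det q=0\}=\Sigma$ with $df\neq0$ on each $S_i\setminus\Sigma_2$ (simple vanishing), which is $(\ref{principal symbol of L1})$.

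The main obstacle is the limit polarization condition $(\ref{2.7 double refraction})$, namely $\bigcap_{j=1}^5\partial\mathcal N_Q^j=\{0\}$ over $\partial\Sigma_1\setminus(\Sigma_2\times0)$. I would establish it by computing $\ker q$ along each sheet near $\Sigma_2$ -- following the template of the proof of Proposition $\ref{prop uniaxial type application}$, i.e.\ solving the eigenvector system of Lemma $\ref{12}$ on $S_j$ and letting $\xi\cdot H\to0$ -- and observing that the limit polarizations along the slow sheets $S_2,S_3$ lie in the span of $\xi$ and $H$, those along the Alfvén sheets $S_4,S_5$ lie along $\hat\xi\times H$, and the two-dimensional limit along $S_1$ lies in the complementary (entropy and parallel-field) directions, so that no nonzero vector is common to all five. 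Alternatively one may pass to the normal form of Proposition $\ref{prop1 MHD}$ and invoke Proposition $\ref{MHD vs weyl symbol}$, which reduces $(\ref{2.7 double refraction})$ to the assertion that the reduced principal symbol $k$ belongs to $\mathcal C^\infty(\mathbb R, S(\vartheta,g))$. With all of $(\ref{sigma and sigma2})$-$(\ref{principal symbol of L1})$ and $(\ref{2.7 double refraction})$ verified, $Q$ is of MHD type at $\Sigma_2$.
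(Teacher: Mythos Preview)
Your verification of the structural conditions $(\ref{sigma and sigma2})$--$(\ref{principal symbol of L1})$ is correct and in places sharper than the paper's. Two points are genuinely nicer: your factorization $c_s^2(\hat\xi)\lvert\xi\rvert^2=\kappa_0(\hat\xi)^2(\xi\cdot H)^2$ with $\kappa_0^2=c^2/(\rho c_f^2(\hat\xi))$ makes the smoothness of the slow sheets near $\Sigma_2$ explicit (the paper simply writes $S_{2,3}=\{\tau\mp c_s(\hat\xi)\lvert\xi\rvert=0\}$ without comment), and your rank argument for ${}^{t}q^{\operatorname{co}}|_{\tau=0}=0$ --- every $7\times7$ minor of the rank-$\le6$ matrix $A(U,\xi)$ vanishes --- is a clean conceptual replacement for the paper's ``check with Mathematica'' at $(\ref{adjugate app})$.

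The substantive divergence is in $(\ref{2.7 double refraction})$. The paper does \emph{not} compute limit polarizations directly; it reuses the criterion from the uniaxial case, namely that $\partial_\rho q:\ker q\to\operatorname{Im}q$ at $\Sigma_2$ for all $\rho\in T_{\Sigma_2}\Sigma$, and verifies this by listing a spanning set $D_1,\dots,D_{14}$ of $T_{\Sigma_2}\Sigma$ and checking $D_iq\,{}^t(\nu_1,\dots,\nu_8)=0$ for $\nu\in\ker q|_{\Sigma_2}$. Your phrase ``following the template of the proof of Proposition~$\ref{prop uniaxial type application}$'' therefore mis-describes that template: that proof too uses the derivative criterion, not eigenvector limits. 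Your proposed direct route --- computing $\ker q$ on each $S_j$ and showing the limits occupy complementary subspaces (entropy/parallel-field on $S_1$, $\operatorname{span}(\xi,H)$-type on the slow sheets, $\xi\times H$-type on the Alfv\'en sheets) --- is valid in principle and your identification of the limiting directions is correct, but as written it is a sketch rather than a verification; you would still need to exhibit the eigenvectors and check the intersection explicitly. Your alternative via Proposition~$\ref{MHD vs weyl symbol}$ also works, but note the apparent circularity: the normal form of Proposition~$\ref{prop1 MHD}$ is stated for systems already of MHD type. This is harmless because the \emph{proof} of that normal form uses only $(\ref{sigma and sigma2})$--$(\ref{detp at Sj})$ and not $(\ref{2.7 double refraction})$, but you should say so.
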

	\begin{proof}
		When $\tau^2\neq c_f^2(\hat{\xi})\lvert\xi\rvert^2$, then $\Sigma$ is union of the five hypersurfaces  $S_1=\{\tau=0\}$, $S_2=\{\tau-c_s(\hat{\xi})\lvert\xi\rvert=0\}$, $S_3=\{\tau+c_s(\hat{\xi})\lvert\xi\rvert=0\}$, $S_4=\{\tau-(\xi\cdot H)/\sqrt{\rho}=0\}$, and  $S_5=\{\tau+(\xi\cdot H)/\sqrt{\rho}=0\}$, intersecting at $\Sigma_2=\cap_{j=1}^5 S_j=\{\tau=0,\xi\cdot H=0, \xi\neq 0\}$. We want to prove that our system is of MHD type at $\Sigma_2$. We have $S_j$ intersect transversally at $\Sigma_2$, the codimension of $\Sigma_2$ is equal to two, $d^6(\det q)\neq 0$, and $d^i(\det q)=0$ for $i<6$ at $\Sigma_2$, and $\dim$ of the fiber of $\mathcal N_Q$ is equal to $6$ at $\Sigma_2$. $(\ref{detp at Si0})$ is satisfied for $i_0=1$.  Hence, still we want to check $(\ref{2.7 double refraction})$. Again, we will prove this by proving the following  
	\begin{align}
	\partial_\rho q: \ker q \mapsto \operatorname{Im} q\ \text{at}\ \Sigma_2,
	\end{align}
	when $\rho\in T_{\Sigma_2}\Sigma$.  $T_{\Sigma_2}\Sigma$ is spanned by $D_1=\xi_1\partial_{\xi_{1}}+\xi_2\partial_{\xi_{2}}+\xi_3\partial_{\xi_{3}}$, $D_2=\tau\partial_{\tau}$, $D_3=\partial_t$, $D_4=\tau\partial_{\xi_1}$, $D_5=\tau\partial_{\xi_2}$, $D_6=\tau\partial_{\xi_3}$, $D_7=\tau\partial_{x_1}$, $D_8=\tau\partial_{x_2}$, $D_9=\tau\partial_{x_3}$, $D_{10}=(\xi\cdot H)\partial_{x_1}$, $D_{11}=(\xi\cdot H)\partial_{x_2}$, $D_{12}=(\xi\cdot H)\partial_{x_3}$, $D_{13}=(\xi\cdot H)\partial_{\xi_1}$, and $D_{14}=(\xi\cdot H)\partial_{\xi_3}$ (note that we have not mentioned $(\xi\cdot H)\partial_{\xi_2}$ as it can be written in terms of $D_{13}$, $D_{14}$, and $D_{5}$). We have for $(\nu_1,...,\nu_8)\in\ker q$ at $\Sigma_2$
	\begin{align}
	D_i q\ ^t(\nu_1,...,\nu_8)= \ ^t(0,...,0) \ \text{at}\ \Sigma_2\ \text{for}\ i=1,...,14.
	\end{align}
	From $(\ref{adjugate app})$ we know that $(\ref{adjugate matrix})$, and $(\ref{principal symbol of L1})$ are satisfied with $R=D_t$, $\sigma(L_1)=M$, and $f=\tau(\tau^2-c_s^2(\hat{\xi})\lvert\xi\rvert^2)(\tau^2-c_f^2(\hat{\xi})\lvert\xi\rvert^2)(\tau^2-(\xi\cdot H)^2/\rho)$.
 \end{proof}
	\begin{remark}
		$\bullet$ When $\tau^2\neq c_f^2(\hat{\xi})\lvert\xi\rvert^2$, on $\Sigma\setminus \Sigma_2$ we have either $\tau\neq 0$ and for this case we have Proposition $\ref{prop uniaxial type application}$, or $\tau=0$ but $\xi\cdot H\neq 0$ so our system is of real principal type in this case.
		\\ $\bullet$ If $\tau^2\neq c_s^2(\hat{\xi})\lvert\xi\rvert^2$, but $\tau=0$ then we know that $\tau^2\neq c_f^2(\hat{\xi})\lvert\xi\rvert^2$ and again our system is of real principal type in this case.
	\end{remark}
	Note: As an application for systems of generalized transverse type, one can consider the linearized isentropic MHD equations, which is $7\times 7$ matrix; check \cite[Appendic A]{metivierhypboundary} where the first order term of the linearized isentropic MHD equations and its eigenvalues are given, and then we can easily check the type of the system as we did in this section for linearized ideal MHD equations.
 \nocite{*}
	\bibliography{reference}{}
	\bibliographystyle{amsplain}
	
\end{document}